\definecolor{dullmagenta}{rgb}{0.4,0,0.4}   
\definecolor{darkblue}{rgb}{0,0,0.4}
\newtheorem{theorem}{Theorem}[section]
\newtheorem{proposition}[theorem]{Proposition}
\newtheorem{lemma}[theorem]{Lemma}
\newtheorem{cor}[theorem]{Corollary}
\newtheorem{definition}[theorem]{Definition}
\newtheorem{remark}[theorem]{Remark}
\newtheorem{exemple}[theorem]{Example}
\def\operatorname#1{\mathop{\operator@font #1}\nolimits}%
\newcommand{\R}{\mathbb{R}}
\newcommand{\C}{\mathbb{C}}
\newcommand{\Q}{\mathbb{Q}}
\newcommand{\Z}{\mathbb{Z}}
\newcommand{\half}{{\frac{1}{2}}}
\newcommand{\thalf}{{\tfrac{1}{2}}}
\newcommand{\End}{\operatorname{End}}
\newcommand{\im}{\operatorname{Im}}
\newcommand{\Spec}{\operatorname{Spec}}
\newcommand{\Ham}{\mathcal{H}_{\textrm{MB}}}
\newcommand{\Hstd}{\mathcal{H}_{\textrm{std}}}
\newcommand{\Hs}{\mathcal{H}}
\newcommand{\J}{\mathcal{J}}
\newcommand{\Lie}{\mathcal{L}}
\newcommand{\M}{\operatorname{Max}}
\newcommand{\m}{\operatorname{min}}
\newcommand{\Per}{\mathcal{P}}
\newcommand{\A}{\mathcal{A}}
\newcommand{\Mod}{\mathcal{M}}
\newcommand{\w}{\omega}
\newcommand{\abs}[1]{\left| #1 \right|}
\newcommand{\norm}[1]{\left\| #1 \right\|}
\newenvironment{proof}[1][{}]{{ \textsc{Proof{#1}:~}}}{{\hfill$\square$\\ }}
\newcommand*{\rom}[1]{\expandafter\@slowromancap\romannumeral #1@}
\def\adots{\mathinner{\mkern2mu\raise 1pt\hbox{.}\mkern 3mu\raise
4pt\hbox{.}\mkern2mu\raise 8pt\hbox{{.}}}}
\title{The positive equivariant symplectic homology as an  invariant for some contact manifolds}
\author{Jean Gutt}
\date{ }                                           
\begin{document}
\maketitle

\begin{abstract}
We show that positive $S^1$-equivariant symplectic homology is a contact invariant for a subclass of contact manifolds which are boundaries of Liouville domains.
In nice cases, when the set of Conley-Zehnder indices of all good periodic Reeb orbits on the boundary of the Liouville domain is lacunary, the positive $S^1$-equivariant symplectic homology can be computed;  it is generated by those orbits.
We prove a ``Viterbo functoriality'' property: when one  Liouville domain  is embedded into an other one, there is a morphism (reversing arrows)
between their positive $S^1$-equivariant symplectic homologies and morphisms compose nicely.

These properties allow us to give a proof of Ustilovsky's result  on the number of non isomorphic contact structures on the spheres $S^{4m+1}$.
They also give a new proof of a Theorem by Ekeland and Lasry on the minimal number of periodic Reeb orbits on some hypersurfaces in $\R^{2n}$.
We extend this result to some hypersurfaces in some negative line bundles.
\end{abstract}

\section{Introduction}
A \emph{contact structure} on a manifold $M$ of dimension $2n-1$ is a hyperplane field $\xi$ which is maximally non integrable;
i.e. writing locally $\xi = \ker \alpha$, one has 
$\alpha \wedge (d\alpha)^{n-1} \neq 0$ everywhere. A first natural question is to determine
{\textbf{ how many non-diffeomorphic contact structures  there are on a given manifold and in particular on the sphere $S^{2n-1}$}}.
In this paper we consider only coorientable contact structures, i.e. such that $\alpha$ can be  globally defined; $\alpha$ is then called a {\emph{contact form}};
this form is not unique; for any function $f:M\rightarrow\R$, the $1$-form $e^f\alpha$ defines the same contact structure.
The \emph{Reeb vector field} $R_{\alpha}$ associated to a contact form $\alpha$ is the unique vector field on $M$ such that
$\iota(R_{\alpha})d\alpha = 0$ and $\alpha (R_{\alpha}) = 1$.
Since this vector field does not vanish anywhere, there are no fixed points of its flow.
Periodic orbits are thus the most noticeable objects in the flow.
If $(M,\xi)$ is a compact contact manifold, can one say something about \textbf{the minimal number of geometrically distinct periodic Reeb orbits for any contact form $\alpha$
(eventually in a subclass) defining the contact structure $\xi$?}
A way to tackle those two questions is to find an invariant of the contact structure built out of the periodic Reeb orbits.
To build such an invariant  is the aim of contact homology.
At the time of this writing,  contact homology is still in development and encounters ``transversality'' problems.
Instead we consider positive $S^1$-equivariant symplectic homology which is built from periodic orbits of Hamiltonian vector fields in a symplectic manifold whose boundary is the given contact manifold.
In this spirit,  Bourgeois and Oancea, in \cite{bo}, relate, in the case where it can be defined, the linearised contact homology of the boundary to the positive $S^1$-equivariant symplectic homology of the symplectic manifold.

The goal of this paper is to prove that positive $S^1$-equivariant symplectic homology is a contact invariant for a subclass of contact manifolds and that this allows
to obtain results about the two questions mentioned earlier.

In Section \ref{chap:background}, we recall the definition of positive $S^1$-equivariant symplectic homology, first describing symplectic homology $SH$, 
positive symplectic homology $SH^+$, and $S^1$-equivariant symplectic homology $SH^{S^1}$.
We show in Section \ref{chap:SHS1orbits}  that, in nice cases,  generators of the positive $S^1$-equivariant symplectic homology $SH^{S^1,+}$
 are given by good periodic Reeb orbits. 
This relies heavily on earlier results from Bourgeois and Oancea \cite{BOduke} and recent results from Zhao \cite{Zhao}. Precisely, we prove 
\begin{theorem}\label{thm:computing}
	Let $(W,\lambda)$ be a Liouville domain.
	Assume there exists a contact form $\alpha$ on the boundary $\partial W$ such that the set of Conley-Zehnder indices of all good periodic Reeb orbits is lacunary\footnote{A set of integer numbers is
	\emph{lacunary} if it does not contain two consecutive numbers}.
	Then
	$$
		SH^{S^1,+}(W,\Q) = \bigoplus_{\gamma\in\Per(R_{\alpha})} \Q\langle\gamma\rangle
	$$
	where $\Per(R_{\alpha})$ denotes the set of good periodic Reeb orbits on $\partial W$.
\end{theorem}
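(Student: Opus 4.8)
The plan is to extract from the cited literature a chain complex that computes $SH^{S^1,+}(W,\Q)$ and whose underlying graded $\Q$-vector space is already $\bigoplus_{\gamma\in\Per(R_\alpha)}\Q\langle\gamma\rangle$, and then to check that the lacunary hypothesis forces its differential to vanish identically, so that homology and chains coincide.

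First I would set up this chain model, following Bourgeois--Oancea \cite{BOduke} together with the analytic input of Zhao \cite{Zhao}. One computes $SH^{S^1,+}(W,\Q)$ as a direct limit, over a cofinal family of admissible Hamiltonians on the completion of $W$ which are $C^2$-small Morse in the interior and functions of the radial coordinate with slope tending to $+\infty$ on the cylindrical end, of the homology of the subcomplex generated by the $1$-periodic orbits of action above the interior threshold. In the Morse--Bott description attached to $\alpha$ (so that no non-degeneracy assumption on $\alpha$ is needed) these positive-range orbits are in bijection with the periodic Reeb orbits of $\alpha$; passing to the $S^1$-equivariant theory over $\Q$ collapses the two generators that a Reeb orbit carries in the non-equivariant picture to a single generator for each \emph{good} orbit, while the generator of a \emph{bad} orbit becomes acyclic and disappears in homology. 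Thus, as a graded $\Q$-vector space, the complex is $\bigoplus_{\gamma\in\Per(R_\alpha)}\Q\langle\gamma\rangle$, with $\gamma$ sitting in degree $\mu_{\mathrm{CZ}}(\gamma)+c$ for a constant $c$ depending only on $\dim W$ and the chosen normalisations, and its differential changes the degree by exactly one.

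Then comes the short argument that uses the hypothesis. If $\partial\langle\gamma\rangle$ had a nonzero $\langle\gamma'\rangle$-component, the degrees of $\gamma$ and $\gamma'$ would differ by one, hence $\mu_{\mathrm{CZ}}(\gamma')=\mu_{\mathrm{CZ}}(\gamma)\pm 1$; but then two consecutive integers would lie in the set of Conley--Zehnder indices of good Reeb orbits, contradicting lacunarity. So $\partial=0$, and the same holds at every finite slope (the indices occurring there form a subset of the same lacunary set), while the continuation maps of the direct system, being filtered by the action, are just the inclusions of the span of the shorter Reeb orbits into the span of the longer ones. Since taking homology commutes with the direct limit, $SH^{S^1,+}(W,\Q)=\bigoplus_{\gamma\in\Per(R_\alpha)}\Q\langle\gamma\rangle$, as claimed.

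The hard part will be the first step: correctly identifying the positive-range $1$-periodic orbits with Reeb orbits, verifying that bad orbits contribute nothing over $\Q$, computing the grading in terms of $\mu_{\mathrm{CZ}}$, and controlling the Morse--Bott perturbations and the direct limit over slopes. This is precisely the machinery furnished by \cite{BOduke} and \cite{Zhao}; once it is in place, all that remains is the elementary observation that a degree-one differential has nowhere to go on a complex supported in a lacunary range of degrees.
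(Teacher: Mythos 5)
Your closing step is exactly the paper's: a degree-one differential cannot join two generators whose Conley--Zehnder indices lie in a lacunary set, and continuation maps respect the action filtration, so the direct limit is the span of the good orbits. The genuine gap is in your first step, which you defer wholesale to \cite{BOduke} and \cite{Zhao}: those references do \emph{not} furnish a chain complex computing $SH^{S^1,+}(W,\Q)$ whose underlying graded vector space is already $\bigoplus_{\gamma}\Q\langle\gamma\rangle$. What they furnish is much less: the identification of the equivariant complex, for an $S^1$-lift of a perturbed Morse--Bott Hamiltonian $H_{\delta}$, with $\Q[u]/u^{N+1}\otimes SC^{+}_*(H_{\delta})$ --- so each Reeb orbit contributes a whole tower of generators $u^j\otimes\gamma_{\m}$, $u^j\otimes\gamma_{\M}$, $0\le j\le N$ --- together with the computation of only the two lowest pieces $\varphi_0$ and $\varphi_1$ of the differential $\widehat{\partial}^{S^1}=\varphi_0+u^{-1}\varphi_1+\cdots$ \emph{within} a single tower. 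The higher terms $\varphi_j$, $j\ge 2$, and all components of the $\varphi_j$ joining distinct Reeb orbits are not computed, so no chain-level ``collapse'' to one generator per good orbit is available; producing such a small model in this generality is essentially the contact-homology-type statement whose transversality problems the paper is deliberately avoiding.

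What the paper does instead is to filter $\Q[u]/u^{N+1}\otimes SC^{+}(H_{\delta})$ by the action (the period of the underlying Reeb orbit), use an energy-gap estimate (Remarks \ref{rem:action} and \ref{rmk:energy}) to show that on the $E^0$-page the towers of distinct orbits of the same period do not interact, and then compute each tower with $\varphi_0,\varphi_1$: a good orbit contributes one copy of $\Q$ in degree $-\mu_{CZ}(\gamma)$ \emph{plus a spurious class in degree $-\mu_{CZ}(\gamma)+2N$}, while a bad orbit contributes $0$ over $\Q$. Lacunarity is then used \emph{inside} this argument, to kill the differentials on the $E^1$-page --- precisely the cross-tower components your proposed chain model would have had to control --- and only afterwards does the double direct limit (over Hamiltonians, using that continuation morphisms respect the filtration, and over $N$, where the map $E^{1;N}\rightarrow E^{1;N+1}$ is the identity on the classes $\gamma_{\M}$ and zero on the classes $u^N\otimes\gamma_{\m}$) produce $\bigoplus_{\gamma}\Q\langle\gamma\rangle$. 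So the logical order is the reverse of your plan: there is no prior complex with only the good orbits as generators; the lacunary hypothesis is what allows the passage from the genuinely available, much larger, complex to that answer, and your argument as written assumes the hard part in a form stronger than the literature provides.
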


In Section \ref{chapter:properties}, we show that positive $S^1$-equivariant symplectic homology has good functorial properties.
In the first part, we construct a ``transfer morphism'' between all the above mentioned variants of symplectic homology when one  Liouville domain is embedded  into an other Liouville domain.
This construction generalises a construction given by Viterbo (\cite{V}).
We prove in theorem \ref{thm:compos} that this morphism has nice composition properties.
The second part of Section \ref{chapter:properties} is dedicated to the invariance of the  variants of symplectic homology.
This is not new, but a complete and self-contained proof is difficult to find in the literature.
We prove :
\begin{theorem}\label{thm:invLiouville}
	Let $(W_0,\lambda_0)$ and $(W_1,\lambda_1)$ be two Liouville manifolds\footnote{We refer to Definition \ref{def:liouvillemanifold} for a precise definition of Liouville manifold.}
	of finite type such that there exists a symplectomorphism $f:(W_0,\lambda_0)\rightarrow(W_1,\lambda_1)$.
	Then
	\[
		SH^\dagger(W_0,\lambda_0)\cong SH^\dagger(W_1,\lambda_1).
	\]
	where $\dagger$ can be any of the following symbol: $\emptyset, +, S^1, (S^1,+)$.
\end{theorem}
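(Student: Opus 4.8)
The plan is to reduce the statement, for each decoration $\dagger$, to two facts that are built into the construction of $SH^\dagger$ in Section~\ref{chap:background}: first, that for a fixed finite-type Liouville manifold the groups $SH^\dagger(W,\lambda)$ do not depend on the auxiliary data --- the cofinal family of admissible Hamiltonians, the almost complex structures, and (in the $S^1$-equivariant cases) the parametrised perturbation data --- the independence being realised by canonical continuation isomorphisms; and second, that this is compatible with pullback under diffeomorphisms. The substantive point is then to pass from an arbitrary symplectomorphism to one respecting the Liouville structures strictly.

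\emph{Step 1 (naturality under strict Liouville isomorphisms).} Suppose $\phi\colon(W_0,\mu_0)\to(W_1,\lambda_1)$ is a diffeomorphism with $\phi^{*}\lambda_1=\mu_0$. Being a diffeomorphism between finite-type manifolds, $\phi$ is proper and carries the cylindrical end of $W_0$ onto that of $W_1$; hence pulling back by $\phi$ turns an admissible Hamiltonian (resp.\ almost complex structure, resp.\ $S^1$-family of perturbation data) on $W_1$ into one on $W_0$, turns a cofinal family into a cofinal family, and --- since $\phi$ is exact --- identifies the Hamiltonian action functionals. It therefore induces isomorphisms of the (action-filtered, $S^1$-equivariant) Floer complexes that intertwine the continuation maps; the $S^1$-action here is rotation of free loops, untouched by $\phi$, and the ``$+$'' truncation is defined by the action filtration. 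Passing to the (equivariant) homologies and to the direct limit gives $SH^\dagger(W_0,\mu_0)\cong SH^\dagger(W_1,\lambda_1)$ for every $\dagger$.

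\emph{Step 2 (reduction to the strict case).} Given $f$, the closed $1$-form $f^{*}\lambda_1-\lambda_0$ is exact (exactness of this form being part of the notion of a symplectomorphism of Liouville manifolds used here; cf.\ Definition~\ref{def:liouvillemanifold}), say $f^{*}\lambda_1-\lambda_0=dh$. Set $\mu_s:=\lambda_0+s\,dh$ for $s\in[0,1]$: each $\mu_s$ is a primitive of the fixed symplectic form $d\lambda_0$, with Liouville vector field $Z_s=Z_0+sX_h$ (where $Z_0$ is that of $\lambda_0$ and $X_h$ the Hamiltonian vector field of $h$), and each $(W_0,\mu_s)$ is again a finite-type Liouville manifold (clear once, in Step~3, $h$ is arranged to have compact support). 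Since $f^{*}\lambda_1=\mu_1$, Step~1 already gives $SH^\dagger(W_0,\mu_1)\cong SH^\dagger(W_1,\lambda_1)$, so it remains to prove $SH^\dagger(W_0,\mu_0)\cong SH^\dagger(W_0,\mu_1)$.

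\emph{Step 3 (invariance along the Liouville homotopy $\mu_s$ --- the main obstacle).} Because all the $\mu_s$ have the same exterior derivative, the Liouville form enters the Floer construction only through (i) the shape of the cylindrical end, which fixes the class of admissible Hamiltonians, and (ii) the action functional; and (ii) is $s$-independent on loops, since $\int_\gamma dh=0$, so the action filtration and the ``$+$'' subcomplexes are unchanged along the homotopy (and the $S^1$-action is anyway unchanged). The only genuine issue is (i): distinct $\mu_s$ induce distinct cylindrical coordinates, so a family cofinal for $\mu_0$ need not be cofinal for $\mu_1$. I would resolve this by reducing to the case where $h$ has compact support: near infinity both $\lambda_0$ and $f^{*}\lambda_1$ are cylindrical Liouville primitives of $d\lambda_0$, and a Moser-type argument on the end of $W_0$ (equivalently, post-composing $f$ with a symplectomorphism of $W_1$ supported near its end) lets one replace $f$ by a symplectomorphism with $f^{*}\lambda_1=\lambda_0$ outside a compact set; then all the $Z_s$ agree there, the $\mu_s$ share one cylindrical end and one symplectic form, a single choice of cofinal Floer data computes $SH^\dagger(W_0,\mu_s)$ for all $s$, and the identity gives the isomorphism. (If one's definition of Liouville isomorphism already forces $h$ to be compactly supported, this reduction is vacuous. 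Alternatively one can avoid it: cover $[0,1]$ by finitely many subintervals so short that, by finiteness of type, a Hamiltonian of $\mu_{s'}$-slope $k$ dominates one of $\mu_s$-slope $k/2$ and conversely for all large $k$ and all $s,s'$ in the subinterval, interleave the two directed systems over each subinterval and compose; or sandwich via the transfer morphisms of Section~\ref{chapter:properties}.) This Step~3 --- making the two cylindrical ends compatible --- is where the real work lies; with the well-definedness and continuation machinery of Section~\ref{chap:background} in hand, Steps~1 and~2 are purely formal, and they carry all four decorations $\dagger$ simultaneously.
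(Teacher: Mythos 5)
Your Step~2 is where the proposal breaks down: you assume that $f^{\star}\lambda_1-\lambda_0$ is exact, asserting that exactness is ``part of the notion of a symplectomorphism of Liouville manifolds used here'' and pointing to Definition~\ref{def:liouvillemanifold}. That definition only defines Liouville manifolds, not morphisms; the theorem is stated for an arbitrary symplectomorphism, i.e.\ $f^{\star}d\lambda_1=d\lambda_0$, so $f^{\star}\lambda_1-\lambda_0$ is merely closed. Since a finite-type Liouville manifold may well have $H^1(W_0;\R)\neq0$ (e.g.\ $T^{\star}S^1$ with $\lambda=p\,dq$, where the fibrewise translation $(q,p)\mapsto(q,p+c)$ pulls $p\,dq$ back to $p\,dq+c\,dq$, closed but not exact), the non-exact case genuinely occurs and your argument never addresses it. The paper treats exactly this point: it first proves invariance under \emph{exact} symplectomorphisms (the analogue of your Steps~1--3, done via interleaved exhaustions, the interpolation $\lambda_0+d(\eta g)$ and the functoriality of transfer morphisms), and then removes the exactness hypothesis by invoking the non-trivial result of Bourgeois--Ekholm--Eliashberg (see also Cieliebak--Eliashberg, Lemma~11.2) that any symplectomorphism between finite-type Liouville manifolds is diffeotopic to an exact one. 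Some input of this kind is indispensable; as written, your proposal proves a strictly weaker statement than the theorem.

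A secondary point concerns Step~3: the ``Moser-type argument on the end'' by which you replace $f$ by a symplectomorphism with $f^{\star}\lambda_1=\lambda_0$ outside a compact set is not a routine local Moser trick; it is essentially the Cieliebak--Eliashberg statement the paper quotes as Proposition~\ref{prop:CE} (producing a diffeomorphism of completions with $f^{\star}\widehat{\lambda_1}-\widehat{\lambda_0}=dg$, $g$ compactly supported), whose proof uses the Liouville flows of both structures. If you cite or reprove that, the remainder of your Step~3 is sound and in fact more direct than the paper's route: with $h$ compactly supported, the symplectic form, the cylindrical end, the class of admissible Hamiltonians and the action functional (hence the positive truncation and the $S^1$-equivariant data) are literally unchanged, so the identity induces the isomorphism for all four decorations. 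By contrast, your parenthetical ``interleaving'' alternative is sketchier than you suggest: a Hamiltonian linear at infinity for one cylindrical end is in general not admissible for the other, so one needs monotone homotopies between the two classes together with a maximum principle valid for both ends --- which is precisely what the paper's transfer-morphism formalism is set up to provide.
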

We prove that the positive $S^1$-equivariant symplectic homology yields an invariant of some contact manifolds in the following sense.
\begin{theorem}\label{prop:invtcontact}
	Let $(M_0,\xi_0)$ and $(M_1,\xi_1)$ be two contact manifolds that are exactly fillable; i.e. there exist Liouville domains $(W_0,\lambda_0)$ and $(W_1,\lambda_1)$ such that
	$\partial W_0 = M_0$, $\xi_0=\ker({\lambda_0}_{|_{M_0}})$, $\partial W_1=M_1$ and $\xi_1=\ker({\lambda_1}_{|_{M_1}})$.
	Assume there exists a contactomorphism $\varphi:(M_0,\xi_0)\rightarrow (M_1,\xi_1)$.
	Assume moreover that $\xi_0$ admits a contact form $\alpha_0$ such that all periodic Reeb orbits are nondegenerate and the set of Conley-Zehnder indices of all good periodic Reeb orbits is lacunary.
	Then
	\[
		SH^{S^1,+}(W_0,\lambda_0)\cong  SH^{S^1,+}(W_1,\lambda_1).
	\]
\end{theorem}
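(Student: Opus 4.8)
The plan is to reduce the statement to Theorem~\ref{thm:computing}, which expresses $SH^{S^1,+}$ of a Liouville domain purely in terms of the good periodic Reeb orbits of a conveniently chosen contact form on its boundary. The point is that, although $SH^{S^1,+}(W_i,\lambda_i)$ a priori depends on the filling and not only on the contact boundary, under the lacunary hypothesis Theorem~\ref{thm:computing} makes that dependence disappear; and the contactomorphism $\varphi$ lets us choose contact forms on $M_0$ and $M_1$ with conjugate Reeb dynamics. So Theorem~\ref{thm:invLiouville} will not be needed here.

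First I would transport the contact form. Set $\alpha_1:=(\varphi^{-1})^*\alpha_0$. Since $\varphi$ is a contactomorphism, $\alpha_1$ is a contact form on $M_1=\partial W_1$ with $\ker\alpha_1=\xi_1=\ker(\lambda_1|_{M_1})$, and $\varphi^*\alpha_1=\alpha_0$, so $\varphi$ maps $R_{\alpha_0}$ to $R_{\alpha_1}$ and conjugates the two Reeb flows. It follows that $\varphi$ induces a bijection $\Per(R_{\alpha_0})\to\Per(R_{\alpha_1})$ respecting periods, covering multiplicities and the \emph{geometrically distinct} relation; that along each orbit $d\varphi$ identifies the linearised return map of $\alpha_0$ with that of $\alpha_1$; and hence that (i) every periodic Reeb orbit of $\alpha_1$ is nondegenerate, (ii) the partition into good and bad orbits is preserved, since it is determined by the linearised return map of the underlying simple orbit together with the covering multiplicity, and (iii) the Conley--Zehnder index of each orbit is preserved, since $d\varphi$ carries a symplectic trivialisation of $(\xi_0,d\alpha_0)$ along $\gamma$ to one of $(\xi_1,d\alpha_1)$ along $\varphi\circ\gamma$. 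In particular the set of Conley--Zehnder indices of good periodic Reeb orbits of $\alpha_1$ coincides with that of $\alpha_0$, hence is lacunary. Note also that $\dim M_0=\dim M_1$, so the two fillings have the same dimension $2n$.

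Now I would apply Theorem~\ref{thm:computing} to $(W_0,\lambda_0)$ with the contact form $\alpha_0$ and to $(W_1,\lambda_1)$ with the contact form $\alpha_1$, whose hypotheses have just been verified. This gives $SH^{S^1,+}(W_0,\lambda_0)=\bigoplus_{\gamma\in\Per(R_{\alpha_0})}\Q\langle\gamma\rangle$ and $SH^{S^1,+}(W_1,\lambda_1)=\bigoplus_{\gamma\in\Per(R_{\alpha_1})}\Q\langle\gamma\rangle$, and the bijection $\Per(R_{\alpha_0})\cong\Per(R_{\alpha_1})$ induced by $\varphi$ produces a $\Q$-linear isomorphism between the two right-hand sides, which is moreover graded because that bijection preserves Conley--Zehnder indices (the grading of each generator being a fixed function of $\mu_{CZ}(\gamma)$ and $n$). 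This is the asserted isomorphism.

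The one genuinely delicate point is (iii): that the Conley--Zehnder grading, as well as the good/bad dichotomy, are truly intrinsic to the germ of the Reeb flow along each closed orbit and carry no hidden dependence on the filling $W_i$ — for instance through the auxiliary trivialisation or capping surface used to define the index when $c_1$ does not vanish, or through the choice of free homotopy class used to organise the generators. Once this is checked and Theorem~\ref{thm:computing} is granted, the remainder of the argument is purely formal bookkeeping.
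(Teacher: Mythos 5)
Your overall strategy is the same as the paper's: transport the contact form by setting $\alpha_1:=(\varphi^{-1})^{\star}\alpha_0$, compute both homologies via Theorem \ref{thm:computing}, and match the generators through the bijection of Reeb orbits induced by $\varphi$. But two of your claims do not hold as stated. First, the assertion that Theorem \ref{thm:invLiouville} ``will not be needed'' hides a real dependence: the proof of Theorem \ref{thm:computing} is carried out with the completion built from $\alpha=\lambda_{\vert_{\partial W}}$, so applying it to $(W_1,\lambda_1)$ with the transported form $\alpha_1\neq{\lambda_1}_{\vert_{M_1}}$ (and to $(W_0,\lambda_0)$ with $\alpha_0$ if $\alpha_0\neq{\lambda_0}_{\vert_{M_0}}$) requires knowing that $SH^{S^1,+}$ of the filling can be computed from a contact form that is not the restriction of the given Liouville form. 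That is exactly the invariance machinery (Lemma \ref{lem:cieliebak}, the lemma producing a Liouville homotopy with prescribed boundary form, and Theorem \ref{prop:invfunct}), packaged in the paper as Lemma \ref{lem:invctc}, which its proof of Theorem \ref{prop:invtcontact} invokes twice. You may of course read the statement of Theorem \ref{thm:computing} literally, but then the invariance is baked into that statement rather than avoided.

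The genuine gap is your step (iii). The grading of the generators is not intrinsic to the germ of the Reeb flow along each orbit: it is defined through trivialisations referencing the filling (capping discs in $\widehat{W_i}$ for contractible orbits, chosen reference loops and framings for the others). The contactomorphism carries the framing used on the $M_0$ side to \emph{some} framing on the $M_1$ side, not necessarily to the one used to grade $SH^{S^1,+}(W_1,\lambda_1)$; two framings along an orbit differ by an integral winding, and the Conley--Zehnder index changes by twice that integer, which may differ from orbit to orbit. Hence what transfers automatically is nondegeneracy, the good/bad dichotomy and the \emph{parity} of the indices, not the indices themselves; in particular lacunarity need not transfer (a lacunary set such as $\{1,4\}$ can become $\{3,4\}$ after an even shift of a single orbit). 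Your argument needs lacunarity of the $\alpha_1$-indices computed with the $W_1$-side conventions in order to apply Theorem \ref{thm:computing} on that side, and needs exact index preservation to get a graded isomorphism — precisely the point you flag at the end and leave unresolved. The paper does not claim this: it records that the indices change by even integers, concludes an isomorphism of graded modules only up to even shifts of the degrees of the generators (see the remark following the theorem), and routes the computation through Lemma \ref{lem:invctc}, whose hypothesis (all indices of the same parity) is stable under such shifts. So the ``delicate point'' you defer is not bookkeeping; it is the step where your proof, as written, is incomplete and your graded conclusion is stronger than what the argument yields.
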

{Note that the isomorphism in Theorem \ref{prop:invtcontact} is between the graded modules.
There can be a shift in the degree of the generators (by even numbers).}
This Theorem, together with Theorem \ref{thm:computing}, allow us to reformulate in section \ref{section:Usti} the proof of Ustilovsky's result on the existence of non diffeomorphic contact structures on the spheres $S^{4m+1}$.
The original proof depends on a theory of cylindrical contact homology, which is not yet rigorously established due to transversality problems.
Another proof of this result using Rabinowitz-Floer homology was done independently by Fauck \cite{fauck}.
\begin{theorem}[\cite{U}]
	For each natural number $m$, there exist infinitely many pairwise non isomorphic contact structures on $S^{4m+1}$.
\end{theorem}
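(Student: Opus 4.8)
The strategy is to feed Ustilovsky's examples into Theorems~\ref{thm:computing} and~\ref{prop:invtcontact}, so that the index computations of \cite{U} become the only geometric input while the role previously played by the (not yet rigorously established) cylindrical contact homology is taken over by the positive $S^1$-equivariant symplectic homology. Fix $m$. For a positive integer $p$ with $p\equiv\pm1\pmod 8$ consider the Brieskorn manifold
\[
	\Sigma_p:=\bigl\{z\in\C^{2m+2}\ :\ z_0^{\,p}+z_1^2+\cdots+z_{2m+1}^2=0\bigr\}\cap S^{4m+3},
\]
a closed manifold of dimension $4m+1$ which, under this congruence, is diffeomorphic to the standard sphere $S^{4m+1}$. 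It carries the contact structure $\xi_p=\ker(\lambda_{\mathrm{std}}|_{\Sigma_p})$ induced by the ambient Liouville form, and it is exactly fillable: $\Sigma_p$ is the contact boundary of a Liouville domain $W_p$, obtained from the Brieskorn variety inside a ball together with the restricted Liouville form. Hence Theorem~\ref{prop:invtcontact} is applicable to the family $(\Sigma_p,\xi_p)$, once its lacunarity hypothesis is checked.

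Next I would recall, essentially quoting \cite{U}, the construction of a good contact form. Ustilovsky perturbs the degenerate contact form coming from the natural $S^1$-action on $\Sigma_p$ to obtain a contact form $\alpha_p$ for $\xi_p$ all of whose periodic Reeb orbits are nondegenerate, and he computes their Conley--Zehnder indices explicitly: these indices all have the same parity, so the set of Conley--Zehnder indices of all good periodic Reeb orbits is lacunary, and for each degree $k$ the number
\[
	c_k(p):=\#\{\text{good periodic Reeb orbits of }\alpha_p\text{ of Conley--Zehnder index }k\}
\]
is a completely explicit function of $k$ and $p$, essentially a count of lattice points in a region whose shape depends on $p$. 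This verifies the hypotheses of both Theorem~\ref{thm:computing} (lacunarity) and Theorem~\ref{prop:invtcontact} (nondegeneracy plus lacunarity) for $(W_p,\lambda_p)$ and $\alpha_p$.

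Theorem~\ref{thm:computing} then gives $SH^{S^1,+}(W_p,\Q)=\bigoplus_{\gamma\in\Per(R_{\alpha_p})}\Q\langle\gamma\rangle$, so that $\dim_\Q SH^{S^1,+}_k(W_p,\Q)=c_k(p)$ for every $k$. Suppose now that $(\Sigma_p,\xi_p)$ and $(\Sigma_{p'},\xi_{p'})$ were contactomorphic; Theorem~\ref{prop:invtcontact} would force an isomorphism of graded $\Q$-modules between $SH^{S^1,+}(W_p,\Q)$ and $SH^{S^1,+}(W_{p'},\Q)$ up to an overall shift by an even integer $2\ell$, i.e.\ $c_k(p')=c_{k+2\ell}(p)$ for all $k$ and a fixed $\ell$. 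From Ustilovsky's formula for $c_k(p)$ one extracts an even-shift-invariant quantity — for instance the asymptotics of $\sum_{k\le N}c_k(p)$ up to lower order terms, or the mean Euler characteristic $\lim_{N\to\infty}\frac1N\sum_{k\le N}(-1)^k c_k(p)$ — which turns out to be a strictly monotone function of $p$; hence, for fixed $p$, only finitely many $p'$ can satisfy $c_\bullet(p')=c_{\bullet+2\ell}(p)$ for some $\ell$. Since $S^{4m+1}\cong\Sigma_p$ for infinitely many admissible $p$, the contact structures $\xi_p$ realise infinitely many pairwise non-contactomorphic contact structures on $S^{4m+1}$.

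The genuinely hard part is the explicit Conley--Zehnder index computation for the perturbed Brieskorn contact form, together with the verification that the resulting graded dimensions yield an invariant that is simultaneously insensitive to even degree shifts and sensitive to $p$; both, however, are precisely the content of \cite{U}, which I would import rather than reprove. What makes the argument complete in the present setting is that Theorems~\ref{thm:computing} and~\ref{prop:invtcontact} allow Ustilovsky's numerical argument to be run with $SH^{S^1,+}$, whose transversality theory is established, in place of cylindrical contact homology.
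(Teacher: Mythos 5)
Your proposal is correct and follows essentially the same route as the paper: Brieskorn spheres $\Sigma(p,2,\ldots,2)$ with Ustilovsky's perturbed contact form, verification that all Conley--Zehnder indices have the same parity so that Theorem~\ref{thm:computing} computes $SH^{S^1,+}$ of the filling from the Reeb orbits, and Theorem~\ref{prop:invtcontact} to conclude non-contactomorphy, with the numerical distinction of the graded groups for different $p$ imported from \cite{U}. Your explicit treatment of the even degree-shift ambiguity (via a shift-invariant quantity such as the mean Euler characteristic) is a slightly more careful phrasing of the step the paper delegates to \cite{U}, but it is the same argument in substance.
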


In Section \ref{chap:orbits} we use positive $S^1$-equivariant symplectic homology to get results about the minimal number of distinct
periodic Reeb orbits on some contact manifolds. We first give in Theorem \ref{ekeland} a new proof (in the non degenerate case) of 
 a Theorem by Ekeland and Lasry stating that  the minimal number of geometrically distinct periodic Reeb orbits on a contact type hypersurfaces $\Sigma$ in $\R^{2n}$
 is at least $n$  when the hypersurface is nicely pinched between two spheres of radius $R_1$ and $R_2$. We then use  
the techniques developed for this proof  to  study some hypersurfaces in some negative line bundles.
We first get a description  for circle bundles.
\begin{proposition}\label{propintro:orb}
	Let $\Sigma$ be a contact type hypersurface in a negative line bundle over a closed symplectic manifold $\mathcal{L}\rightarrow B$ such that the intersection of $\Sigma$ with each fiber is a circle.
	The contact form is the restriction of $r^2\theta^{\nabla}$ where $\theta^\nabla$ is the transgression form on $\mathcal{L}$ and $r$ is the radial coordinate on the fiber.
	Then $\Sigma$ carries at least $\sum_{i=0}^{2n} \beta_{i}$ geometrically distinct periodic Reeb orbits, where the $\beta_{i}$
	are the Betti numbers of $B$.
\end{proposition}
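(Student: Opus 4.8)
The plan is to read off the Reeb dynamics on $\Sigma$ from Hamiltonian dynamics on $B$, dispose of the generic situation by Morse theory, and treat the general case with the invariant $SH^{S^1,+}$ exactly as in the proof of Theorem \ref{ekeland}. First, since $\Sigma$ is of contact type it is transverse to the Liouville vector field of $\lambda=r^2\theta^\nabla$, namely the radial field $\tfrac12 r\,\partial_r$; as $\Sigma$ meets every fibre in a circle, $\Sigma$ is the graph $\{\,r=\rho\,\}$ of a function $\rho\in C^\infty(B,\R_{>0})$, and $\Sigma=\partial W$ with $W=\{\,r\le\rho\,\}$ a Liouville domain with primitive $\lambda$ whose completion is the total space of $\mathcal L$. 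A direct computation of the Reeb vector field $R_\alpha$ of $\alpha:=\lambda|_\Sigma=\rho^2\,\theta^\nabla|_\Sigma$ (using $d\theta^\nabla=-\pi^*\omega_B$) gives $R_\alpha=\rho^{-2}\,\partial_\varphi+X$, where $\partial_\varphi$ generates the fibre rotation and the horizontal part $X$ is the horizontal lift of a pointwise positive multiple of the Hamiltonian vector field $X_\rho$ of $\rho$ on $(B,\omega_B)$. Thus $\pi_*R_\alpha$ is everywhere a positive multiple of $X_\rho$: closed Reeb orbits of $\alpha$ project to closed orbits of the reparametrised Hamiltonian flow of $\rho$, and in particular each critical point $p$ of $\rho$ gives a \emph{simple} closed Reeb orbit $\Sigma_p:=\pi^{-1}(p)\cap\Sigma$, distinct critical points producing geometrically distinct orbits, with $\Sigma_p$ nondegenerate whenever $p$ is. If $\rho$ is Morse, the Morse inequalities over $\Q$ already yield at least $\sum_{i=0}^{2n}\beta_i(B)$ such orbits and we are done.

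For arbitrary $\rho$ one passes to $SH^{S^1,+}$. Multiplying a contact form by a positive function does not change the contact structure, so, after the fibrewise radial diffeomorphism onto $S(\mathcal L)$, $\ker\alpha$ is the standard prequantisation contact structure, \emph{independent of $\rho$}, and $\widehat W$ is, as a completed Liouville manifold, independent of $\rho$ as well; by Theorem \ref{thm:invLiouville} we get $SH^{S^1,+}(W)\cong SH^{S^1,+}(W_0)$, where $W_0$ is the unit disk bundle of $\mathcal L$ carrying $r^2\theta^\nabla$. The Reeb flow of $\theta^\nabla|_{S(\mathcal L)}$ is the fibre $S^1$-action: of Morse--Bott type, the $k$-fold covered fibres forming for each $k\ge 1$ one Morse--Bott family whose space of orbits is $B$ and whose common action is $kT$. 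Feeding this into the Morse--Bott description of positive $S^1$-equivariant symplectic homology (Bourgeois--Oancea, as used to prove Theorem \ref{thm:computing}), and using that these successive families have pairwise distinct actions $T,2T,3T,\dots$ so that the action filtration separates them, one finds that the associated graded of $SH^{S^1,+}(W_0;\Q)$ for the action filtration is $\bigoplus_{k\ge 1}H_*(B;\Q)$, suitably shifted; in particular $SH^{S^1,+}(W;\Q)$ has total rank at least $\sum_{i=0}^{2n}\beta_i(B)$ in each of infinitely many action windows. The lowest window is the cleanest: the Reeb orbits of the standard form of action $\le T$ are exactly the simple fibres, Floer cylinders between orbits of equal action are constant, so the action-$\le T$ part of the complex carries only the Morse differential of a Morse function on $B$, and $SH^{S^1,+,\le T}(W_0;\Q)\cong H_*(B;\Q)$, of rank exactly $\sum_{i=0}^{2n}\beta_i(B)$.

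It remains to convert this into a count of Reeb orbits of $\alpha$ on $\Sigma$, which is done as in the proof of Theorem \ref{ekeland}. Sandwich $\Sigma$ between the genuine circle bundles $\Sigma_\pm=\{r=\max\rho\}$ and $\{r=\min\rho\}$, with Liouville subdomains $W_-\subset W\subset W_+$, and apply the transfer morphisms of Section \ref{chapter:properties} together with Theorem \ref{thm:compos}: the composite $SH^{S^1,+}(W_+)\to SH^{S^1,+}(W)\to SH^{S^1,+}(W_-)$ is the transfer for $W_-\subset W_+$, an isomorphism since $W_\pm$ are rescalings of one disk bundle. The spectral invariants of $SH^{S^1,+}(W)$ attached to a generating family are then pinched between the explicit ones of $W_\pm$, which are multiples of the periods of the standard forms; since each such spectral invariant is realised by a closed Reeb orbit of $\alpha$ (possibly iterated), and a single geometrically distinct simple orbit of $\alpha$ contributes only boundedly many of them to a given action window while the rank there is at least $\sum_{i=0}^{2n}\beta_i(B)$, the finite pigeonhole of Theorem \ref{ekeland} (after perturbing $\alpha$ to a nondegenerate contact form in the same conformal class and passing back by compactness in the degenerate case) shows that $\alpha$ has at least $\sum_{i=0}^{2n}\beta_i(B)$ geometrically distinct periodic Reeb orbits.

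The geometric input of the first paragraph is elementary and already settles the Morse case. The real work lies in the last two paragraphs: (a) controlling the Morse--Bott spectral sequence for the prequantisation bundle --- if $\mathcal L$ is not sufficiently ``index positive'' the iterate windows overlap in degree, and one must argue one action window at a time through the transfer morphisms instead of computing the whole homology; and (b) turning ``rank $\ge\sum\beta_i(B)$ per window'' into ``$\ge\sum\beta_i(B)$ geometrically distinct orbits'', which requires the standard bookkeeping of good versus bad orbits, of the at most linear growth of Conley--Zehnder indices under iteration, and of the degenerate-to-nondegenerate limiting step. I expect (b) to be the main obstacle.
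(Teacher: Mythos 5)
Your first paragraph is, in substance, the paper's entire proof of Proposition \ref{propintro:orb}: write $\Sigma$ as a radial graph $S_{e^f}=\{\,\vert u\vert = e^{f(\pi(u))}\,\}$, compute the Reeb field $R_\alpha = e^{-2f\circ\pi}\bigl(2\pi\,\partial_\theta + \tfrac{2}{\kappa}\bar{X}_f\bigr)$, observe that the fibre circles over critical points of $f$ are geometrically distinct closed Reeb orbits, and invoke the Morse inequalities. The paper stops there; it does not use $SH^{S^1,+}$ for this proposition at all (that machinery is reserved for the pinched case, Theorem \ref{thmintro:orb}), so up to this point you are on exactly the paper's route.

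Where you go beyond the paper, to handle a non-Morse radius function by $SH^{S^1,+}$, there is a genuine gap which you only partly acknowledge. The domain $W=\{r\le\rho\}$ you propose to use is not a Liouville domain: $d(r^2\theta^\nabla)=2r\,dr\wedge\theta^\nabla+\kappa r^2\pi^{\star}\omega_B$ degenerates along the zero section, and since $[\omega_B]\neq 0$ on the closed base, the disk bundle of a negative line bundle carries no exact symplectic structure at all; hence Theorem \ref{thm:invLiouville}, Theorem \ref{thm:computing} and the transfer morphisms of Section \ref{chapter:properties} are not applicable to $W$, and its ``completion'' is not a Liouville manifold. This is precisely why Theorem \ref{thmintro:orb} postulates an auxiliary Liouville filling $W'$ of the circle bundle $S_{R_1}$, a lacunary Morse function on $B$, the pinching $R_2/R_1<\sqrt{2}$ and the lower bound on the minimal period: without the pinching, the action windows in the transfer argument overlap with those of iterated orbits and the pigeonhole of Theorem \ref{ekeland} cannot yield geometrically distinct orbits (the obstacle (b) you flag yourself), and the Morse--Bott computation of $SH^{S^1,+}$ of the prequantization disk bundle that you invoke (associated graded $\bigoplus_{k\ge1}H_*(B;\Q)$) is neither established in the paper nor available without further index hypotheses. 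So for the statement as the paper proves it, keep your first paragraph and discard the rest; the degenerate case is not covered by the paper either, whose appeal to the Morse inequalities tacitly assumes the radius function is Morse.
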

We study then hypersurfaces pinched between circles bundles and get:
\begin{theorem}\label{thmintro:orb}
	Let $\Sigma$ be a contact type hypersurface in a negative line bundle $\mathcal{L}$, over a symplectic manifold $B$.
	Suppose that there exists a Liouville domain $W'$ (such that its first Chern class vanishes on all tori)  whose boundary coincides with the circle bundle of radius $R_1$ in $\mathcal{L}$, denoted $S_{R_1}$.
	Suppose there exists a Morse function $f:B\rightarrow\R$ such that the set of indices of all critical points of $f$ is lacunary.
	Let $\alpha$ be the contact form on $\Sigma$  induced by $r^2\theta^\nabla$ on  $\mathcal{L}$ ($\theta^\nabla$ is the transgression form on $\mathcal{L}$ and $r$ is the radial coordinate on the fiber).
	Assume that $\Sigma$ is ``pinched'' between two circle bundles $S_{R_1}$ and $S_{R_2}$ of radii $R_1$ and $R_2$ such that $0<R_1<R_2$ and $\frac{R_2}{R_1}<\sqrt{2}$.
	Assume that  the minimal period of any periodic Reeb orbit on $\Sigma$ is  bounded below by $R_1^2$.
	Then $\Sigma$ carries at least $\sum_{i=0}^{2n} \beta_{i}$ geometrically distinct periodic Reeb orbits, where the $\beta_{i}$ denote the Betti numbers of $B$.
\end{theorem}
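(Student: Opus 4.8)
The plan is to argue by contradiction, assuming $\Sigma$ carries only finitely many geometrically distinct periodic Reeb orbits, and to build a Liouville domain $W$ filling $\Sigma$ whose positive $S^1$-equivariant symplectic homology can be computed in two incompatible ways. First, I would use the pinching hypothesis: since $\Sigma$ sits between the circle bundles $S_{R_1}$ and $S_{R_2}$ with $R_2/R_1 < \sqrt 2$, the region bounded by $\Sigma$ in $\mathcal L$ is a Liouville domain $W$ (with $W' \subset W$ along the collar between $S_{R_1}$ and $\Sigma$), and the Viterbo-type transfer morphism of Section \ref{chapter:properties} gives a commuting triangle relating $SH^{S^1,+}(W')$, $SH^{S^1,+}(W)$, and $SH^{S^1,+}$ of the domain bounded by $S_{R_2}$. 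The point of the ratio $\sqrt 2$ is that it controls which action windows — hence which Conley-Zehnder index ranges — are seen by the transfer maps, exactly as in the proof of Theorem \ref{ekeland} for hypersurfaces pinched between spheres in $\R^{2n}$; here the period bound $\ge R_1^2$ plays the role of the lower pinching radius.

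Next I would identify the two ``endpoints''. The circle bundle $S_{R_1}$ bounds $W'$, and a neighborhood computation (using that the contact form is $r^2\theta^\nabla$, so the Reeb flow on $S_{R_1}$ is the circle action on the fibers, with periodic orbits coming from critical points of the Morse function $f$ after a small perturbation making the flow nondegenerate, à la Bourgeois–Oancea's computation for prequantization bundles) shows that the good periodic Reeb orbits on $S_{R_1}$ have Conley-Zehnder indices built from $\mathrm{ind}(p)$ for critical points $p$ of $f$ together with an even shift from iterating the fiber direction. The lacunarity of the index set of $f$ then propagates to lacunarity of the full Conley-Zehnder index set, so Theorem \ref{thm:computing} applies to $W'$ (together with a cone region) and yields $\dim SH^{S^1,+} = \sum_i \beta_i$, the sum of Betti numbers of $B$. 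The same is true for the circle bundle $S_{R_2}$, and the vanishing of $c_1$ on tori for $W'$ guarantees the $\Z$-grading is well defined so the transfer morphism between these two is actually an isomorphism onto the range of relevant actions.

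Finally, I would derive the contradiction. If $\Sigma$ had fewer than $\sum_i \beta_i$ geometrically distinct periodic orbits, then — because each geometrically distinct orbit contributes at most one generator of $SH^{S^1,+}$ in each degree after the good-orbit bookkeeping, and the action/index window pinned down by $R_2/R_1 < \sqrt 2$ forces the transfer morphisms $SH^{S^1,+}(S_{R_2}\text{-domain}) \to SH^{S^1,+}(W) \to SH^{S^1,+}(W')$ to be surjective, resp. injective, on that window — the total rank available from orbits of $\Sigma$ would be strictly less than $\sum_i \beta_i$, contradicting that the composition is an isomorphism of rank $\sum_i \beta_i$. I expect the main obstacle to be the bookkeeping of Conley-Zehnder indices and action windows: one must check that the $\sqrt 2$ pinching together with the period lower bound $R_1^2$ really does isolate an index range on which the transfer maps are forced to be isomorphisms, and that the perturbation of the degenerate Reeb flow on the circle bundles does not move indices out of the lacunary pattern. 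This is precisely the technical heart, and it mirrors — but must be adapted from $\R^{2n}$ to the line bundle setting, where the base contributes its Morse complex — the argument used to prove Theorem \ref{ekeland} and Proposition \ref{propintro:orb}.
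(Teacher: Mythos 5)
Your proposal follows essentially the same route as the paper: perturb the circle bundles using the Morse function $f$ (so their Reeb orbits correspond to critical points of $f$), compute the positive $S^1$-equivariant homology of their fillings via Theorem \ref{thm:computing} using lacunarity, squeeze $\widetilde{\Sigma}$ between them and apply the action-truncated transfer triangle of Section \ref{chapter:properties}, and finally use the pinching $R_2^2<2R_1^2$ together with the period bound $\geq R_1^2$ to exclude iterates, exactly as in the proof of Theorem \ref{ekeland} (your ``contradiction'' framing is just the same rank argument). The one slip is your justification that the composed transfer is an isomorphism on the relevant action window: this follows from the Liouville-rescaling identification of Lemma \ref{lem:identitytransfer} together with the action truncation, not from the vanishing of $c_1$ on tori, which is only needed for the grading to be well defined.
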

Examples of line bundles where  this  theorem can be applied are given : the tautological bundle over a complex projective space and the tautological bundle over the Grassmannian of oriented $2$-planes in $\R^{2n}$.

\subsection*{Thanks}
The results presented here are part of my PhD thesis; I  thank my thesis advisers 
Frédéric Bourgeois and Alexandru Oancea for their  guidance and
 enlightening remarks.
My work also greatly benefited from discussions with Strom Borman, Samuel Lisi, Michael Hutchings, 
Jungsoo Kang, Yiming Long, Maksim Maydanskiy, Vinicius Ramos and Alexander Ritter.\\
I am grateful to the  Belgian FNRS-FRS for its support through a position
of Aspirant during my thesis.
I  deeply thank the referee for his careful reading and remarks which lead to corrections of this text.
This paper was written while being a Postdoctoral Fellow of the BAEF at UC Berkeley under the supervision of Michael Hutchings.
I also acknowledge travel support from the ERC via grants StG-259118-STEIN 
and StG-239781-ContactMath, from the Communaut\'e fran\c caise de Belgique via an ARC  
and from the Belgian federal government via the PAI ``Dygest''.


\section{Background on symplectic homology}\label{chap:background}


\begin{definition}[The Setup]\label{def:setup}
Let $(W,\omega)$ be a compact {\emph{symplectic manifold with contact type boundary}} $M:=\partial W$.
This means that there exists a Liouville vector field $X$ (i.e. a vector field $X$ such that $\mathcal{L}_{X}\omega = \omega$) defined on a neighbourhood of the boundary $M$, and  transverse to $M$. In the sequel, we 
assume that the Liouville vector field has been chosen and  we   denote by $(W,\omega,X)$  such
a manifold.
We denote by $\lambda$ the $1$-form defined in a neighbourhood of $M$ by 
$
\lambda : =\iota(X)\omega
$
and by $\alpha$  the  {\emph{contact $1$-form}} on $M$ which is the restriction of $\lambda$ to $M$:
\[
	\alpha := \bigl(\iota(X)\omega\bigr)_{\vert_{M}} .
\]
We denote by $\xi$ the {\emph{contact structure}} defined by $\alpha$ ($\xi:=\ker\alpha$), and by $R_{\alpha}$ 
the {\emph{Reeb vector field}} on $M$ ($
\iota(R_{\alpha})d\alpha =0, ~
		\alpha(R_{\alpha}) = 1$).
The {\emph{action spectrum}} of $(M,\alpha)$ is the set of all periods of the Reeb vector field :
$$
	\Spec(M,\alpha) := \{ T\in\R^{+} \,\vert\, \exists \gamma \textrm{ periodic orbit of } R_{\alpha} \textrm{ of period } T \}.
$$
The {\emph{symplectic completion}}  of $(W,\omega,\lambda)$ is the symplectic manifold defined by
\[
	\widehat{W} := W \bigcup_{G} (M\times\R^{+}):=\bigl(W \sqcup (M\times [-\delta,+\infty])\bigr)
	\raisebox{-0.5ex}{$/$}\raisebox{-1ex}{$\sim_G$}
\]
with the symplectic form
\[
	\widehat{\omega} :=
	\begin{cases}
		\omega &\textrm{on } W\\
		d(e^{\rho}\alpha) &\textrm{on } M\times [-\delta,+\infty]
	\end{cases}.
\]
The equivalence $\sim_G$,  between  a neighbourhood $U$ of $M$ in $W$ and $M\times[-\delta,0]$, is defined by the diffeomorphism
$
	G: M\times[-\delta,0]\rightarrow U:\, (p,\rho)\mapsto \varphi_{\rho}^{X}(p)
$
which is induced by  the flow of the  vector field $X$.\\
{\emph{A Liouville domain}} is a compact {{symplectic manifold with contact type boundary}} $(W,\omega,X)$
for which the vector field $X$ is globally defined. In that case the symplectic $2$-form is exact : $\omega=d\lambda$ where
$\lambda =\iota(X)\omega$; a Liouville domain is also called an {\emph{exact symplectic manifold with contact type boundary}} 
and is sometimes denoted $(W,\lambda)$.\\
Throughout this paper we assume 
that $W$ is symplectically atoroidal, i.e the symplectic form and the first Chern class vanish on all tori.
This assumption implies that the action of a loop and the Conley-Zehnder index\footnote{For a definition of the Conley-Zehnder index, which will be denoted $\mu_{CZ}$ in the following, we refer to \cite{JGutt, AD, CZ}} of a $1$-periodic orbit of a Hamiltonian are well-defined. 
\end{definition}


Symplectic homology was developed by Viterbo in \cite {V}, using works of Cieliebak, Floer, Hofer \cite{FH, CFH}; it is 
defined for a compact symplectic manifold $W$ with boundary of contact type,
 as  a direct limit  of Floer homologies of the symplectic completion of $W$, using 
some special Hamiltonians. 

\begin{definition}\label{def:Hstd}
	The class $\mathcal{H}_{\textrm{std}}$ of {\emph{admissible Hamiltonians}}  consists of smooth functions $H : S^{1}\times\widehat{W} \rightarrow \R$
	satisfying the following conditions:\\
		\indent 1- $H$ is negative and $C^{2}$-small on $S^{1}\times W$ ;\\
		\indent 2- there exists $\ \rho_{0} \geq 0$ such that $H(\theta, p, \rho) = \beta e^{\rho} + \beta' \textrm{ for } \rho\geq \rho_{0}$, with
			$0<\beta\notin\Spec(M,\alpha)$  and $\beta'\in\R$;\\
		\indent 3- $H(\theta, p, \rho)$ is $C^{2}$-close to  $h(e^{\rho})$   on $S^{1}\times M\times[0,\rho_{0}]$, for $h$ a convex increasing function.\\
\noindent We say furthermore that it is non degenerate if all $1$-periodic orbits of $X_{H}$ are nondegenerate (for a time-dependent Hamiltonian $H : S^{1}\times\widehat{W} \rightarrow \R$, the time dependent Hamiltonian vector field $X_{H}$ is defined by the relation 
$\widehat{\omega}(X^{\theta}_{H},.) = dH(\theta,\cdot)$ for each $\theta\in S^1$).\\
We  denote  by  $\Per(H)$ the set of $1$-periodic orbits of $X_H$ . \\ 
	The {\emph{ class $\J$ of admissible}} $J: S^1\rightarrow \End (T\widehat{W}) : \theta \mapsto J^\theta$ consists of smooth loops of compatible almost complex structures $J^\theta$ on $\widehat{W}$,
	such that, at infinity (i.e. for $\rho$ large enough) $J$ is autonomous (i.e. independent of $\theta$),  invariant under translations in the $\rho$ variable,  and satisfies
	\[
		J^\theta \xi= \xi \qquad  J^\theta (\partial_{\rho})=R_{\alpha}.
	\]
\end{definition}
\begin{remark}\label{orbitsofHstand}
Condition 1 implies that the only $1$-periodic orbits of $X_H$ in $W$ are constants; they correspond to critical points of $H$.\\
On $S^{1}\times M\times[0,+\infty[ $, for a  Hamiltonian of the form $H_1(\theta, p, \rho)=h_1(e^{\rho})$, one has $X_{H_1}^{\theta}(p,\rho)=-h'_1(e^{\rho})R_{\alpha}(p)$.
 Hence, for such a Hamitonian $H_1$,  with $h_1$ increasing, the image of a $1$-periodic orbit of $X_{H_1}$ is the image of a periodic orbit of the Reeb vector field $-R_{\alpha}$ of period $T:=h'_1(e^{\rho})$ located at level $M\times \{ \rho \}$. In particular, condition 2 implies that there is no $1$-periodic orbit of $X_H$ in $M\times [\rho_0,+\infty[ $ for a Hamiltonian $H$ in $\mathcal{H}_{\textrm{std}}$. Condition 3 ensures that for any non constant $1$-periodic orbit $\gamma_H$ of $X_H$  for a Hamiltonian $H$ in $\mathcal{H}_{\textrm{std}}$, there exists a closed orbit of the Reeb vector field $R_{\alpha}$ of period $T<\beta$ (with $\beta$ the slope of $H$ ``at $\infty$''), such that $\gamma_H$ is close to this  closed orbit of (minus) the Reeb vector field  located in $M\times \{ \rho \}$
 with  $T=h'(e^{\rho})$. \\  
We can consider a larger class of admissible Hamiltonians, removing conditions 1 and 3.	
\end{remark}
 \begin{definition}[Symplectic homology]	
The Floer complex $ SC(H,J) $ is the complex generated by $1$-periodic orbits of the Hamiltonian vector field $X_H$, graded by minus their Conley Zehnder index 
, with boundary $\partial$ (well defined for a  so called regular pair $(H,J)$) defined  by a count with signs of  Floer trajectories,
i.e maps $u:\R\times S^1\rightarrow W$ satisfying:
\begin{equation} \label{Floer}
	\frac{\partial u}{\partial s}(s,\theta) + J^{\theta}\bigl(u(s,\theta)\bigr)\biggl(\frac{\partial u}{\partial \theta}(s,\theta) - X_H^{\theta}\bigl(u(s,\theta)\bigr)\biggr)=0.
\end{equation}
The {\emph{symplectic homology}}  of $(W,\omega,X)$ is defined as the direct limit 
$$
	SH_{*}(W,\omega,X) := \lim_{\substack{\longrightarrow \\ H\in \mathcal{H}_{\textrm{std}}}} SH_{*}(H,J)
$$
where, for each $H$,  $J:S^1\rightarrow End\bigl((\Gamma(T\widehat{W})\bigr)$  is chosen so that $(H,J)$ is a regular pair.
 To define the  direct limit  one needs a partial order $\leq$ on $\Hstd$ and morphisms  $SH(H_1,J_1)\rightarrow SH(H_2,J_2)$ whenever $H_1\leq H_2$ are non degenerate.
The partial order on $\Hstd$ is given by $H_1\leq H_2$ if $H_1(\theta,x)\leq H_2(\theta,x)$ for all $(\theta,x)\in S^1\times\widehat{W}$
		(for more general Hamiltonians, it is enough to have $H_1(\theta,x)\leq H_2(\theta,x)$ for all $(\theta,x)$ outside a compact domain).\\		
The  morphism $SH(H_1,J_1)\longrightarrow SH(H_2,J_2)$ is the  {\emph{continuation morphism}}   induced by an smooth  increasing homotopy of regular pairs $(H_s,J_s), s\in\R$ between $(H_1,J_1)$ and $(H_2,J_2)$, with $(H_s,J_s)$  constant for $\abs{s}$ large, when it is regular.  By increasing, we mean $\partial_sH_s\geq0$ (again for more general $H$ it is enough to consider $\partial_sH_s\geq0$ outside a compact subset). The continuation morphism  is defined 	by a count with signs of  solutions $u:\R\times S^1\rightarrow \widehat{W}$ of the  equation
\begin{equation}\label{eq:floerparam}
	\frac{\partial u}{\partial s} + J_s^{\theta}\circ u\Bigl(\frac{\partial u}{\partial \theta} - X^\theta_{H_s}\circ u\Bigr)=0
\end{equation}
 with finite energy $ E(u) := \int_{-\infty}^{+\infty}\int_{S^1}\norm{\frac{\partial u}{\partial s}}^2 d\theta ds$ .
 \end{definition}

The idea of positive symplectic homology is to ``remove'' the data of constant $1$-periodic orbits from symplectic homology.
{\emph{We assume  that $(W,\omega,X)$ is a Liouville domain},
in order to identify the set of critical points of a Hamiltonian with its $1$-periodic orbits of small action.\\
Let $H:S^1\times\widehat{W}\rightarrow \R$ be a Hamiltonian in $\Hstd$. The Hamiltonian action functional $\A_{H}: C_{\textrm{contr}}^{\infty}(S^{1},\widehat{W})\rightarrow \R$ is defined as
\[
	\A_{H}(\gamma) := -\int_{D^{2}}\sigma^{\star}{\widehat{\omega}} - \int_{S^{1}}H\bigl(\theta,\gamma(\theta)\bigr)d\theta
\]
where $\sigma : D^{2}\rightarrow\widehat{W}$ is an extension of $\gamma$ to the disc $D^{2}$.
When the symplectic form is exact, $\w=d\lambda$, the action is
$\A_{H}(\gamma) := -\int_{S^{1}}\gamma^{\star}{\widehat{\lambda}} - \int_{S^{1}}H\bigl(\theta,\gamma(\theta)\bigr)d\theta.$\\
The $1$-periodic orbits of $H \in \Hstd$ fall into two classes  :
critical points in $W$, whose action is strictly less than some small positive constant $\epsilon$ (indeed, if $(\theta,x)$ is a critical point of $H$, the action of the constant orbit is equal to $-H(\theta,x)$) and  non-constant periodic orbits lying in $\widehat{W}\setminus W$ whose action is strictly greater than $\epsilon$ ( indeed, the action of such an orbit  is close, for a given $\rho$ in $[0,\rho_0]$ with $T=h'(e^\rho)$ in $\Spec(M,\alpha)$, to the action of the orbit of the vector field $-h'(e^\rho)R_{\alpha}$ located in $M\times \{ \rho \}$; this is given by $-\int_{S^1}e^\rho\alpha(-h'(e^\rho)R_{\alpha})d\theta -\int_{S^1}h(e^\rho)d\theta=e^\rho h'(e^\rho)-h(e^\rho)=e^\rho T-h(e^\rho)$; it is positive since $h$ is convex).\\
The $\epsilon$  above is  chosen (for instance) as half the minimal value of the periods of closed orbits of the Reeb vector field on $M=\partial W$. Functions $H$ are chosen so that the value of $\vert H\vert$ in $S^1\times W$ is less than $\epsilon$,
so that $h(e^\rho)$ is less than $\half \epsilon$ (hence $e^\rho T-h(e^\rho)$ is greater than $\frac{3}{2}\epsilon$) and the $C^2$-closeness to an autonomous function  is such that the  actions differ at most by $\half \epsilon$.
\begin{definition}[positive symplectic homology] Let $(W,\omega,X)$ be a Liouville domain and let $H$ be in $\Hstd$.
Let $SC^{\leq\epsilon}(H,J)$ be the complex generated by the $1$-periodic orbits of action no greater than $\epsilon$.
It is built out of critical points of $H$ and it is a subcomplex of $SC(H,J)$, since the action decreases along Floer trajectories.
	The {\emph{positive Floer complex}} is defined as the quotient of the total complex by the subcomplex of critical points;
	$$
		SC^{+}(H,J) := SC(H,J)/\raisebox{-1ex}{$SC^{\leq\epsilon}(H,J)$}
	$$
The differential induces a differential on the quotient which is still denoted $\partial$.
The continuation morphisms mentioned above descend to the quotient since the action decreases along a solution of \eqref{eq:floerparam}  (when the homotopy is increasing everywhere).
The {\emph{positive symplectic homology}} of $(W,\omega)$ is defined as a direct limit over non degenerate $H\in\Hstd$ of the homology of $SC^{+}(H,J)$
	$$
		SH^{+}(W,\omega,X) := \lim_{{\substack{\longrightarrow\\ {H\in \Hstd}}}} H_{*}\bigl(SC^{+}_{*}(H,J),\partial\bigr).
	$$
\end{definition}

\subsection{$S^1$-equivariant symplectic homology}\label{section:SHS1}

Let $X$ be a topological space endowed with an $S^1$-action.
If the $S^1$-action is free, $X/\raisebox{-0.5ex}{$S^1$}$ is a topological space.
The aim of $S^1$-equivariant homology is to build on the space $X$ a homology which coincides, when the action is free, with the singular homology of the quotient.
 One considers the universal principal $S^1$-bundle $ES^1\rightarrow BS^1$. 
The diagonal action on $X\times ES^1$ is free and one denotes by $X\times_{S^1} ES^1$ the quotient $(X\times ES^1)/\raisebox{-0.5ex}{$S^1$}$.
	Following Borel, the {\emph{$S^1$-equivariant homology of $X$}} with $\Z$-coefficients is defined as
	$
		H^{S^1}_*(X):=H_*(X\times_{S^1} ES^1,\Z).
	$
For symplectic homology, one defines in a similar way the $S^1$-equivariant symplectic homology
for any compact symplectic manifold  with contact type boundary $(W,\w,X)$;
the $S^1$-action one is referring to  is the reparametrization action on the loop space (not an action on $W$)
\[
	\varphi\cdot\gamma(\theta) = \gamma(\theta-\varphi).
\]
This homology was first introduced by Viterbo in \cite{V}; we present here a different  approach, which 
 was sketched by Seidel in \cite{Seidel} and which was  studied in detail by Bourgeois and Oancea  in \cite{bo, BOjems, BOind}.
 It has the advantage to use a special class of Hamiltonians and simplified equations for Floer trajectories, so that computations are often doable. 
 The important point is that this $S^1$equivariant symplectic homology coincides with the one defined by Viterbo.

\medskip
The model of  $ES^1$ is given as a limit of spheres  $S^{2N+1}$ for $N$ going to $\infty$ with the Hopf $S^1$-action.
To provide $S^1$-invariant functionals, we use $S^1$-invariant Hamiltonians : $H:S^1\times\widehat{W}\times S^{2N+1}\rightarrow\R$ such that
$$ 
H(\theta+\varphi,x,\varphi z) = H(\theta,x,z)\qquad \forall \theta,\varphi\in S^1, \forall z\in S^{2N+1}.
$$
The {\emph{parametrised action functional}} $\A:C_{contr}^{\infty}(S^1,\widehat{W})\times S^{2N+1}\longrightarrow\R$,  is defined as
\begin{equation}\label{eq:paramaction}
	\A(\gamma,z):=-\int_{D^{2}}\sigma^{\star}\widehat{\omega}-\int_{S^1}H\bigl(\theta,\gamma(\theta),z\bigr)d\theta 
\end{equation}
where $\sigma : D^{2}\rightarrow\widehat{W}$ is an extension of $\gamma$ to the disc $D^{2}$.
It is invariant under the diagonal $S^1$-action on $C^{\infty}(S^1,\widehat{W})\times S^{2N+1}$.\\
The {\emph{critical points}} of the parametrised action functional are pairs $(\gamma,z)$ such that
\begin{equation}\label{critpointsS1}
	\gamma\in\Per(H_z)\quad \textrm{and}\quad\int_{S^1}\frac{\partial H_z}{\partial z}\bigl(\theta,\gamma(\theta)\bigr)d\theta=0,
\end{equation}
where $H_z$ is the function on $S^1\times\widehat{W}$ defined by $H_z(\theta,x):=H(\theta,x,z)$ and where
$\Per(H_z)$ denote, as before, the set of $1$-periodic orbits of $X_{H_z}$.
The set $\Per^{S^1}(H)$ of critical points of $\A$  is $S^1$-invariant.
If $q=(\gamma,z)\in\Per^{S^1}(H)$, we denote by $S_q$ the $S^1$-orbit of $q$.
Such an $S_q$ is called {\emph{nondegenerate}} if the Hessian $d^2\A(\gamma,z)$ has a $1$-dimensional kernel for some (and hence any) $(\gamma,z)\in S_q$.

\medskip
\noindent{\bf{The data : }}
We consider a compact symplectic manifold with compact type boundary $(W,\w,X)$.
We  fix a sequence of $C^2$-small perfect Morse functions $f_N:\C P^N\rightarrow\R$,  together with a  Riemannian metric $\bar{g}_N$ on $\C P^N$ for which the gradient flow of $f_N$ has the Morse-Smale property.
We shall take  the standard metric and
\[
	f_N\bigl([w^0:\ldots:w^N]\bigr) = C\frac{\sum_{j=0}^N (j+1)\abs{w^j}^2}{\sum_{j=0}^N\abs{w^j}^2} \quad\textrm{  with } C<0\in\R.
\]
We denote by $\tilde{f}_N:S^{2N+1}\rightarrow \R$ their $S^1$-invariant lift,  and by $Crit(\tilde{f}_N)$  the set of critical points of $\tilde{f}_N$ (which is a union of circles).
We choose a point $z_j$ on the critical circle which projects on the critical point of $-f_N$ of index $2j$.
In our example,  $z_j=(w^0,\ldots,w^N)\in S^{2N+1}$  with $w^i=\delta^i_j$.
We fix a local slice $T_{z_j}$ transverse in $S^{2N+1}$ to the circle in $Crit(\tilde{f}_N)$ at  $z_j$; again in our example $T_{z_j} = \{(w^0\ldots,w^N)\in S^{2N+1}\,|\,w^j\in\R^+\}$.
We consider $U_N$ a neighbourhood of $Crit(\tilde{f}_N)$ and  $\check{\rho}_N : S^{2N+1}\rightarrow\R$  a $S^1$ invariant cut-off function on $U_N$ which is equal to $1$ in a neighbourhood
$U_N'\subset U_N$ of $Crit(\tilde{f_N})$ and $0$ outside $U_N$.
We set  $$\epsilon_N := \min_{z\in S^{2N+1}\setminus U_N'} \norm{\vec{\nabla}\tilde{f_N}(z)} >0.$$
\begin{definition}[Class of admissible Hamiltonians]\label{def:hamadmS1}
An {\emph{$S^1$-invariant Hamiltonian $H$ is  admissible}} if $H_z$ is in $\Hstd$ (as in Definition \ref{def:Hstd}) with constant slope independent of $z$ for all $z\in S^{2N+1}$
	and if for any critical point $q\in\Per^{S^1}(H)$, the $S^1$-orbit $S_q$ is non degenerate.
	Let $\mathcal{H}^{S^1,N}$ be the family of such Hamiltonians.	
We look at the subfamily $\Hs^{S^1,N}(f_N)\subset\Hs^{S^1,N}$ consisting of  Hamiltonians of the form $H+\tilde{f}_N$ with $H : S^1\times \widehat{W}\times S^{2N+1}\rightarrow \R$ in $\Hs^{S^1,N}$ such that\\
	\indent 1. Each critical point $(\gamma,z)$ of the parametrized action functional $\A_{H+\tilde{f}_N}$ defined by ${H+\tilde{f}}_N$ lies over a $z$ which is a critical point of $\tilde{f}_N$;\\
	\indent 2. For every $z\in Crit(\tilde{f}_N)$, $H(\cdot,\cdot,z)$ has non degenerate periodic orbits;\\
	\indent 3. \label{condnondeg} $H+\tilde{f}_N$ has nondegenerate $S^1$-orbits;\\
 	\indent 4. \label{condprocheMB} $\norm{\vec{\nabla}_zH\bigl(\theta,x,z\bigr)}<\epsilon$, for all $z\in S^{2N+1}\setminus U'$;\\
	\indent 5. \label{condspe} For all $z\in U'$, $\vec{\nabla}_zH\cdot\vec{\nabla}\tilde{f}_N(z)=0$.
\end{definition}
\begin{remark}
	Condition 3 can be replaced by the following :
	near every critical orbit of $\tilde{f}_N$, we have $H(\theta,x,z) = H'(\theta-\phi_z,x)$, where $\phi_z\in S^1$ is the unique element such that
	the action of its inverse brings $z$ into $T_{z_0}$, i.e. $\phi_z^{-1}\cdot z\in T_{z_0}$ and $H'\in\Hstd$.
	We shall consider  elements $H$ which are built from an $H' : S^1\times \widehat{W}\rightarrow \R$ in $\Hstd$ as in Definition \ref{def:Hstd}, close to an autonomous Hamiltonian; we shall develop this in next section.
\end{remark}

\begin{definition}[The chains]
Given an admissible $H+\tilde{f}_N$, the set denoted $\Per^{S^1}(H+\tilde{f}_N)$ of critical points $(\gamma, z)$ of the parametrized action functional $\A_{H+\tilde{f}_N}$ is a union of  circles 
$$
	\Big\{ S_{(\gamma,z)}:=S^1\cdot\bigl(\gamma,z\bigr)= \{\varphi\cdot(\gamma,z)=(\varphi\cdot\gamma,\varphi z)\,|\,\varphi\in S^1\}\Big\}.
$$
Each  of those circle gives a generator of the chain complex.
The index of the generator $S_{(\gamma,z)}$ is defined to be 
$$\mu(S_{\gamma,z}):=-\mu_{CZ}(\gamma)+\mu_{Morse}(z;-\tilde{f}_N).$$
The {\emph{chain complex}} is defined as:
$$
	S\widetilde{C}_*^{S^1,N}(H,f_N) := \bigoplus_{S_p\subset\Per^{S^1}(H+\tilde{f}_N)}\Z\langle S_p\rangle.
$$
\end{definition}
\begin{definition}[The differential] 
A {\emph{parametrized loop }} of almost complex structures
$
	J:S^1\times S^{2N+1}\rightarrow\End(T\widehat{W}), (\theta,z)\mapsto J^{\theta}_z
$
 is   $S^1$-invariant if $
	J^{\theta+\varphi}_{\varphi z} = J^{\theta}_z,\, \forall \theta,\varphi\in S^1, \forall z\in S^{2N+1} $
and is {\emph{admissible}} if for all $z$ in $S^{2N+1}$, the loop of almost complex structures $J_z$ is in $\J$ as defined in Definition \ref{def:Hstd}.\\
Let $(J^{\theta}_z)$ be an $S^1$-invariant family of almost complex structures independent of $z$ along each local slice.
Let $p^-=(\gamma^-,z^-)$ and $p^+=(\gamma^+,z^+)$ be two critical points of $\A_{H+\tilde{f}_N}$.
We denote by $\widehat{\Mod}(S_{p^-},S_{p^+};H,f_N,J^{\theta}_z,g)$ the space of solutions $(u,z)$, $u:\R\times S^1\rightarrow\widehat{W}$, $z:\R\rightarrow S^{2N+1}$
to the system of equations
\begin{equation}
	\left\{
	\begin{aligned}
		\partial_su+J^{\theta}_{z(s)}\circ u\bigl(\partial_{\theta}u-X_{H_{z(s)}}\circ u\bigr) &=0\\
		\dot{z}-\vec{\nabla}\tilde{f}_N(z) &=0
	\end{aligned}
	\right.
\end{equation} such that $
	\lim_{s\rightarrow-\infty}\bigl(u(s,\cdot),z(s)\bigr) \in S_{p^-}$ and $ \lim_{s\rightarrow\infty}\bigl(u(s,\cdot),z(s)\bigr) \in S_{p^+}$.\\
If $S_{p^-}\neq S_{p^+}$, we denote by ${\Mod}(S_{p^-},S_{p^+};H,f_N,J^{\theta}_z,g)$ the quotient of the space\\
 $\widehat{\Mod}(S_{p^-},S_{p^+};H,f_N,J^{\theta}_z,g)$ by the reparametrization $\R$-action.
This quotient  ${\Mod}(S_{p^-},S_{p^+};H,f_N,J^{\theta}_z,g)$ carries a free $S^1$-action and we denote by $\Mod^{S^1}(S_{p^-},S_{p^+};H,f_N,J^{\theta}_z,g)$ the quotient of\\
${\Mod}(S_{p^-},S_{p^+};H,f_N,J^{\theta}_z,g)$ by this $S^1$-action.
For generically chosen $J^{\theta}_z$ and $g$, it is proven in \cite{bo} that the spaces 
	$\Mod^{S^1}(S_{p^-},S_{p^+};H,f_N,J^{\theta}_z,g)$ are smooth manifolds of dimension equal to $-\mu(S_{p^-})+\mu(S_{p^+})-1$.\\
The {\emph{differential}} $\widetilde{\partial}^{S^1} : S\widetilde{C}_*^{S^1,N}(H,f_N)\rightarrow S\widetilde{C}_{*-1}^{S^1,N}(H,f_N)$ is defined by
$$
	\widetilde{\partial}^{S^1}(S_{p^-}) := \sum_{\substack{S_p^+\subset\Per^{S^1}(H+\tilde{f}_N)\\ \mu(S_{p^-})-\mu(S_{p^+})=1}} \#\Mod^{S^1}(S_{p^-},S_{p^+};H,f_N,J^{\theta}_z,g)S_{p^+}
$$
where $ \#$ is a count with signs defined in \cite{bo}.
\emph{Continuation maps} are defined as usual, using the space of solutions $(u,s)$ of
\begin{equation}\label{eq:continuationS12}
	\left\{
	\begin{aligned}
		\partial_su+J^{\theta}_{s,z(s)}\circ u\bigl(\partial_{\theta}u-X_{H_{s,z(s)}}\circ u\bigr) &=0\\
		\dot{z}-\vec{\nabla}\tilde{f}_N(z) &=0
	\end{aligned}
	\right.
\end{equation}
with $H_s+\tilde{f}_N$ an increasing homotopy between $H_0+\tilde{f}_N$ and $H_1+\tilde{f}_N$.
\end{definition}
\begin{definition}[$S^1$-equivariant symplectic homology] ~\\
The {\emph{$S^1$-equivariant Floer homology groups}} are 
	\[
		SH^{S^1,N}_*(H,f_N, J,g) := H_*\bigl(S\widetilde{C}^{S^1,N}_*(H,f_N),\widetilde{\partial}^{S^1}\bigr).
	\]
	The {\emph{$S^1$-equivariant symplectic homology groups of $W$}} are 
	\[
		SH^{S^1}_*(W,\w,X) := \lim_{\substack{\longrightarrow\\ N}}\lim_{\substack{\longrightarrow\\ H\in\Hs^{S^1,N}(f_N)}}SH^{S^1,N}_*(H,f_N,J,g).
	\]
 The direct limit over $N$ is taken with respect to the $S^1$-equivariant embeddings $S^{2N+1}\hookrightarrow S^{2N+3}$ which induce maps
$$SH^{S^1,N}_*(W,\w,X)\rightarrow SH^{S^1,N+1}_*(W,\w,X)\quad \textrm{for each}\quad  N.$$
\end{definition}
We show here below that the action decreases along these continuation maps.
This allows to define $SH^{S^1,+}$ in the context of Liouville domains.
\begin{proposition}\label{prop:actionS1bo}
	Let $H_0+\tilde{f}_N$ and $H_1+\tilde{f}_N$ be Hamiltonians in $\Hs^{S^1,N}(f)$ and let $\tilde{H}_s:=H_s+\tilde{f}_N$ be an increasing homotopy between $H_0+\tilde{f}_N$ and $H_1+\tilde{f}_N$.
	If $(u,z)$, $u:\R\times S^1\rightarrow \widehat{W}$ and $z:\R\rightarrow S^{2N+1}$ is a solution of equations (\ref{eq:continuationS12})
	satisfying  the conditions ${\lim}_{{s\rightarrow-\infty}}\bigl(u(s,\cdot),z(s)\bigr)=\bigl(\gamma^-(\cdot),z^-\bigr)$ and $ \lim_{s\rightarrow+\infty}\bigl(u(s,\cdot),z(s)\bigr)=\bigl(\gamma^+(\cdot),z^+\bigr)
	$,
	then
	\[
		\A(\gamma^-,z^-)\geq\A(\gamma^+,z^+).
	\]
\end{proposition}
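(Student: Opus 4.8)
The plan is to compute $\frac{d}{ds}\A(u(s,\cdot),z(s))$ along a solution of the parametrized continuation equations \eqref{eq:continuationS12} and to show it is $\leq 0$, whence the inequality follows by integrating from $s=-\infty$ to $s=+\infty$. First I would write $\A(u(s,\cdot),z(s)) = -\int_{D^2}\sigma_s^\star\widehat\omega - \int_{S^1}H_s\bigl(\theta,u(s,\theta),z(s)\bigr)\,d\theta$, being careful that the homotopy makes the functional itself $s$-dependent (through $H_s$), so there is an extra ``explicit'' $\partial_s H_s$ term in the derivative in addition to the usual ``chain-rule'' terms. Differentiating, the area term contributes $-\int_{S^1}\widehat\omega(\partial_s u,\partial_\theta u)\,d\theta$, and the Hamiltonian term contributes $-\int_{S^1} dH_{s,z(s)}(\partial_s u)\,d\theta - \int_{S^1}\frac{\partial H_s}{\partial z}(\theta,u,z)\cdot\dot z\,d\theta - \int_{S^1}(\partial_s H_s)(\theta,u,z)\,d\theta$.

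Next I would simplify the first three terms using the two equations of the system. From the first Floer-type equation, $\partial_\theta u - X_{H_{z(s)}}\circ u = -J^\theta_{z(s)}\partial_s u$, so $\widehat\omega(\partial_s u,\partial_\theta u) = \widehat\omega(\partial_s u, X_{H_{z(s)}}\circ u) + \widehat\omega(\partial_s u, -J\partial_s u) = dH_{s,z(s)}(\partial_s u) + \widehat\omega(\partial_s u, -J\partial_s u)$; here I use $\widehat\omega(X_{H}^\theta,\cdot) = dH(\theta,\cdot)$ and the compatibility of $J$ with $\widehat\omega$, which gives $\widehat\omega(\partial_s u, -J\partial_s u) = -\|\partial_s u\|^2 \leq 0$. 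Combining, the area term plus the $dH$ term collapse to $-\int_{S^1}\|\partial_s u\|^2\,d\theta \leq 0$, exactly as in the non-equivariant case. The term involving $\dot z$ is $-\int_{S^1}\frac{\partial H_s}{\partial z}(\theta,u,z)\,d\theta \cdot \vec\nabla\tilde f_N(z)$ after using the gradient equation $\dot z = \vec\nabla\tilde f_N(z)$; I would need this to be $\leq 0$. Finally, the explicit term $-\int_{S^1}(\partial_s H_s)(\theta,u,z)\,d\theta$ is $\leq 0$ precisely because the homotopy is increasing, $\partial_s H_s \geq 0$ (everywhere, or outside a compact set, which is what is needed).

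The main obstacle I anticipate is the term $-\vec\nabla\tilde f_N(z)\cdot\int_{S^1}\frac{\partial H_s}{\partial z}(\theta,u(s,\theta),z(s))\,d\theta$. One cannot appeal to the critical-point equation \eqref{critpointsS1} here since $(u(s,\cdot),z(s))$ is not a critical point for generic $s$. The resolution should come from the admissibility conditions 4 and 5 in Definition \ref{def:hamadmS1} together with the structure of $\tilde f_N$: along the gradient trajectory $z(s)$, either $z(s) \in U'$, where condition 5 forces $\vec\nabla_z H\cdot\vec\nabla\tilde f_N(z) = 0$ pointwise and hence the whole term vanishes; or $z(s)\notin U'$, where condition 4 bounds $\|\vec\nabla_z H\| < \epsilon$ while $\|\vec\nabla\tilde f_N(z)\| \geq \epsilon_N$, and one must check the sign works out — more precisely one should observe that the relevant quantity is the $s$-derivative of $\tilde f_N(z(s))$ weighted appropriately, or absorb this term into a modified action functional $\A + \tilde f_N$ whose genuine gradient is being followed. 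I would therefore set up the argument with the full functional $\A_{H_s+\tilde f_N}$ rather than $\A_{H_s}$, so that the $\tilde f_N$ contribution $-\int_{S^1}d\tilde f_N(\dot z)\,d\theta = -\|\vec\nabla\tilde f_N(z)\|^2 \leq 0$ appears naturally and the cross term is controlled by Cauchy–Schwarz using conditions 4–5; this is the standard trick from \cite{bo} and I would cite it for the delicate sign bookkeeping, presenting the clean terms ($-\|\partial_s u\|^2$, $-\partial_s H_s$, $-\|\vec\nabla\tilde f_N\|^2$) in detail and referring to the admissibility conditions for the remaining cross term.
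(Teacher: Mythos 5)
Your proposal is correct and follows essentially the same route as the paper: one differentiates the parametrized action $\A_{H_s+\tilde f_N}$ along $(u(s,\cdot),z(s))$, collapses the area and $dH$ terms to $-\norm{\partial_s u}^2$ via the Floer equation, uses $\partial_s H_s\geq 0$ for the explicit term, and controls the remaining $z$-gradient term exactly as you anticipate, namely by conditions 4--5 of Definition \ref{def:hamadmS1} together with the definition of $\epsilon_N$, the $-\norm{\vec{\nabla}\tilde f_N(z)}^2$ contribution absorbing the cross term by Cauchy--Schwarz. The only detail you leave implicit that the paper spells out is the use of atoroidality/asphericity to justify differentiating the capping term, which is routine.
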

\begin{proof}
The parametrized action for the Hamiltonian $H_s+\tilde{f}$ on the pair $(u(s,\cdot ), z(s))$ is given by
\[ 
 -\int_{D^{2}}\sigma_s^{\star} \widehat{\w}
-\int_{S^1} ({H}_s+\tilde{f}_N) \left(\theta,u(s,\theta),z(s)\right)d\theta
 \]
where $\sigma_s : D^{2}\rightarrow\widehat{W}$ is an extension of $\gamma_s=u(s,\cdot)$ to the disc $D^{2}$.
By the asphericity condition, $\int_{D^{2}}\sigma_s^{\star} \widehat{\w}=\int_{D^{2}}\sigma_{s_0}^{\star} \widehat{\w}
+\int_{S^1\times [s_0,s]}u^{\star} \widehat{\w}$ so that 
{\small{\begin{align*}
		&\frac{\partial}{\partial s}\A_{{H}_s+\tilde{f}_N}\bigl(u(s,\cdot),z(s)\bigr) =  -\int_{S^1}\w(\partial_su,\partial_{\theta}u)d\theta-\int_{S^1}\tfrac{\partial}{\partial u}{H}_s\bigl(\theta,u(s,\theta),z(s)\bigr)
			\tfrac{\partial}{\partial s}u(s,\theta)d\theta\\
			&\qquad~ -\int_{S^1}\vec{\nabla}_z({H}_s+\tilde{f}_N)\bigl(\theta,u(s,\theta),z(s)\bigr)\cdot\dot{z}(s)d\theta
			-\int_{S^1}\bigl(\tfrac{\partial}{\partial s}({H}_s+\tilde{f}_N)\bigr)\bigl(\theta,u(s,\theta),z(s)\bigr)d\theta\\
		&\qquad=  -\int_{S^1}\w(\partial_su,\partial_{\theta}u)d\theta- \int_{S^1}dH_{s,z(s)}(\partial_su)d\theta\\
			 &\qquad~ -\int_{S^1}\vec{\nabla}_z({H}_s+\tilde{f}_N)\bigl(s,\theta,u(s,\theta),z(s)\bigr)\cdot\vec{\nabla}f_N(z) d\theta
			-\int_{S^1}\tfrac{\partial}{\partial s}({H}_s+\tilde{f}_N)\bigl(\theta,u(s,\theta),z(s)\bigr)d\theta.
			\end{align*}}}
The last term  is $ \leq0$ since the homotopy is increasing. The first line can be rewritten as 
{\small{\begin{align*}
		-\int_{S^1}\w(\partial_su,\partial_{\theta}u)d\theta  - \int_{S^1}\w(X_{H_{s,z(s)}},\partial_su)d\theta
			&= -\int_{S^1}\w(\partial_su,\partial_{\theta}u-X_{H_{s,z(s)}})d\theta \\
		&= -\int_{S^1}\w(\partial_su, J^{\theta}_{z(s)}\partial_su)d\theta \\
		&=-\norm{\partial_su}^2_{g_{J^{\theta}_{z(s)}}} 	\leq0.
	\end{align*}}}
The first term in the second line is $ \leq0$ by conditions 4 and 5 in definition \ref{condprocheMB}  and by the definition of $\epsilon$.
\end{proof}
\begin{remark}\label{rmk:action}
	With the assumptions of Proposition, \ref{prop:actionS1bo}, it appears in the proof above that
$\int \norm{\partial_su}^2_{g_{J^{\theta}_{z(s)}}} dsd\theta \leq \A(\gamma^-,z^-)-\A(\gamma^+,z^+).$
\end{remark}

\begin{definition}[Positive $S^1$-equivariant symplectic homology]
	Let $H\in\Hs^{S^1,N}(f_N)$ be a Hamiltonian.
	The {\emph{positive $S^1$-equivariant complex}} is defined as
	\[
		S{\widetilde{C}}^{S^1,N,+}(H,f_N) := S{\widetilde{C}}^{S^1,N}(H,f_N) / \raisebox{-1ex}{$S{\widetilde{C}}^{S^1,N,\leq\epsilon}(H,f_N)$}
	\]
	where $S{\widetilde{C}}^{S^1,N,\leq\epsilon}(H,f_N)$ is the set of critical points of $\A_{H+\tilde{f}_N}$ of action less than $\epsilon$.
	The differential passes to the quotient where we still denote it ${\widetilde{\partial}}^{S^1}$ and the positive $S^1$-equivariant Floer groups are defined as
	\[
		SH^{S^1,N,+}(H,f_N) := H\bigl(S{\widetilde{C}}^{S^1,N,+}(H,f_N),{\widetilde{\partial}}^{S^1}\bigr).
	\]
	The {\emph{positive $S^1$-equivariant symplectic homology}} is defined by
	\[
		SH^{S^1,+}_*(W,\w,X) := \lim_{\substack{\longrightarrow\\ N}}\lim_{\substack{\longrightarrow\\ H\in\Hs^{S^1,N}(f_N)}}SH^{S^1,N,+}_*(H, f_N).
	\]
	We assume $(W,\w,X)$ to be exact and we assume the function $f_N$ to be small  in order to  identify $1$-periodic orbits of small action with a pair $(p,z)$, $p$ a critical points of $H$.
\end{definition}


\section{$SH^{S^1,+}$ and periodic Reeb orbits}\label{chap:SHS1orbits}

The goal of this section is prove theorem \ref{thm:computing} which  relates the positive $S^1$-equivariant homology of a Liouville domain $(W,\lambda)$
to the Reeb orbits on $(M=\partial W,\alpha=\lambda_{\vert_M})$.
\subsection{The multicomplex defining positive $S^1$-equivariant homology}
We use the nice subclass of Hamiltonians  introduced in \cite{bo}; they are
constructed using elements in $\Hstd$ which are small perturbations of autonomous Hamiltonians.
\begin{definition}[Construction of admissible Hamiltonians from elements in $\Hstd$]\label{constrS1inv}
For $H'\in\Hstd$, we define an $S^1$-invariant Hamiltonian  $H_N: S^1\times \widehat{W}\times S^{2N+1}\rightarrow \R$ as follows.
Define $\widetilde{H}_N : S^1\times\widehat{W}\times U_N\rightarrow \R$ by $\widetilde{H}_N(\theta,x,z) := H'(\theta-\phi_z,x)$ where $\phi_z\in S^1$ is the unique element such that $\phi_z^{-1}\cdot z\in T_{z_j}$
when $z$ is close to the critical circle including $z_j$,
and  extend $\widetilde{H}_N$ to $H_N:S^1\times\widehat{W}\times S^{2N+1}\rightarrow \R$, by
\begin{equation}\label{eq:constr}
	H_N(\theta,x,z) := \check{\rho}_N(z)\widetilde{H}_N(\theta,x,z) + \bigl(1-\check{\rho}_N(z)\bigr)\beta(x)H'(\theta,x)
\end{equation}
using the cutoff function $\check{\rho}_N$ on $S^{2N+1}$ and  a  function
 $\beta:\widehat{W}\rightarrow\R$  which is $0$ where $H'$ is time-dependent and equal to $1$ outside a compact set.
The element $H_N$ is  automatically in $\Hs^{S^1,N}(f_N)$,  when  $H'\in\Hstd$ is a   small perturbation of some autonomous functions as developed further in this section.
\end{definition}

\subsubsection*{The complex for a subclass of special Hamiltonians}\label{section:specialcomplex} ~\\
Let $H': S^1\times \widehat{W}\rightarrow \R$ in $ \Hstd$  be fixed, with non degenerate $1$-periodic orbits, and  consider a sequence $H_N\in\Hs^{S^1,N}, N\geq1$ such that $$H_N(\theta,x,z)=H'(\theta-\phi_z,x)\quad \textrm{for every }\quad z\in Crit(\tilde{f}_N)$$ (for instance by the construction above)
and a sequence $J_N\in\J^{S^1,N}$ such that $J_N$ is regular for $H_N$.

Let $i_0:\C P^N\hookrightarrow\C P^{N+1}: [w^0:\ldots:w^{N-1}]\mapsto[w^0:\ldots:w^{N-1}:0]$ and let $i_1:\C P^N\hookrightarrow\C P^{N+1}: [w^0:\ldots:w^{N-1}]\mapsto[0:w^0:\ldots:w^{N-1}]$
and denote by $\tilde{i}_0:S^{2N+1}\rightarrow S^{2N+3}:z\mapsto(z,0)$ and $\tilde{i}_1:S^{2N+1}\rightarrow S^{2N+3}:z\mapsto(0,z)$ their lifts.
Observe that
 $\im(i_0)$ and $\im(i_1)$ are invariant under the gradient flow of $f_{N+1}$,
 $f_N=f_{N+1}\circ i_0=f_{N+1}\circ i_1+cst$ and  $i_1^{\star}\bar{g}_{N+1}=i_0^{\star}\bar{g}_{N+1}=\bar{g}_N$. We assume furthermore that
	 $H_{N+1}\bigl(\cdot,\cdot,\tilde{i}_1(z)\bigr) = H_{N+1}\bigl(\cdot,\cdot,\tilde{i}_0(z)\bigr)=H_N(\cdot,\cdot,z)$, and also
 that  $J_{N+1,\tilde{i}_1(z)}=J_{N+1,\tilde{i}_0(z)}=J_{N,z}$.
The critical points of $\A_{H_N+\tilde{f}_N}$ are pairs $(\gamma_z,z)$ where $z$ is a critical point of $\tilde{f}_N$ and where $\gamma_z$ is a $\phi_z$-translation of a $1$-periodic orbit $\gamma$ of $H'$ in $\widehat{W}$
(i.e $\gamma_z(\theta)=\gamma(\theta-\phi_z)$ which writes $\gamma_z=\phi_z\cdot\gamma$).
There is  thus a natural identification (with gradings)
\begin{align*}
	S{\widetilde{C}}_*^{S^1,N}(H_N,f_N) &\simeq \Z[u]/\raisebox{-1ex}{$u^{N+1}$}\otimes_{\Z}SC_*(H',J)\\
	S^1\cdot(\gamma_{z_j},z_j) &\mapsto u^j\otimes\gamma =: u^j\gamma 
\end{align*}
where $z_j$ is the chosen critical point of $-\tilde{f}_N$ of index $2j$ and $u$ is a formal variable of degree $2$.\\
The differential, under this identification of complexes, writes 
\begin{equation}\label{eq:diffS1}
	{\widetilde{\partial}}^{S^1}(u^l\otimes\gamma)=\sum_{j=0}^l u^{l-j}\otimes  \varphi_j(\gamma).
\end{equation}
for maps $
	\varphi_j:S{{C}}_*(H')\rightarrow S{{C}}_{*+2j-1}(H')$
defined by counting with signs the elements of the space $\Mod^{S^1}(S_{(\gamma^-_{z_j},z_j)},S_{(\gamma^+_{z_0},z_0)};H_N,f_N,J_N,g_N)$ which is the quotient by the $\R$ and the $S^1$-action of the space of solutions of 
\[	\left\{
	\begin{aligned}
		\partial_su+J^{\theta}_{z(s)}\circ u\bigl(\partial_{\theta}u-X_{H_{N,z(s)}}\circ u\bigr) &=0\\
		\dot{z}-\vec{\nabla}\tilde{f}_N(z) &=0
	\end{aligned}
	\right.
\]
 going from $S^1 \cdot(\gamma^-_{z_j},z_j)$ to $S^1 \cdot(\gamma^+_{z_0},z_0)$.\\
It follows from the assumptions  that for a fixed $j$, the maps $\varphi_j$ obtained for varying values of $N\geq j$ coincide.
Therefore  the limit as $N\rightarrow\infty$ of all the $S{\widetilde{C}}_*^{S^1,N}(H_N,f_N)$ is encoded into a complex denoted
\[
	S{\widehat{C}}_*^{S^1}(H') := \Z[u]\otimes_{\Z}SC_*(H')
\]
with differential induced  by \eqref{eq:diffS1} that we can formally write as
\[
	{\widehat{\partial}}^{S^1} = \varphi_0+u^{-1}\varphi_1+u^{-2}\varphi_2+\ldots
\]
As before, there are well-defined continuation maps induced by increasing homotopies of Hamiltonians.
\begin{proposition}\cite{bo}\label{shortS1} The $S^1$ equivariant homology of $W$ is given by:
\[
	SH_*^{S^1}(W) := \lim_{\substack{\longrightarrow \\ H\in\Hstd}}H\Bigl( S{\widehat{C}}_*^{S^1}(H'),{\widehat {\partial}}^{S^1} \Bigr).
\]
\end{proposition}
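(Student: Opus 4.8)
This is a repackaging of the direct-limit definition of $SH^{S^1}_*(W)$ in terms of the special Hamiltonians $H_N$ of Definition \ref{constrS1inv}, combined with two soft facts: homology commutes with filtered colimits, and filtered colimits commute. All of the analytic content sits in the chain-level bookkeeping recalled above and in \cite{bo}, so I would organise things as follows. \emph{Step 1 (finite-$N$ model).} Fix $H'\in\Hstd$ with nondegenerate $1$-periodic orbits, together with the data $H_N\in\Hs^{S^1,N}(f_N)$ and admissible $S^1$-invariant $J_N$ regular for $H_N$, as set up before the statement. One records the identification of graded groups
\[
	S\widetilde{C}_*^{S^1,N}(H_N,f_N)\;\simeq\;\bigl(\Z[u]/u^{N+1}\bigr)\otimes_\Z SC_*(H',J),\qquad S^1\!\cdot(\gamma_{z_j},z_j)\mapsto u^j\gamma ,
\]
valid because the critical points of $\A_{H_N+\tilde f_N}$ are precisely the circles through the pairs $(\phi_z\cdot\gamma,z)$, $z\in Crit(\tilde f_N)$, $\gamma\in\Per(H')$, and $\mu(S_{\gamma_{z_j},z_j})=-\mu_{CZ}(\gamma)+2j$ agrees with the degree of $u^j\gamma$. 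Counting the moduli spaces $\Mod^{S^1}$ produces the maps $\varphi_j:SC_*(H')\to SC_{*+2j-1}(H')$ and the formula \eqref{eq:diffS1} for $\widetilde\partial^{S^1}$, exactly as in \cite{bo}.

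\emph{Step 2 (stabilisation in $N$).} The crux is that the $S^1$-equivariant embedding $\tilde i_0:S^{2N+1}\hookrightarrow S^{2N+3}$, $z\mapsto(z,0)$, induces under Step 1 the tautological inclusion $\bigl(\Z[u]/u^{N+1}\bigr)\otimes SC_*(H')\hookrightarrow\bigl(\Z[u]/u^{N+2}\bigr)\otimes SC_*(H')$, $u^j\gamma\mapsto u^j\gamma$. For this one uses the compatibilities fixed before the statement: $\im(i_0)$ is invariant under the gradient flow of $f_{N+1}$, $f_N=f_{N+1}\circ i_0$, $i_0^\star\bar g_{N+1}=\bar g_N$, $H_{N+1}(\cdot,\cdot,\tilde i_0(z))=H_N(\cdot,\cdot,z)$ and $J_{N+1,\tilde i_0(z)}=J_{N,z}$. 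Invariance of $\im(\tilde i_0)$ under $\vec{\nabla}\tilde f_{N+1}$ forces the $z$-component of any solution of the parametrised system at level $N+1$ with asymptotics over $\im(\tilde i_0)$ to stay in $\im(\tilde i_0)$; the compatibility of $H_{N+1},J_{N+1}$ with $\tilde i_0$ then forces its $u$-component to solve the level-$N$ equation, and conversely each level-$N$ solution extends. This yields at once $\varphi_j^{(N)}=\varphi_j^{(N+1)}$ for $j\leq N$ (already noted) and the fact that the tautological inclusion is a chain map intertwining the truncated differentials and compatible with the continuation maps of increasing homotopies in $\Hstd$.

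\emph{Step 3 (pass to the limit).} As the transition maps of Step 2 are the tautological inclusions, $\varinjlim_N S\widetilde{C}_*^{S^1,N}(H_N,f_N)=\Z[u]\otimes_\Z SC_*(H')=S\widehat{C}_*^{S^1}(H')$, with induced differential the limit of \eqref{eq:diffS1}, written formally ${\widehat\partial}^{S^1}=\varphi_0+u^{-1}\varphi_1+u^{-2}\varphi_2+\cdots$. Homology commutes with filtered colimits of chain complexes, so $\varinjlim_N SH_*^{S^1,N}(H,f_N,J,g)=H_*\bigl(S\widehat{C}_*^{S^1}(H'),{\widehat\partial}^{S^1}\bigr)$. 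Finally the family $\{H_N\}$ built from $H'\in\Hstd$ is cofinal in $\Hs^{S^1,N}(f_N)$ up to continuation isomorphisms, and by Step 2 the continuation maps over $\Hstd$ commute with the $N$-stabilisation, so the colimit over $N$ may be interchanged with the colimit over $\Hstd$, giving $SH_*^{S^1}(W)=\varinjlim_{H'\in\Hstd}H_*\bigl(S\widehat{C}_*^{S^1}(H'),{\widehat\partial}^{S^1}\bigr)$.

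\emph{Main obstacle.} The one genuinely analytic point is the confinement statement in Step 2 — that parametrised solutions asymptotic to $\im(\tilde i_0)$ stay inside $\im(\tilde i_0)$ — which rests on the transversality and compactness for the spaces $\Mod^{S^1}$ proved in \cite{bo} together with the $\vec{\nabla}\tilde f_{N+1}$-invariance of $\im(\tilde i_0)$. Checking cofinality of $\{H_N\}$ and the precise identification of the continuation maps is lengthier but routine given \cite{bo}, and I would cite rather than reprove it.
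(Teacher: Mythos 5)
Your proposal is correct and follows essentially the route the paper takes: the text preceding the statement already records your Step 1 identification, the stabilisation of the $\varphi_j$ in $N$, and the limit complex $S{\widehat{C}}_*^{S^1}(H')$, and the paper then simply cites \cite{bo} for the proposition rather than spelling out a proof. The only soft spot, which you rightly flag and defer to \cite{bo}, is the confinement claim in Step 2 --- invariance of $\im(\tilde{i}_0)$ under the gradient flow by itself does not force connecting trajectories that are merely asymptotic to critical points over $\im(\tilde{i}_0)$ to stay inside it --- but this is precisely the analytic input the paper also outsources to \cite{bo}.
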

\begin{definition}[Perturbation of Morse-Bott Hamiltonians]\label{sec:perturbationMB} \cite{BOduke}
Let $(W,\lambda)$ be  a Liouville domain.
The set of Morse-Bott Hamiltonians $\mathcal{H}_{\textrm{MB}}$ \label{def:HMB} consists of Hamiltonians $H: \widehat{W}\rightarrow \R$ so that\\
	\indent 1. \label{Hmorse} $H_{\vert_{W}}$ is a negative $C^{2}$-small Morse function;\\
\indent  2. $H(p,\rho) = h(\rho)$ outside $W$, where $h$ is a strictly increasing function, and $h(\rho) = \beta e^{\rho} + \beta'$ for $\rho>\rho_{0}$, where $\beta,\beta'\in\R$ and $\beta \notin \Spec(M,\alpha)$,
			 and we assume that $h''-h'>0$ on $[0,\rho_{0})$.\\
The $1$-periodic orbit of $X_{H}$ are either critical points of $H$ in $W$ or non constant $1$-periodic orbits,  located on levels
$M\times\{\rho\}, \rho\in (0,\rho_{0})$, which are in correspondence with periodic $-R_{\alpha}$-orbits of period $e^\rho h'(\rho)$.
Since $H$ is autonomous, every $1$-periodic orbit, $\gamma_H$ of $X_{H}$, corresponding to the periodic Reeb orbit $\gamma$,  gives birth to a $S^{1}$ family of $1$-periodic orbits of $X_{H}$, denoted  by $S_{\gamma}$.\\
An element $H\in\Ham$ is deformed, as in \cite{CFHW},  into a time-dependent Hamiltonian $H_{\delta}$ with only non degenerate $1$-periodic orbits
in the following way.
We choose  a perfect Morse function on the circle, $\check{f}: S^1\rightarrow \R$.
For each  $1$-periodic orbit  $\gamma_{H}$ of $X_{H}$, we consider the integer $l_{\gamma_H}$ so that $\gamma_H$ is a $l_{\gamma_H}$-fold cover of a simple periodic orbit;
$l_{\gamma_H}$ is constant on $S_\gamma$ and
we set $l_{\gamma}=l_{\gamma_H}=\frac{1}{T}$ where $T$ is the period of $\gamma$.
We choose a symplectic trivialization $\psi := (\psi_{1},\psi_{2}): U_{\gamma} \rightarrow V\subset S^{1}\times\R^{2n-1}$
from $U_{\gamma}\subset\partial W\times \R^+\subset \widehat{W}$,open neighborhood  of the image of $\gamma_{H}$, to $V$,open neighborhood   of $S^{1}\times\{0\}$,
such that $\psi_{1}\bigl(\gamma_H(\theta)\bigr) = l_{\gamma}\theta$.
Here $S^{1}\times\R^{2n-1}$ is endowed with the standard symplectic form.
Let $\check{g}:S^{1}\times\R^{2n-1}\rightarrow[0,1]$ be a smooth cutoff function supported in a small neighborhood of $S^{1}\times\{0\}$
such that $\check{g}_{\vert_{S^{1}\times\{0\}}} \equiv 1$.
We denote by $\check{f}_\gamma$ the function defined on $S_{\gamma}$ by $\check{f}\circ \psi_{1\vert_{S_{\gamma}}}$.\\
For $\delta >0$ and $(\theta,p,\rho)\in S^{1}\times U_{\gamma}$, we define
\begin{equation}\label{eq:perturbationMB}
	H_{\delta}(\theta,p,\rho) := h(\rho) + \delta \check{g}\bigl(\psi(p,\rho)\bigr)\check{f}\bigl(\psi_{1}(p,\rho)-l_{\gamma}\theta\bigr).
\end{equation}
The Hamiltonian $H_{\delta}$ coincides with $H$ outside the open sets $S^{1}\times U_{\gamma}$.
\end{definition}
\begin{lemma}[\cite{CFHW,BOduke}]\label{lem:orbitesMB}
The $1$-periodic obits of $H_{\delta}$, for   $\delta$ small enough, are either
constant orbits (the same as those of $H$) or nonconstant orbits which are non degenerate and form pairs ($\gamma_{\m},\gamma_{\M}$) which  coincide with the orbits in $S_{\gamma}$ starting at the
minimum and the maximum of $\check{f}_{\gamma}$ respectively, for each Reeb orbit $\gamma$ such that $S_{\gamma}$
appears in the $1$-periodic orbits of $H$.
Their Conley-Zehnder index is given by $\mu_{CZ}(\gamma_{\m}) = \mu_{CZ}(\gamma)-1$ and $\mu_{CZ}(\gamma_{\M}) = \mu_{CZ}(\gamma)$.
\end{lemma}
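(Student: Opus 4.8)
The plan is to carry out the classical Morse--Bott perturbation argument of Cieliebak--Floer--Hofer--Wysocki \cite{CFHW} and Bourgeois--Oancea \cite{BOduke}, in three steps: localize the $1$-periodic orbits of $X_{H_\delta}$, perform a finite-dimensional reduction near each circle $S_\gamma$, and read off non-degeneracy together with the Conley--Zehnder indices.

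First I would show that, for $\delta$ small enough, every $1$-periodic orbit of $X_{H_\delta}$ is either a critical point of $H$ in $W$ or lies in one of the sets $S^1\times U_\gamma$ (the $U_\gamma$ chosen pairwise disjoint, which is possible since distinct Reeb periods sit at distinct levels $\rho$ by $h''-h'>0$). On $W$ the perturbation is switched off, so the constant orbits there are exactly the critical points of the $C^2$-small Morse function $H_{|W}$, which are non-degenerate; on $\widehat W\setminus W$ one has $dH=h'(\rho)\,d\rho\neq 0$ and the perturbation is $C^1$-small, so $dH_\delta\neq 0$ and no new constant orbit appears. Finally, if some sequence $\delta_k\to 0$ carried $1$-periodic orbits of $X_{H_{\delta_k}}$ not eventually contained in $\bigcup_\gamma S^1\times U_\gamma$, then, since $X_{H_{\delta_k}}\to X_H$ in $C^\infty$ on compact sets, a standard compactness argument for $1$-periodic orbits (all of which lie in the region $\rho\le\rho_0$) would produce a $1$-periodic orbit of $X_H$ meeting the complement of these neighbourhoods and not reducing to a constant, contradicting the description of $\Per(H)$ recalled in Definition \ref{sec:perturbationMB}.

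Next, fixing $\gamma$ with $S_\gamma\subset\Per(H)$, I would pass to the symplectic trivialization $\psi\colon U_\gamma\to V\subset S^1\times\R^{2n-1}$, in which $S_\gamma$ becomes a circle of $1$-periodic orbits of the autonomous Hamiltonian $h$. Under the standing hypotheses this circle is a Morse--Bott non-degenerate family of orbits: transversality in the contact directions is the non-degeneracy of the linearised Reeb return map, and transversality in the radial direction is ensured by $h''-h'>0$, which isolates the level carrying a given Reeb period. A Lyapunov--Schmidt reduction near $S_\gamma$ --- this is exactly what is established in \cite{CFHW, BOduke} --- identifies, for $\delta$ small, the $1$-periodic orbits of $X_{H_\delta}$ near $S_\gamma$ with the critical points of the restriction to $S_\gamma$ of the perturbing term; since the cutoff $\check g$ equals $1$ on $S^1\times\{0\}$, this restriction equals $\delta\,\check f_\gamma$ up to an $o(\delta)$ correction. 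As $\check f_\gamma$ is, by construction, a perfect Morse function on $S_\gamma\cong S^1$ (being $\check f$, perfect Morse on $S^1$, composed with a diffeomorphism), it has a single minimum and a single maximum, so exactly two orbits survive: the orbits $\gamma_{\m},\gamma_{\M}\in S_\gamma$ sitting over the minimum and the maximum of $\check f_\gamma$.

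It remains to verify non-degeneracy and to compute the indices. For $\delta$ small the Hessian of $\A_{H_\delta}$ at $\gamma_{\m}$ (resp. $\gamma_{\M}$) splits into the Hessian of $\A_H$ in the directions transverse to $S_\gamma$, invertible by Morse--Bott non-degeneracy, and the tangential direction, on which it equals $\delta$ times the Hessian of $\check f_\gamma$ at a non-degenerate critical point; hence both orbits are non-degenerate. For the Conley--Zehnder index one runs the usual spectral-flow comparison between the linearised flows of $X_{H_\delta}$ and $X_H$: resolving the one-dimensional kernel produced by the $S^1$-symmetry shifts $\mu_{CZ}$ by $-1+\mathrm{ind}_{\mathrm{Morse}}(\check f_\gamma)$ at the chosen critical point, which gives $\mu_{CZ}(\gamma_{\m})=\mu_{CZ}(\gamma)-1$ (Morse index $0$) and $\mu_{CZ}(\gamma_{\M})=\mu_{CZ}(\gamma)$ (Morse index $1$). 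I expect the second step to be the main obstacle --- putting $X_{H_\delta}$ in a normal form near the Morse--Bott circle $S_\gamma$ and showing that the reduced functional is governed, to leading order in $\delta$, by $\check f_\gamma$ --- but this is precisely what is carried out in detail in \cite{CFHW} and \cite{BOduke}, which I would cite rather than reprove.
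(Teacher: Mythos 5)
Your outline is correct and follows exactly the route of the sources the paper itself relies on: the paper states this lemma without proof, citing \cite{CFHW,BOduke}, and your three steps (localization of orbits near the circles $S_\gamma$, finite-dimensional reduction identifying the surviving orbits with the critical points of $\check f_\gamma$, and the non-degeneracy plus index shift $\mu_{CZ}=\mu_{CZ}(\gamma)-1+\mathrm{ind}_{\mathrm{Morse}}$) are precisely the argument carried out there. Since you defer the technical core (the normal form and reduction near $S_\gamma$) to those same references, your proposal matches the paper's treatment.
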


\subsection{Computing $SH^{S^1,+}$}\label{section:computS1}
We consider now the symplectic homologies with coefficients in $\Q$, denoted $SH^{\dagger}(W,\Q)$ on a Liouville domain $(W,\lambda)$.
 We consider a Hamiltonian denoted ${{H}}_{\delta,N}$ which is a  $S^1$-equivariant lift, as given by  formula \ref{eq:constr}, of a Hamiltonian  $ {H}_{\delta}$  which is a perturbation, as in fomula \ref{eq:perturbationMB}, of a 
 Hamiltonian $H$ in $\Ham$  such that the slope $a$ is big and   $\rho_0$ is small.
The non constant critical points of $\A_{H_{\delta, N}+\tilde{f}_N}$ are pairs $(\gamma_z,z)$ where $z$ is a critical point of $\tilde{f}_N$ and where $\gamma_z$ is a $\phi_z$-translation of a non constant $1$-periodic orbit $\gamma'$ of $H_{\delta}$ in $\widehat{W}$. Such a $\gamma'$   is of the form $\gamma_{\m}$ or  $\gamma_{\M}$,  located on a level
$M\times\{\rho\}, \rho\in (0,\rho_{0})$ corresponding to a periodic orbit of  $-R_{\alpha}$ of period $T=e^{\rho}h'(\rho)$. 
\begin{remark}\label{rem:action} 
	The action of this critical point $(\gamma_z,z)$ is given by \\$
		-\int_{S^1}\gamma_z^{\star}\widehat{\lambda}-\int_{S^1}(H_N+\tilde{f}_N)(\theta, \gamma_z(\theta),z)\, d\theta.$
	With our assumptions ($f$ small, $\rho_0$ small), the second term is close to zero.
	The first term is equal to $-\int_{S^1}(\gamma^{\prime} )^{\star}\widehat{\lambda}=e^{\rho}T$.
	Hence the action of this critical point is close to $T$.
	\end{remark}

We now prove {theorem} \ref{thm:computing}; $(W,\lambda)$ is a Liouville domain and  $\alpha$ a contact form on $\partial W$ such that the set of Conley-Zehnder indices of 
the set $\Per(R_{\alpha})$ of all good periodic Reeb orbits is lacunary.
We shall show that $$SH^{S^1,+}(W,\Q) = \bigoplus_{\gamma\in\Per(R_{\alpha})} \Q\langle\gamma\rangle.$$
\begin{proof} 
Let $H$ be a Hamiltonian in $\Ham$ such that the action is distinct for  $S^1$-families of orbits corresponding to Reeb orbits of different period.
This is possible by Remark \ref{rem:action}.
We consider, as mentioned above, the $S^1$-equivariant functions $H_{\delta,N}$ which are lifts of a  perturbation $H_{\delta}$ of $H$.
We use the natural identification, described in section \ref{section:specialcomplex}:
	\[
		S{\widetilde{C}}^{S^1,N,+}(H_{\delta,N},f_N) \simeq \Z[u]/\raisebox{-1ex}{$u^{N+1}$}\otimes S{{C}}^+(H_{\delta})
	\]
	and the description of $S{{C}}^+(H_{\delta})$ given by Lemma \ref{lem:orbitesMB}.
\begin{remark} \label{rmk:energy}
	The energy $E(u)=\int \norm{\partial_su}^2_{g_{J^{\theta}_{z(s)}}} dsd\theta$ of all Floer trajectories involved in the definition of the boundary operator which are linking elements $(\gamma_z,z)$ corresponding to distinct $\gamma$'s, say $\gamma_-$ and $\gamma_+$, is bounded below by some positive constant $E$ depending only on $H$.  	Indeed\footnote{this argument is borrowed from \cite{GG2}}, the result follows from the two following facts:\\
	First, $\norm{\partial_su}^2_{g_{J^{\theta}_{z(s)}}}$ is bounded above (\cite{Sal2}) since, otherwise, there would be some ``bubbling off'' which is prevented by exactness of the symplectic form.\\
	Secondly $\int_{S^1}\norm{ \partial_\theta u(s,\theta)-X_{H_{\delta,N_{z(s)}}}(u(s,\theta))}_{g_{J^{\theta}_{z(s)}}}^2d\theta$ is bounded below by an $\epsilon$ valid for any smooth loop $u(s,\cdot): S^1\rightarrow \widetilde{W} : \theta \mapsto u(s,\theta)$ with some values outside a neighborhood  of the critical orbits \cite[Exercice 1.22]{Sal}.
	This is proven by contradiction, using Arzela-Ascoli Theorem to prove that   every sequence $u_n:S^1\to\widetilde{W}$ with $\norm{\dot{u}_n(t)-X_t(u_n)}_{L^2}\mapsto 0$ has a subsequence which converges uniformly to a 1-periodic orbit of the Hamiltonian vector field.
\end{remark}

	The complex $S{\widetilde{C}}^{S^1,N,+}(H_{\delta,N},f_N)$ is filtered by the action thanks to Proposition \ref{prop:actionS1bo}.
	We take the filtration by the action, i.e. by the period ; we define $F_{p}S{\widetilde{C}}^{S^1,N,+}(H_{\delta,N},f_N), p\in\Z$ such that for every $p\in\Z$, the quotient $$F_{p+1}S{\widetilde{C}}^{S^1,N,+}(H_{\delta,N},f_N)/\raisebox{-0,5ex}{$F_pS{\widetilde{C}}^{S^1,N,+}(H_{\delta,N},f_N)$}$$
	is a union of sets
	\[
		\{1\otimes\gamma_{\M},\ldots,u^N\otimes\gamma_{\M},1\otimes\gamma_{\m},\ldots,u^N\otimes\gamma_{\m}\}
	\]
	corresponding to  underlying Reeb orbits $\gamma$ of the same period $T$.\\
	We consider the zero page of the associated spectral sequence.
	\[
		E^{0,N}_{p,q} := F_{p+1}S{\widetilde{C}}^{S^1,N,+}_{p+q}(H_{\delta,N},f_N)/\raisebox{-1ex}{$F_pS{\widetilde{C}}^{S^1,N,+}_{p+q}(H_{\delta,N},f_N)$}
	\]
	We have ``twin towers of generators'', one tower corresponding to each periodic Reeb orbit of period $T$ on $\partial W$,
	\[
		\begin{xymatrix}{
			u^N\otimes\gamma_{\M} \ar[rd]^{u^{-1}\varphi_1} & u^N\otimes\gamma_{\m} \ar[l]_{\varphi_0} \\
			\vdots & \vdots\\
			u^2\otimes\gamma_{\M} \ar[rd]^{u^{-1}\varphi_1} & u^2\otimes\gamma_{\m} \ar[l]_{\varphi_0} \\
			u\otimes\gamma_{\M} \ar[rd]^{u^{-1}\varphi_1} & u\otimes\gamma_{\m} \ar[l]_{\varphi_0} \\
			1\otimes\gamma_{\M} & 1\otimes\gamma_{\m} \ar[l]_{\varphi_0}
			}
		\end{xymatrix}
	\]
	with induced differential as in the above diagram with the notation of section \ref{section:specialcomplex}.
	The differential between two elements in distinct towers of the same period vanishes, since for any Floer trajectory involved in the differential and linking the two towers, by remarks 
	 \ref{rmk:action} and  \ref{rmk:energy},
	\begin{align*}
		E<\int \norm{\partial_su}^2_{g_{J^{\theta}_{z(s)}}} dsd\theta &\leq \A(\gamma^-,z^-)-\A(\gamma^+,z^+)\\
	\end{align*}
	and the last can be chosen to be less than $E$ (choosing small $\delta$ and $f_N$).
	
	To study any given tower, we use the explicit description of $\varphi_0$ and $\varphi_1$.
	It was first described by Bourgeois and Oancea but in their computation, they assumed transversality of contact homology.
	It was then computed without this assumption by Zhao.
	\begin{enumerate}
		\item\label{item:lem4.28}\cite[Proposition 6.2]{Zhao}, \cite[Lemma 4.28]{BOduke} Let $\gamma_{\m}$, $\gamma_{\M}$ and $H_{\delta}$ be as above.
			For $\delta$ small enough, the moduli space $\Mod(\gamma_{\m},\gamma_{\M};H_{\delta},J)/\raisebox{-0.5ex}{$\R$}$ consists of two elements;
			they have opposite signs, due to the choice of a system of coherent orientations, if and only if the underlying Reeb orbit $\gamma$ is good.
			This implies that,
			\[
				\varphi_0(\gamma_{\m})
				\begin{cases}
					0 &\textrm{if }\gamma\textrm{ is good,}\\
					\pm2\, \gamma_{\M} &\textrm{if }\gamma\textrm{ is bad}.
				\end{cases}
			\]
			Recall that a  Reeb orbit is called bad if its Conley-Zehnder index is not of the same parity as the Conley-Zehnder index of the simple Reeb orbit with same image, and
			an orbit $\gamma_H$ is bad if the underlying Reeb orbit is bad.
		\item\label{item:lem3.3}\cite[Proposition 6.2]{Zhao}, \cite[Lemma 3.3]{bo} With the same notations, the map $\varphi_1: S{{C}}^+_*(H_{\delta})\rightarrow S{{C}}^+_{*+1}(H_{\delta})$ acts by
			\[
				\varphi_1(\gamma_{\M})=
				\begin{cases}
					k_{\gamma}\gamma_{\m} &\textrm{if }\gamma\textrm{ is good,}\\
					0 &\textrm{if }\gamma\textrm{ is bad}
				\end{cases}
			\]
			where $k_{\gamma}$ is the multiplicity of the underlying Reeb orbit $\gamma$ i.e. $\gamma$ is a $k_{\gamma}$-fold cover of a simple periodic Reeb orbit.
	\end{enumerate}
	 The complex in $E^{0;N}_{p,q}$ defined by  the twin tower corresponding to a good orbit yields
	\[
		\begin{xymatrix}{
			\Q \ar[r]^0&\Q \ar[r]^{(\times k_{\gamma})}&\hdots\ar[r]^{(\times k_{\gamma})}&
			\Q \ar[r]^0&\Q \ar[r]^{(\times k_{\gamma})}&\Q \ar[r]^0&
			\Q}
		\end{xymatrix}
	\]
	and thus, in the homology $E^{1;N}_{p,q}$, it gives one copy of $\Q$ in degree $-\mu_{CZ}(\gamma)$ and one copy of $\Q$ in degree $-\mu_{CZ}(\gamma)+2N$.
	The first page is given by
	\[
		E^{1;N} = \bigoplus_{\gamma\in\Per(H_{\delta})}\Q\langle\gamma_{\M}\rangle\oplus\Q\langle u^N\otimes\gamma_{\m}\rangle.
	\]
	There are no bad orbits in the generators of the $S^1$-equivariant symplectic homology. Indeed
	the complex  in $E^{0;N}_{p,q}$ defined by  the twin tower over a bad orbit is :
		\[
		\begin{xymatrix}{
			\Q\ar[r]^{\times(\pm2)}&\Q\ar[r]^{0}&\hdots\ar[r]^{0}&\Q\ar[r]^{\times(\pm2)}&\Q\ar[r]^{0}&\Q\ar[r]^{\times(\pm2)}&\Q}
		\end{xymatrix}
	\]
	and the corresponding homology gives  $0$ in  $E^{1;N}_{p,q}$.
	
	The differential on the first page of the spectral sequence vanishes because of the lacunarity of the set of Conley-Zehnder indices; therefore, for $N$ large enough, it gives the homology :
	\[
		SH^{S^1,N,+}(H_{\delta,N}) = \bigoplus_{\gamma\in\Per(H_{\delta})}\Q\langle\gamma_{\M}\rangle\oplus\Q\langle u^N\otimes\gamma_{\m}\rangle.
	\]
	The morphism induced by a regular homotopy between two such Hamiltonians (built from standard Hamiltonians close to Morse Bott Hamiltonians) respects the filtration, thanks to proposition \ref{prop:actionS1bo}.
	We can therefore take the direct limit on  the pages over those Hamiltonians which form a cofinal family.
	The inclusion  $S^{2N+1}\hookrightarrow S^{2N+3}$  induces a map
	\[
		E^{1;N} = \bigoplus_{\gamma\in\Per(R_{\alpha})}\Q\langle\gamma_{\M}\rangle\oplus\Q\langle u^N\otimes\gamma_{\m}\rangle \rightarrow  E^{1;N+1}.
	\]
	which is the identity on the first factor and zero on the second factor.	
	Taking the direct limit over the inclusion $S^{2N+1}\hookrightarrow S^{2N+3}$ we have\\

		 ${\displaystyle{\qquad\qquad\qquad SH^{S^1,+}(W;\Q)= \lim_{\substack{\longrightarrow\\ N}} E^{1;N} = \bigoplus_{\gamma\in\Per(R_{\alpha})}\Q\langle\gamma\rangle.}}
	$
	 \end{proof}
\begin{remark}
	Stricto sensu, in the proof of the above Theorem, we have assumed that the orbits are contractible.
	Nonetheless Theorem \ref{thm:computing} is true after extending the definition of $SH^{S^1,+}(H)$ to all $1$-periodic orbits of $H$.
	To deal with non contractible orbits, one chooses for any free homotopy class of loops $a$, a representative $l_a$ and one chooses a trivialisation of the tangent space along that curve.
For the free homotopy class of  a contractible loop, $l_0$ is chosen to be constant loop with constant trivialisation.
One ask moreover that $l_{a^{-1}}$ is $l_a$ in the reverse order and with the corresponding trivialisation.
The action functional induced by a Hamiltonian $H$ becomes
\[
	\A(\gamma) := -\int_{[0,1]\times S^1}u^{\star}\w-\int_{S^1}H(\theta,\gamma(\theta))d\theta
\]
where $u:[0,1]\times S^1\rightarrow W$ is a homotopy from $l_a$ to $\gamma$.
For any loop $\gamma$ belonging to the free homotopy class $a$, one chooses a homotopy $u:[0,1]\times S^1\rightarrow W$ from $l_a$ to $\gamma$ and one considers the trivialisation of $TW$ on $\gamma$ induced by $u$ and
by the choice of the trivialisation along $l_a$.
Let us observe that any Floer trajectory can only link two orbits in the same free homotopy class and as before, the action decreases along Floer trajectories.
As before, the Floer complex is generated by the $1$-periodic orbits  of $H$ graded by minus their Conley-Zehnder index.
The differential ``counts'' Floer trajectories between two orbits whose difference of grading is $1$.
The positive version of symplectic homology is defined as before since the set of critical points of $H$ is still a subcomplex : Floer trajectories can only link a critical point to a contractible orbit.
All the results stated above extend to this framework.
\end{remark}

\begin{cor}\label{cor:complexSHS1+}
	The only generators that may appear in the positive $S^1$-equivariant homology are of the form $u^0\otimes\gamma_{\M}$ with $\gamma_{\M}$ a good orbit.
\end{cor}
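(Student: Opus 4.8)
This is a direct consequence of the computation carried out in the proof of Theorem \ref{thm:computing}; the key observation is that the description of the zero and first pages of the spectral sequence there did not use the lacunarity hypothesis. The plan is to revisit that spectral sequence and isolate exactly which generators can survive to homology.

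First I would recall the setup: for the Hamiltonian $H_{\delta,N}$, an $S^1$-equivariant lift of a small perturbation of a Morse--Bott Hamiltonian in $\Ham$ with large slope and small $\rho_0$, the complex
\[
	S{\widetilde{C}}^{S^1,N,+}(H_{\delta,N},f_N)\simeq\Z[u]/u^{N+1}\otimes S{{C}}^+(H_{\delta})
\]
is filtered by the action, equivalently by the period of the underlying Reeb orbit (Remark \ref{rem:action}), and the zero page $E^{0,N}$ of the associated spectral sequence decomposes into ``twin towers'', one for each periodic Reeb orbit $\gamma$ on $\partial W$, with generators $u^j\otimes\gamma_{\M}$ and $u^j\otimes\gamma_{\m}$ for $0\le j\le N$, the induced differential being governed by the maps $\varphi_0,\varphi_1$ described in items \ref{item:lem4.28} and \ref{item:lem3.3}. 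Then I would compute the first page tower by tower: over a bad orbit the tower complex is $\Q\xrightarrow{\pm2}\Q\xrightarrow{0}\cdots\xrightarrow{\pm2}\Q$, whose homology vanishes; over a good orbit it is $\Q\xrightarrow{0}\Q\xrightarrow{\times k_{\gamma}}\cdots\xrightarrow{\times k_{\gamma}}\Q\xrightarrow{0}\Q$, whose homology is one copy of $\Q$ generated by $u^0\otimes\gamma_{\M}$ together with one copy of $\Q$ generated by $u^N\otimes\gamma_{\m}$. Hence on $E^{1,N}$ no generator over a bad orbit survives, and over a good orbit only $u^0\otimes\gamma_{\M}$ and $u^N\otimes\gamma_{\m}$ survive.

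Finally I would take the direct limit over $N$ along the $S^1$-equivariant inclusions $S^{2N+1}\hookrightarrow S^{2N+3}$: as already observed in the proof of Theorem \ref{thm:computing}, the induced map $E^{1,N}\to E^{1,N+1}$ is the identity on each $\Q\langle u^0\otimes\gamma_{\M}\rangle$ and zero on each $\Q\langle u^N\otimes\gamma_{\m}\rangle$, so in the limit only the generators $u^0\otimes\gamma_{\M}$ with $\gamma$ good remain on the first page. Since each later page is a subquotient of the first one, and since this persists after passing to the limits over $N$ and over the cofinal family of Hamiltonians (handled exactly as in Theorem \ref{thm:computing}), every generator that appears in $SH^{S^1,+}(W)$ must be of the form $u^0\otimes\gamma_{\M}$ with $\gamma_{\M}$ a good orbit. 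The only point deserving a word of care — and the reason the statement only says such generators ``may'' appear — is that, without the lacunarity hypothesis, the higher differentials of the spectral sequence can annihilate some of these surviving generators; what they cannot do is produce new ones, and that is all that is claimed. This last point is essentially immediate once the previous two steps are in place, so I do not expect a genuine obstacle here: the corollary is really just a reading-off of the first page.
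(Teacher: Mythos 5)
Your argument is correct and is exactly the reasoning the paper intends: the corollary is read off from the proof of Theorem \ref{thm:computing}, since the action filtration, the tower-by-tower computation of $E^{1,N}$, and the vanishing of the $u^N\otimes\gamma_{\m}$ classes in the limit over $N$ all hold without lacunarity, which enters only in killing the higher differentials. Your closing remark that later pages are subquotients of the first page, so no new generators can be created (only destroyed), is precisely why the statement is phrased with ``may appear''.
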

\begin{cor}
	The number of good periodic Reeb orbits of periods $\leq T$ is bounded below by the rank of the positive $S^1$-equivariant symplectic homology of action $\leq T$.
\end{cor}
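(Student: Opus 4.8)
The plan is to produce a chain-level model of the action-truncated homology whose generators of action $\le T$ are essentially in bijection with the good periodic Reeb orbits of period $\le T$, and then bound the rank of the homology by the number of generators.

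First I would make precise the ``positive $S^1$-equivariant symplectic homology of action $\le T$''. Fix a slope $a>T$ with $a\notin\Spec(\partial W,\alpha)$ and work with the perturbed Morse--Bott Hamiltonians $H_{\delta,N}$ used in the proof of Theorem~\ref{thm:computing} (lifts of perturbations of elements of $\Ham$). By Proposition~\ref{prop:actionS1bo} both the differential ${\widetilde{\partial}}^{S^1}$ and the continuation maps coming from increasing homotopies decrease the parametrized action, so the span $F_{\le T}S\widetilde{C}^{S^1,N,+}(H_{\delta,N},f_N)$ of the critical circles of action $\le T$ is a subcomplex, preserved by continuation. I would then define $SH^{S^1,+,\le T}(W,\Q)$ as the direct limit over $N$ and over this cofinal family of Hamiltonians of the homologies of these subcomplexes; there is an obvious map $SH^{S^1,+,\le T}(W,\Q)\to SH^{S^1,+}(W,\Q)$, and this truncation is the object in the statement.

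Next I would bound the rank. Each $F_{\le T}S\widetilde{C}^{S^1,N,+}(H_{\delta,N},f_N)$ is finitely generated and free over $\Q$, so its homology has rank at most the number of generators. Running the action spectral sequence exactly as in the proof of Theorem~\ref{thm:computing}, but restricted to generators of action $\le T$, the twin-tower description of $\varphi_0$ and $\varphi_1$ (the Bourgeois--Oancea computation of \cite{bo,BOduke} as corrected by Zhao \cite{Zhao}) shows that on the $E^1$-page the only classes of action $\le T$ that survive are $u^0\otimes\gamma_{\M}$ with $\gamma$ a \emph{good} periodic Reeb orbit: bad orbits contribute $0$, and the classes $u^N\otimes\gamma_{\m}$ disappear in the limit $N\to\infty$, just as in Corollary~\ref{cor:complexSHS1+}. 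Since all later pages are subquotients, for $N$ large the rank of $H_*\bigl(F_{\le T}S\widetilde{C}^{S^1,N,+}(H_{\delta,N},f_N)\bigr)$, and hence of $SH^{S^1,+,\le T}(W,\Q)$ (a direct limit cannot increase rank), is at most the number of such surviving generators. By Remark~\ref{rem:action} the action of $u^0\otimes\gamma_{\M}$ equals the period of the underlying Reeb orbit up to an error which can be made arbitrarily small by shrinking $\delta$, $f_N$ and $\rho_0$; since $\Spec(\partial W,\alpha)$ is nowhere dense, for $T\notin\Spec(\partial W,\alpha)$ and a small enough error the surviving generators of action $\le T$ are in bijection with the good periodic Reeb orbits of period $\le T$, which gives the claimed inequality. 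The case $T\in\Spec(\partial W,\alpha)$ then follows by a one-sided limit $T'\uparrow T$ (or $T'\downarrow T$, according to the convention chosen for truncation at a critical value).

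The main obstacle I expect is the bookkeeping for the two nested direct limits: one must check that the identification of action-$\le T$ generators with good Reeb orbits of period $\le T$ is uniform in $N$ and stable along the cofinal family of perturbed Morse--Bott Hamiltonians, so that the chain-level rank bound genuinely survives passage to the limit, and that the small discrepancy between the Hamiltonian action and the Reeb period never pushes a generator across the threshold $T$ --- which is exactly why one first restricts to $T$ outside the action spectrum and only afterwards takes the one-sided limit.
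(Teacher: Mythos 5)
Your overall strategy is the one the paper intends: truncate the complex by the action (legitimate by Proposition \ref{prop:actionS1bo}), identify the action of a generator with the period of the underlying Reeb orbit via Remark \ref{rem:action}, run the twin-tower analysis of the proof of Theorem \ref{thm:computing} together with Corollary \ref{cor:complexSHS1+}, and bound the rank of the truncated homology by the number of surviving generators; your handling of $T\notin\Spec(\partial W,\alpha)$ and of the small action--period discrepancy is also the right way to set this up.

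There is, however, a genuine gap in the key rank estimate. You assert that ``for $N$ large'' the rank of $H_*\bigl(F_{\le T}S\widetilde{C}^{S^1,N,+}(H_{\delta,N},f_N)\bigr)$ is at most the number of good orbits of period $\le T$. That is false at every finite $N$: the classes $u^N\otimes\gamma_{\m}$ have action essentially equal to the period of $\gamma$, so they are not removed by the truncation, and indeed in the lacunary case the paper computes $SH^{S^1,N,+}(H_{\delta,N})=\bigoplus_{\gamma}\Q\langle\gamma_{\M}\rangle\oplus\Q\langle u^N\otimes\gamma_{\m}\rangle$, which has twice your claimed rank. Hence ``a direct limit cannot increase rank'' only gives twice the desired bound; the drop to one class per good orbit occurs only through the stabilization maps $S^{2N+1}\hookrightarrow S^{2N+3}$, and what you actually know about them (identity on $\gamma_{\M}$, zero on $u^N\otimes\gamma_{\m}$) is a statement on the $E^1$-page. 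Without lacunarity --- and the corollary is used in exactly that generality, e.g. for $\widetilde{\Sigma}$ in Theorem \ref{ekeland} --- the $E^1$-page is not the homology, and vanishing of a map on the associated graded does not imply the needed vanishing (or rank drop) for the induced map on homology. The clean repair is to pass to the limit over $N$ \emph{before} taking homology: homology commutes with this direct limit, so one may work with the action-truncated complex $\Z[u]\otimes_{\Z}SC^{+}(H_{\delta})$ of Section \ref{section:specialcomplex} (Proposition \ref{shortS1}). In that model each good tower has homology $\Q\langle u^0\otimes\gamma_{\M}\rangle$ only (every $u^j\otimes\gamma_{\m}$ is hit by $u^{j+1}\otimes\gamma_{\M}$ up to the nonzero factor $k_\gamma$), and each bad tower is acyclic over $\Q$; so the $E^1$-page of the action spectral sequence already has rank equal to the number of good Reeb orbits of period $\le T$, for every Hamiltonian in the cofinal family. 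Since that count is independent of the Hamiltonian, the colimit over Hamiltonians preserves the bound, and the rest of your argument (action versus period, $T$ outside the spectrum, then a one-sided limit) goes through unchanged.
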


\section{Structural properties of symplectic homology}\label{chapter:properties}

\subsection{Transfer morphism for symplectic homology}\label{section:transfer}
In this section, we prove that symplectic homology, positive symplectic homology, $S^1$-equivariant symplectic homology and positive $S^1$-equivariant symplectic homology are functors (reversing the arrows) defined on the category where objects are Liouville domains, and morphisms are embeddings.
Precisely, we  construct  a  morphism between the ($S^1$- equivariant positive) symplectic homologies when one Liouville domain is embedded in another one, and we show that those morphisms compose nicely.
Such a morphism, called a transfer morphism, has been studied by Viterbo \cite{V} in the case of the symplectic homology. We adapt his construction to extend it to all the variants of the symplectic homology
considered above.
We consider a {{Liouville domain}} $(W,\lambda)$ and
its completion $\widehat{W}=W\cup (\partial W\times \R^+)$ 
built from the flow of the Liouville vector field $X$ as in definition \ref{def:setup}.
We denote by $\widehat{\lambda}$ the $1$-form on $\widehat{W}$ defined by $\lambda$ on $W$ and by $e^\rho\alpha$ on $\partial W\times \R^+$ with $\alpha:= \lambda_{\vert_{\partial V}}$.
We  denote by $SH^\dagger (W,\lambda)$ its symplectic homology $SH^\dagger (W,d\lambda,X)$.
\begin{definition}
	Let $(V,\lambda_V)$ and $(W,\lambda_W)$ be two Liouville domains.
	A \emph{Liouville embedding} $j:(V,\lambda_V)\rightarrow (W,\lambda_W)$ is a symplectic embedding $j:V\rightarrow W$ with $V$ and $W$ of codimension $0$ such that  
	$j^{\star}\lambda_W=\lambda_V$.
(One can consider, more generally, a symplectic  embedding $j$ of codimension $0$ such that  $\lambda_W$ coincides in a neighbourhood of $j(\partial V)$ in $W$ with $\widehat{\lambda_V} + df$.)\end{definition}
To construct  transfer morphisms, we introduce a special class of Hamiltonians $\Hs_{stair}(V,W)$ and we use, as in \cite{V},  increasing homotopies between $H_1 : S^1\times \widehat{W}\rightarrow\R \in \Hstd(W)$ and an $H_2:S^1\times \widehat{W}\rightarrow\R$ in  $\Hs_{stair}(V,W)$.

\begin{definition}\label{def:Hstair}
We fix a neighbourhood $U$ of $\partial V$ in $W\setminus \mathring{V}$
so that $(U,\omega_W)$ is symplectomorphic to $\bigl(\partial V\times[0,\delta], d(e^{\rho}\alpha_V)\bigr)$.\\
A Hamiltonian $H_{2} : S^1\times \widehat{W}\rightarrow\R$  is in $\Hs_{stair}(V,W)$ if and only if
	\begin{itemize}
		\item on $S^{1}\times V$, $H_2$ is negative and $C^{2}$-small  ;
		\item on $S^1\times U\cong S^1\times \partial V\times[0,\delta]$, with $\rho$ the last coordinate, $H_2$ is of the following form
		\begin{itemize}
			\item there exists $0<\rho_{0}\ll\delta $ such that $H_2(\theta, p, \rho) = \beta e^{\rho} + \beta' \textrm{ for } \rho_{0}\leq\rho\leq \delta-\rho_{0}$, with
				$0<\beta\notin\Spec(\partial V,\alpha)\cup\Spec(\partial W,\alpha)$  and $\beta'\in\R$;
			\item $H_2(\theta, p, \rho)$ is $C^{2}$-close on $S^{1}\times \partial V\times[0,\rho_{0}]$ to a  convex increasing function of $e^\rho$ which is independent of $\theta$ and $p$;
			\item $H_2(\theta, p, \rho)$ is $C^{2}$-close on $S^{1}\times \partial V\times[\delta-\rho_{0},\delta]$ to a concave increasing function of $e^\rho$ which is independent of $\theta$ and $p$;
		\end{itemize}
		\item on $S^{1}\times W\setminus(V\cup U)$, $H_2$ is $C^{2}$-close to a constant ;
		\item on $S^1\times  \partial W\times[0,+\infty[$, with $\rho'$ the $\R^+$ coordinate on $\partial W\times\R^+$, $H_2$ is of the following form
		\begin{itemize}
			\item there exists $\rho'_{1} > 0$  such that $H_2(\theta, p, \rho') = \mu e^{\rho'} + \mu' \textrm{ for } \rho'\geq\rho'_{1}$, with
				$0<\mu\notin\Spec(\partial V,\alpha)\cup\Spec(\partial W,\alpha)$, $\mu<\frac{\beta(e^{\delta}-1)}{e^\delta}$, $\mu'\in\R$;
			\item $H_2(\theta, p, \rho')$ is $C^{2}$-close on $S^{1}\times \partial W\times]0,\rho'_{1}]$ to a concave increasing function of $e^{\rho'}$  which is independent of $\theta$ and $p$;
		\end{itemize}
		\item all $1$-periodic orbits of $X^{\theta}_{H_2}$ are non-degenerate, i.e the Poincar\'e return map has no eigenvalue equal to $1$.
	\end{itemize}
\end{definition}
\noindent A representation of $H_2$ is given in Figure \ref{hamiltoniens}.
\begin{figure}
	\hspace{3cm}
	\def\svgwidth{1\textwidth}
	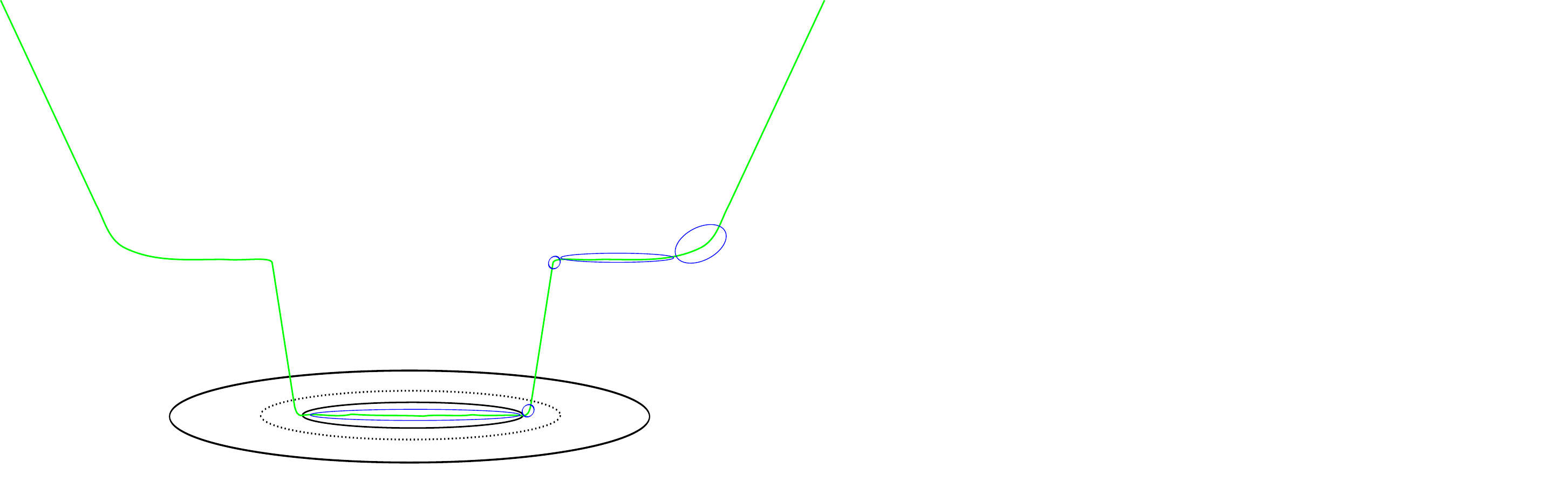
	\caption{Example of $H_2$ on $\widehat{W}$}\label{hamiltoniens}
\end{figure}

The $1$-periodic orbits of $H_2$ lie either in the interior $\mathring{V}$ (which we call region \rom{1}), either in $\partial V\times[0,\rho_0]$ (region \rom{2}),
either in $\partial V\times[\delta-\rho_0,\delta]$ (region \rom{3}), either in $W\setminus(V\cup U)$ (region \rom{4}) or in $\partial W\times[0,\rho_1]$ (region \rom{5}).
We consider their action (using the obvious fact that
	if $H$ and $\widetilde{H}$ are two $C^2$-close Hamiltonians and if $\gamma\in\Per(H)$ and $\widetilde{\gamma}\in\Per(\widetilde{H})$ are $C^2$-close,
	then $\A(\gamma) \textrm{ is close to } \A(\widetilde{\gamma}).$)
\begin{itemize}
	\item[\textbf{\rom{1}}] {In region \rom{1}}, there are only critical points so the action of the critical point $q$ is non negative and small ($<\epsilon$).
	\item[\textbf{\rom{2}}] {In region \rom{2}}, $H_2$ is $C^2$-close to a  convex function $H=h(r)$ (with $r=e^{\rho}$); since
	$\omega_W =d(r\alpha_V)$, we have $X_H=-h'(r)R_{\alpha_V}$ where $R_{\alpha_V}$ is the Reeb vector field on $\partial V$ associated to the contact form $\alpha_V={\lambda_V}_{|_{\partial V}}$.
		An orbit of $X_H$ lies on a constant level for $r$ and its action is given by:
	\begin{eqnarray*}
			\A(\gamma) &=& -\int_{S^1}\gamma^{\star}(r\alpha_V)-\int_{S^1}H\bigl(\gamma(\theta)\bigr)d\theta=-\int_{S^1}r\alpha_V\bigl(-h'(r)R_{\alpha_V}\bigr)-h(r)\\
			&=&h'(r)r-h(r).
	\end{eqnarray*}
		Since $\rho_0$ is small we have $e^{\rho_0}\sim 1$ and $ h(e^{\rho_0})\sim 0$, so the actions of $1$-periodic orbits of $H_2$ in this region are close to the periods of closed orbits of the Reeb vector field on the boundary of $V$ of periods $T<\beta$ and they are greater than $\epsilon$.
	\item[\textbf{\rom{3}}] {In region \rom{3}}, the computation is similar to the case of region \rom{2}:
		$ \A(\gamma_{H_2})$ is equal to $h'(r)r-h(r)$ which  is less than  {$e^{\delta}\beta-(e^{\delta}\beta-\beta)=\beta$}\footnote{The author is grateful to Oleg Lazarev who pointed out a mistake in an earlier computation}.
	\item[\textbf{\rom{4}}] {In region \rom{4}}, there are only critical points so the action of the critical point $q$ is given by  $-H_2(q)$ which is close to {$- (e^{\delta}\beta-\beta)$}.
	\item[\textbf{\rom{5}}] {In region \rom{5}}, the computation of the action is similar to the case of region \rom{2}:
		$\A(\gamma)$ is close to $h'(r)r-h(r)$  with $ r=e^{\rho'}$.
		Observe that here the $1$-periodic orbits are close to $1$-periodic orbits of $-h'(r)R_{\alpha_W}$ where now $R_{\alpha_W}$ is the Reeb vector field on $\partial W$.
		The action of any $1$-periodic orbit of $H_2$ in this region is close to $e^{\rho'} T'-h(e^{\rho'})$ where $T'$ is the period of a closed orbit of the Reeb vector field on the boundary of $W$ with $T'<\mu$ and {thus the action is $<\mu e^{\delta}-\beta(e^{\delta}-1)<0$}.

\end{itemize}
So, for nice parameters (for instance $\rho'_1<\delta$), we have
$$
	\A(\rom{4})<\A(\rom{5})<0<\A(\rom{1})<\epsilon<\A(\rom{2}).
$$
{There are no Floer trajectories from {\rom{3}} to \rom{1} or \rom{2} by \cite[Lemma 2.3]{CO}.}
We denote by $C^{\rom{4}, \rom{3}, \rom{5}, \rom{1}}(H_2,J)$ the subcomplex of the Floer complex for $H_2$ generated by critical orbits lying in regions \rom{4}, \rom{3}, \rom{5}, and \rom{1} and by  $C^{\rom{4}, \rom{3}, \rom{5}}(H_2,J)$  the subcomplex of the Floer complex for $H_2$ generated by critical orbits lying in regions \rom{4}, \rom{3} and  \rom{5}.
We have the identifications:
\begin{eqnarray*}
	C^{\rom{1},\rom{2}}(H_2,J) &=& C^{\rom{4}, \rom{3}, \rom{5}, \rom{1}, \rom{2}}(H_2,J)/\raisebox{-1ex}{$C^{\rom{4}, \rom{3}, \rom{5}}(H_2,J)$}\\
	C^{\rom{2}}(H_2,J) &=& C^{\rom{4}, \rom{3}, \rom{5}, \rom{1}, \rom{2}}(H_2,J)/\raisebox{-1ex}{$C^{\rom{4}, \rom{3}, \rom{5}, \rom{1}}(H_2,J)$}\\
\end{eqnarray*}
We have subcomplexes using the fact that the action decreases along Floer trajectories, {and using \cite[Lemma 2.3]{CO}.}
The Floer differential passes to the quotient where we still denote it $\partial$.
Remark that the function $H_2$ is not  in $\Hstd(V)$. We want to relate the homology of $\bigl(C^{\rom{1},\rom{2}}(H_2,J),\partial\bigr)$   to the homology of a function in $\Hstd(V)$.
\begin{definition}
Let $H_2\in\Hs_{stair}(V,W)$; we denote by   $\beta$ the slope of the linear part close to $\partial V$, as in Definition \ref{def:Hstair}. We  associate to $H_2$ the  function denoted  $H=\iota_V(H_2) \in \Hstd(V)$, defined on  $S^1\times \widehat{V}$,
 which coincides with $H_2$ on $V\cup (\partial V\times [0,\delta-\rho_0])$ and which is linear with slope $\beta$ ``further''  in the completion: $H(\theta, e^\rho)=\beta \rho+ \beta'$ for all $\rho\geq \delta-\rho_0$.
\end{definition}
\begin{proposition}\label{prop:homCII}
	Let $H_2$ be an function in $\Hs_{stair}$ and let $H=\iota_V H_2$ be the associated function in  $\Hstd(V)$ as defined above. 	We assume furthermore that the Hamiltonians are generic in the sense that the homologies are well-defined for a good choice of $J$'s.
	Then
	$$
		H\bigl(C^{\rom{1},\rom{2}}(H_2,J),\partial\bigr)=H\bigl(SC(H,J)\bigr)\, \textrm{and}\, H\bigl(C^{\rom{2}}(H_2,J),\partial\bigr)= H\bigl(SC^{+}(H,J)\bigr).
	$$
\end{proposition}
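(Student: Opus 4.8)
The plan is to compare, step by step, the Floer complex $C^{\rom{1},\rom{2}}(H_2,J)$ on $\widehat W$ with the Floer complex $SC(H,J)$ on $\widehat V$ for $H=\iota_V H_2$, and to see that they compute the same homology. The two Hamiltonians agree on $V\cup(\partial V\times[0,\delta-\rho_0])$, which is precisely the region where all the $1$-periodic orbits contributing to $C^{\rom{1},\rom{2}}$ (i.e.\ those in regions \rom{1} and \rom{2}) live; on the other hand, the $1$-periodic orbits of $H$ in $\widehat V$ consist exactly of the constants in $V$ (region \rom{1}) together with the nonconstant orbits in $\partial V\times[0,\rho_0]$ (region \rom{2}) and nothing else, since $H$ has constant slope $\beta\notin\Spec(\partial V,\alpha_V)$ from $\rho\geq\delta-\rho_0$ onward. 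So there is a natural bijection between the generators of the two complexes, preserving the grading (minus the Conley--Zehnder index), and one only has to check that the differentials agree under this bijection.

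First I would set up the generator identification carefully: a $1$-periodic orbit of $X_{H}$ on $\widehat V$ is either a critical point of $H$ in $V$, or a nonconstant orbit sitting on a level $\partial V\times\{\rho\}$ with $\rho\le\rho_0$, corresponding to a Reeb orbit of period $T=h'(e^\rho)<\beta$; by the discussion preceding the Proposition these are exactly the orbits generating $C^{\rom{1},\rom{2}}(H_2,J)$. Next I would match the differentials: a Floer trajectory for $H$ on $\widehat V$ between two such orbits is, by the maximum principle (the almost complex structures are of contact type near infinity, so $\rho\mapsto$ the $\rho$-coordinate of a solution has no interior maximum where $H$ is linear), confined to the region $V\cup(\partial V\times[0,\delta-\rho_0])$; there one can choose $J$ equal to a fixed admissible $J$ for $H_2$, and the Floer equation is literally the same. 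Conversely, a Floer trajectory for $H_2$ on $\widehat W$ between two orbits in regions \rom{1}$\cup$\rom{2} stays, again by the action filtration together with the confinement lemma \cite[Lemma 2.3]{CO}, inside $V\cup(\partial V\times[0,\delta-\rho_0])$ and hence matches a trajectory for $H$. This gives a chain isomorphism $C^{\rom{1},\rom{2}}(H_2,J)\cong SC(H,J)$, proving the first identity; the second identity follows by passing to the respective quotients by the subcomplexes generated by critical points in $V$ of action $\le\epsilon$ (the definition of $SC^+$), since under the isomorphism the subcomplex $C^{\rom{1}}(H_2,J)$ of region-\rom{1} generators corresponds exactly to $SC^{\le\epsilon}(H,J)$.

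The main obstacle is the confinement argument: one must be sure that no Floer trajectory of $H_2$ which starts and ends in regions \rom{1} or \rom{2} wanders out through region \rom{3} (the concave ``cap'' near $\rho=\delta$) into regions \rom{4} and \rom{5}. The action ordering $\A(\rom{4})<\A(\rom{5})<0<\A(\rom{1})<\epsilon<\A(\rom{2})$ rules out trajectories \emph{asymptotic} to orbits in \rom{4},\rom{5} at $+\infty$, but one still needs the stronger statement that a trajectory asymptotic to region \rom{1},\rom{2} orbits at both ends cannot pass through the higher region; this is exactly where the maximum principle on the concave part and the no-escape lemma \cite[Lemma 2.3]{CO} (already invoked above to get the subcomplexes) are essential, and I would spell out that the relevant $J$ is chosen of contact type on $\partial V\times[\rho_0,\delta-\rho_0]$ so that the maximum principle applies there. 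Once this confinement is in hand, genericity of $(H_2,J)$ (hence of the induced $(H,J)$) guarantees the moduli spaces are cut out transversally and the counts on both sides literally coincide, so the isomorphism is an isomorphism of chain complexes, not merely of homologies.
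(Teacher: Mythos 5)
Your proposal is correct and takes essentially the same approach as the paper: you match the generators of $C^{\rom{1},\rom{2}}(H_2,J)$ with the $1$-periodic orbits of $H=\iota_V H_2$ and reduce the identification of the differentials (and, after quotienting, of the positive versions) to the confinement statement that a Floer trajectory of $H_2$ with both asymptotes in regions \rom{1}--\rom{2} never enters $\widehat{W}\setminus(U\cup V)$, which you establish by a Stokes-type no-escape/maximum principle at a contact slice $\partial V\times\{\rho\}$ with $\rho_0<\rho<\delta-\rho_0$. The only differences are cosmetic: you cite the Cieliebak--Oancea no-escape lemma where the paper instead proves and applies Abouzaid's maximum principle (Theorem \ref{thm:abouzaid}, part a) to the complementary domain $W'=\widehat{W}\setminus\bigl(V\cup(\partial V\times[0,\rho_0+\epsilon[)\bigr)$ with inward-pointing Liouville field -- an equivalent energy argument -- and your side remark about the action filtration is stated backwards (the action ordering excludes trajectories asymptotic to \rom{4},\rom{5} at $-\infty$ and to \rom{1},\rom{2} at $+\infty$, which is what makes the quotient complex well defined), though this does not affect your main confinement step.
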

\begin{proof}
	We need to check that there is no Floer trajectory $u:\R\times S^1\rightarrow \widehat{W}$ going from an orbit in $C^{\rom{1},\rom{2}}$ (resp. $C^{\rom{2}}$) to an orbit in $C^{\rom{1},\rom{2}}$ (resp. $C^{\rom{2}}$)
 with points in $\widehat{W}\setminus (U\cup V)$. We prove it by contradiction,
	as a direct application of Abouzaid maximum principle which we prove below as theorem \ref{thm:abouzaid}. Assume that
	$ u:\R\times S^1\rightarrow \widehat{W}$ is a Floer trajectory whose image intersects $\widehat{W}\setminus (U\cup V)$.
	We consider the intersection of the image with a slice $\partial V\times\{ \rho\} $ for any $\rho_0< \rho<\delta-\rho_0$
	and we choose a regular value $\rho_0 + \epsilon$ of $\rho \circ u$. 
	The manifold $W':=\widehat{W}\setminus ( V\cup( \partial V\times [0,\rho_0 + \epsilon[))$ is symplectic with
	 contact type with 
         boundary $ \partial V \times \{\rho_0 + \epsilon\} $ and Liouville vector field pointing inwards. Let  $S$ be the inverse image of $W'$ under the map 
         $u$;  it is a compact Riemann surface with boundary ; the complex structure $j$ is the restriction to $S$ of the complex structure $j$ on the cylinder defined by $j(\partial_s)=\partial_\theta$. We define 
         $\beta$ to be the restriction of  $d\theta$ to $S$. The fact that $u$ is a Floer trajectory is equivalent to
         $(du - X_{H}\otimes\beta)^{0,1}:=\half \left((du - X_{H}\otimes\beta) +J(du - X_{H}\otimes\beta)j\right)=0$,
         where $du$ is the differential of the map $u$ viewed as a section of $T^{\star}S\otimes u^{\star}TW'$.
	Then part a of theorem \ref{thm:abouzaid}, which is slight generalisation of a theorem of Abouzaid, concludes.
\end{proof}
\begin{theorem}[Abouzaid, \cite{Rit}]\label{thm:abouzaid}
	Let $(W',\w'=d\lambda')$ be an exact  symplectic manifold with contact type boundary $\partial W'$, such that the Liouville vector field points inwards.
	Let $\rho$ be the coordinate near $\partial W'$ defined by the flow of the Liouville vector field starting from the boundary and let $r:=e^\rho$; near the boundary the symplectic form writes   $\w'=d(r\alpha)$
	with $\alpha$ the contact form on $\partial W'$ given by the restriction of $\lambda'$.
	Let $J$ be a compatible almost complex structure such that $J^*\lambda'= dr$ on the boundary.\\
	a) Let  $H:W'\rightarrow \R$ be non negative, and such  that  $H=h(r)$ where $h$ is a convex increasing function near the boundary.
	Let $S$ be a compact Riemann surface with boundary and let $\beta$ be a 1-form such that  $d\beta \geq 0$.
	Then any solution $u:S\rightarrow W'$ of $(du - X_{H}\otimes\beta)^{0,1}=0$ with $u(\partial S) \subset \partial W'$
	is entirely contained in $\partial W'$.\\
	b) Let  $H:\R\times S^1\times W'\rightarrow \R$ be an increasing homotopy, such  that  $H(s,\theta,p,\rho)$ $=H_s^\theta (p,\rho)= h_s(r) $ where $h_s$ are convex increasing functions near the boundary. Let $S$ be a compact Riemann surface with boundary embedded in the cylinder {($\R\times S^1$ with the standard structure)}.
Then any solution $u:S\rightarrow W'$ of $(du - X_{H_s}\otimes d\theta)^{0,1}=0$ with $u(\partial S) \subset \partial W'$
is entirely contained in $\partial W'$.
\end{theorem}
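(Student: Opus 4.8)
The plan is to establish part a) first, then deduce part b) by treating the homotopy parameter as just another piece of data. For part a), I would argue by contradiction: suppose $u:S\to W'$ is a solution of $(du-X_H\otimes\beta)^{0,1}=0$ with $u(\partial S)\subset\partial W'$ but $u(S)\not\subset\partial W'$. Consider the function $r\circ u:S\to\R$, where $r=e^\rho$ is the radial coordinate near $\partial W'$ extended smoothly to all of $W'$. Since $H=h(r)$ with $h$ convex increasing near the boundary and the boundary is a maximum locus of $r$, I would choose a regular value $r_1$ slightly below $\max r$, replace $S$ by the subsurface $\Sigma:=\{x\in S: r(u(x))\geq r_1\}$ (which has boundary mapping into the region where $H=h(r)$), and derive a contradiction by showing $r\circ u$ is subharmonic there, hence attains its maximum on $\partial\Sigma$, whereas on the part of $\partial\Sigma$ where $r\circ u=r_1$ it cannot — forcing $u$ into $\partial W'$.

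The heart of the computation is the subharmonicity estimate. Writing the Floer equation as $(du-X_H\otimes\beta)^{0,1}=0$ and pairing with $\lambda'$, one gets $d(u^\star\lambda'-H\,\beta\circ\text{something})$ type identities. More precisely, following Abouzaid--Ritter, I would compute $d(u^\star\lambda')$ and $d((r\circ u)\,d(\text{angular part}))$ and combine them: the key pointwise inequality is that, on the region where $H=h(r)$ with $h$ convex and increasing, one has
\[
\Delta(r\circ u)\;\geq\; \bigl(\text{nonnegative terms involving }|du-X_H\otimes\beta|^2\bigr)\;+\;\bigl(\text{terms with the right sign from }d\beta\geq 0\text{ and }h''\geq 0\bigr).
\]
Here the condition $J^\star\lambda'=dr$ on the boundary is exactly what makes the cross terms vanish or acquire the correct sign, and convexity $h''\geq 0$ together with $h'\geq 0$ ensures the Hamiltonian term does not spoil the inequality. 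The hypothesis $H\geq 0$ is used to control the zeroth-order contribution. Once $\Delta(r\circ u)\geq 0$ on $\Sigma$, the strong maximum principle (or the Hopf lemma) applies: either $r\circ u$ is constant — putting $u$ on a level set, and then the equation forces it onto $\partial W'$ — or its maximum is on the boundary, but the Hopf lemma rules out an interior-normal-derivative sign at points of $\partial S$ where $u$ genuinely touches $\partial W'$, again yielding the contradiction.

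For part b), the only change is that $H$ now depends on $(s,\theta)$ through $H_s^\theta$ with each $h_s$ convex increasing, and $\beta=d\theta$ so $d\beta=0$. The same computation goes through provided $\partial_s H_s\geq 0$ (the "increasing homotopy" hypothesis), since the extra term produced by differentiating $H$ in the $s$-direction enters $\Delta(r\circ u)$ with a favorable sign: $\partial_s(h_s(r))=(\partial_s h_s)(r)\geq 0$ wherever it appears, and one checks it is multiplied by a nonnegative factor (essentially $dr(R_\alpha)$-type quantities vanish and what survives is $\geq 0$). Embedding $S$ in the standard cylinder guarantees $d\beta=d(d\theta)=0$, so no $d\beta\geq 0$ correction is even needed. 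I would then conclude exactly as in part a).

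The main obstacle I anticipate is getting the signs right in the subharmonicity inequality — in particular, correctly bookkeeping the boundary term $J^\star\lambda'=dr$, the convexity term $h''|{\cdot}|^2$, and (in part b) the homotopy term $\partial_s h_s$, so that all three conspire to give $\Delta(r\circ u)\geq 0$ rather than an inequality with an uncontrolled sign. This is the standard but delicate computation in the Abouzaid--Ritter maximum-principle circle of ideas, and I would carry it out carefully with a fixed sign convention for $\omega'=d\lambda'$ and for $X_H$ fixed at the outset. Everything else — choosing the regular value $r_1$, invoking Sard, applying the Hopf lemma — is routine.
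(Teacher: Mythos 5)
Your plan is not the paper's argument, and as written it does not go through. The paper proves the statement by a single integrated (Stokes) computation: the energy $E(u)=\frac{1}{2}\int_S\Vert du-X_H\otimes\beta\Vert^2\,vol_S$ is rewritten as $\int_S u^{\star}\w'+u^{\star}(dH)\wedge\beta$, bounded by a boundary integral over $\partial S$ using $d\beta\geq0$, $H\geq0$, $H\leq rh'(r)=-\lambda'(X_H)$ on $\partial W'$, and $J^{\star}\lambda'=dr$ \emph{at} the boundary, and then shown to be $\leq 0$ because the Liouville field points inwards, so $r\circ u$ is minimal along $u(\partial S)$; hence $E(u)=0$, $du=X_H\otimes\beta$, and the image stays in $\partial W'$. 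Your proposal replaces this by a pointwise maximum principle for $r\circ u$, and here the geometry of the present theorem defeats you: since the Liouville vector field points \emph{inwards}, $\partial W'$ is the \emph{minimum} locus of $r$, not the maximum locus as you assert. Your subsurface $\Sigma=\{r\circ u\geq r_1\}$ with $r_1$ just below $\max(r\circ u)$ therefore lies where $u$ is far from $\partial W'$ -- precisely where none of the hypotheses give any structure: $H$ is only assumed nonnegative there (not of the form $h(r)$), $J^{\star}\lambda'=dr$ is assumed only \emph{on} $\partial W'$ (not on a collar, let alone in the interior), and the coordinate $r$ is not even defined away from the boundary. So the subharmonicity inequality you need has no justification on $\Sigma$, and no choice of regular value fixes this; the hypotheses are boundary data only, which is exactly why the paper's argument is an integral identity reduced to $\partial S$ rather than a pointwise one.

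There is a second, independent problem in your part b). In a pointwise Laplacian computation the $s$-dependence of $H_s=h_s(r)$ contributes terms involving $\partial_s h_s'(r)$, whose sign is \emph{not} controlled by the hypothesis $\partial_sH_s\geq0$ (monotonicity of $h_s$ in $s$ says nothing about monotonicity of the slopes $h_s'$). The increasing-homotopy hypothesis enters with the right sign only in the integrated identity, where the extra term is $-\int u^{\star}(\partial_sH^{\theta}_s)\,ds\wedge d\theta\leq0$, which is how the paper uses it. If you want to salvage your write-up, the efficient fix is to abandon the subharmonicity/Hopf-lemma route and run the energy argument: expand $\Vert du-X_H\otimes\beta\Vert^2\,vol_S=u^{\star}\w'+u^{\star}(dH)\wedge\beta$, use $d(u^{\star}H\beta)$ (resp.\ the $s$-dependent analogue) together with $d\beta\geq0$ (resp.\ $\partial_sH_s\geq0$), apply Stokes, and exploit $J^{\star}\lambda'=dr$ and the inward Liouville field to see that the boundary contribution is $\leq0$.
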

\begin{proof} Proof of part a. The energy of a map $u: S\rightarrow W'$ is defined as 
$
E(u):=\half \int_S \Vert du-X_H\otimes \beta\Vert^2 vol_S
$
where $du$ is viewed as a section of $T^{\star}S\otimes u^{\star}TW'$. If  $s+it$ is a local holomorphic coordinate on $S$, so that $j(\partial_s)=\partial_t$ and $vol_S=ds\wedge dt$ we have
 \begin{align*}
\half\Vert du-X_H\otimes \beta\Vert^2 vol_S&=\w'\left(\partial_su -X_H\beta(\partial_s),\partial_tu -X_H\beta(\partial_t)\right)ds\wedge dt\\
& =\left( \w' (\partial_su, \partial_tu)-dH(\partial_tu)\beta(\partial_s) +dH(\partial_su)\beta(\partial_t)\right) ds\wedge dt\\
&= u^{\star}\w'+u^{\star}(dH)\wedge \beta.
\end{align*}
It is obviously non negative for any path.
Since $d(u^{\star}H\beta) = u^{\star}(dH)\wedge\beta + \underbrace{u^{\star}Hd\beta}_{\geq 0}$, we have
\begin{align*}
		E(u) &= \int_{S} u^{\star}d\lambda' + u^{\star}(dH)\wedge\beta
		\leq \int_{S} d(u^{\star}\lambda') + d(u^{\star}H\beta)   
		\leq \int_{\partial S} u^{\star}\lambda' - \lambda'(X_{H})\beta\\
		&\qquad \textrm{since }  H =h(r) \leq rh'(r)=
			r\alpha \bigl(h'(r)R_{\alpha}\bigr) = -\lambda'(X_{H}) \textrm{ on } u(\partial S)\subset\partial V\,  \\
		&= \int_{\partial S} \lambda'(du-X_{H}\otimes\beta)\\
		&= \int_{\partial S} -\lambda' J(du-X_{H}\otimes\beta)j 
		\qquad \textrm{since } (du - X_{H}\otimes\beta)^{0,1}=0\\
		&=  \int_{\partial S} -dr(du-X_{H}\otimes\beta)j \quad \textrm{ since  }J^{\star}\lambda'= dr \textrm{  along }u(\partial S)\subset\partial W'\\
		&= \int_{\partial S} -dr\,  du\,  j
		\qquad \textrm{ since  } dr\textrm{ vanishes on }X_{H}\textrm{ on }u(\partial S)\subset\partial W'.
	\end{align*}
	Let $\nu$ be the outward normal direction along $\partial S$.
	Then $(\nu,j\nu)$ is an oriented frame, so $\partial S$ is oriented by $j\nu$.
	Now $dr(du)j(j\nu)= d(r\circ u)(-\nu)\geq0$ since in the inward direction, $-\nu$, $r\circ u$ can only increase because $r$ is minimum on  $\partial W'$.
	So $E(u) \leq 0$
	hence $E(u)=0$. This implies that $du-X_H\otimes \beta=0$ which shows that the image of $du$ is in the 
	span of $X_H$ which is the span of $R_\alpha\in T\partial W'$ on $\partial W'$.
	Hence the image of $u$ is entirely in contained in $\partial W'$.\\
Proof of part b. The proof starts as above. The energy of $u$ is non negative and given by
\[
E(u):=\half \int_S \Vert du-X_{H_s}\otimes d\theta\Vert^2 vol_S= \int_S u^{\star}\w'+u^{\star}(dH^\theta_s)\wedge d\theta.
\]
We have  $u^{\star}(dH^\theta_s)\wedge d\theta = d(u^{'\star}H)\wedge d\theta- \underbrace{u^{\star}\partial_sH^\theta_s ds\wedge d\theta}_{\geq 0}$, for $u':S\rightarrow \R\times S^1\times W' $ which maps an element $(\theta,s)\in S$ to the element $(s,\theta, u'(\theta,s))$.
Hence 
 \begin{align*}
		E(u) &= \int_{S} u^{\star}d\lambda' + u^{\star}(dH)\wedge d\theta\\
		&\leq \int_{S} d(u^{\star}\lambda') + d(u^{'\star}H d\theta)\\
		&\leq \int_{\partial S} u^{\star}\lambda' - \lambda'(X_{H_s})d\theta \textrm{ using Stokes's theorem and}\\
		&\qquad H =h_s(r )\leq r\alpha \bigl(h_s'(r)R_{\alpha}\bigr) = -\lambda'(X_{H_s}) \textrm{ on } u(\partial S)\subset\partial V \\
		&= \int_{\partial S} \lambda'(du-X_{H_s}\otimes d\theta)\\
\end{align*}
and the proof proceeds as in part a.
\end{proof}

For any element $H_1\in\Hstd(W)$, one can consider an element in $H_2\in\Hs_{stair}(V,W)$ such that $H_1$ and $H_2$ coincide ``far in the completion'', i.e. on $\partial W\times [\rho'_2,+\infty[\subset \widehat{W}$. Let $H=\iota_V(H_2) \in \Hstd(V)$. We want to build a morphism from the homology defined by  $H_1$ to the  homology 
defined by $H$. We shall first construct a morphism in the homology defined by $H_2$.
With $H_1\in\Hstd(W)$ and $H_2\in\Hs_{stair}(V,W)$ as above, we can consider an increasing  homotopy $H_s, s\in \R,$ between $H_1$ and $H_2$, 
i.e $\frac{d}{ds}H_s\geq0$,  with the property that there exists $s_0$ such that $H_s\equiv H_1$ for $s\leq-s_0$ and $H_s\equiv H_2$ for $s\geq s_0$.
We  define a morphism $SC(H_1,J_1)\rightarrow SC(H_2,J_2)$ by counting Floer trajectories for the homotopy.
Denote by $\Mod(\gamma_1,\gamma_2,H_s,J_s)$ the space of Floer trajectories from $\gamma_1$ to $\gamma_2$ i.e maps $u:\R\times S^1\rightarrow \widehat{W}$
such that:
\begin{equation}\label{eq:Floerhomotopy}
	 \partial_su + J_{s}^{\theta}\circ u(\partial_{\theta}u - X_{H_{s}}^{\theta}\circ u)=0
\end{equation}
with $ \lim_{s\rightarrow-\infty}u(s,\cdot) = \gamma_1(\cdot)$ and $\lim_{s\rightarrow\infty}u(s,\cdot) = \gamma_2(\cdot).$
It is proven in \cite{O,FHS} that for a generic choice of the pair $(H_s,J_s)$, the spaces $\Mod(\gamma_1,\gamma_2,H_s,J_s)$ are manifolds of dimension $\mu_{CZ}(\gamma_2)-\mu_{CZ}(\gamma_1)$
for any $\gamma_1$ in $\Per(H_1)$ and $\gamma_2$ in $\Per(H_2)$.
Let us observe that there is no general $\R$-action on this space.
The homotopy $H_s$ gives rise to a  { morphism}
\[
	\phi_{H_s}: SC(H_1,J_1)\rightarrow SC(H_2,J_2) : 
	\gamma_1\mapsto \sum_{\substack{\gamma_2\in\Per(H_2)\\\mu_{CZ}(\gamma_2)=\mu_{CZ}(\gamma_1)}}\#\Mod(\gamma_1,\gamma_2,H_s,J_s)\gamma_2
\]
where the count involves, as always, signs. The study  of the boundary of a space of Floer trajectories $\Mod(\gamma_1,\gamma_2,H_s,J_s)$
for  $\gamma_1\in\Per(H_1)$ and $\gamma_2\in\Per(H_2)$ such that  $\mu_{CZ}(\gamma_1)=\mu_{CZ}(\gamma_2)+1$ shows that	the morphism $\phi_{H_s}$ is a chain map,
hence induces a morphism in homology, still denoted 
$
	\phi_{H_s}: SH(H_1,J)\rightarrow SH(H_2,J).
$
The homotopy of homotopies theorem shows that $\phi_{H_s}$ is independent of the choice of the homotopy; 
hence we denote it by $\phi_{H_1,H_2}$. 
\begin{definition} 
Given an element $H_1$ in $\Hstd(W)$, consider  an element $H_2\in\Hs_{stair}(V,W)$ such that $H_1$ and $H_2$ coincide ``far in the completion'', and let $H=\iota_V(H_2) \in \Hstd(V)$. We define the {\emph{transfer morphism}}
$$
SH( H_1,J)\rightarrow SH( H,J')=SH(H_2,J)/\raisebox{-1ex}{$SH^{\leq -\eta}(H_2,J)$}=H\bigl(C^{\rom{1},\rom{2}}(H_2,J),\partial\bigr)
$$
which is the composition of $\phi_{H_1,H_2}$ followed by the natural projection.\\
The action decreases along  Floer trajectories, so this maps $SH^{\leq \epsilon}( H_1,J)$ 
to 
$$
SH^+( H,J')=SH^{\leq \epsilon}(H_2,J),\partial)/\raisebox{-1ex}{$SH^{\leq -\eta}(H_2,J),\partial)$}=H\bigl(C^{\rom{2}}(H_2,J),\partial\bigr)
$$ and induces a  {\emph{transfer morphism}} for the positive 
homology
$$
SH^+( H_1,J)\rightarrow SH^+( H,J')=H\bigl(C^{\rom{2}}(H_2,J),\partial\bigr).
$$
With our identification, the map is obtained by counting  solutions of equation \eqref{eq:Floerhomotopy}
going from a $1$-periodic orbit of $X_{H_1}$ to a $1$-periodic orbit of $X_{H_2}$ lying in region $\rom{1}$ or $\rom{2}$.
\end{definition}
The  homotopy of homotopies theorem  shows  that the map does not depend
on the choice of stair function $H_2$ such that $\iota_VH_2=H$ and
such that $H_1$ and $H_2$ coincide far in the completion; we shall denote it $\phi^H_{H_1}$. It also shows that the map $\phi_{H_1,H_2}$ commutes with continuation, i.e if $\rho_1: SH(H_1)\rightarrow SH(H'_1)$ is a continuation for $H_1$ and
$\rho_2 : SH(H_2)\rightarrow SH(H'_2)$ is a continuation for $H_2$ then
$\phi_{H'_1,H'_2}\circ\rho_1=\rho_2\circ\phi_{H_1,H_2}.$
\begin{proposition}\label{prop:transfer2}
	The transfer map $\phi^H_{H_1}: SH( H_1,J)\rightarrow SH( H,J')$ commutes with continuations.
\end{proposition}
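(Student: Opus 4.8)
The plan is to check commutativity with continuation maps on both sides by the standard "homotopy of homotopies" (or "gluing of parametrized moduli spaces") argument, exactly as one does when proving that transfer morphisms in ordinary symplectic homology commute with continuations. Concretely, suppose $\rho_1 : SH(H_1, J)\to SH(H_1', J)$ is a continuation morphism for a pair of Hamiltonians in $\Hstd(W)$ with $H_1\leq H_1'$, and let $\tau : SH(H, J')\to SH(H', J'')$ be the corresponding continuation morphism for the associated Hamiltonians $H = \iota_V(H_2)$, $H' = \iota_V(H_2')$ in $\Hstd(V)$, where $H_2, H_2' \in \Hs_{stair}(V, W)$ are stair functions that coincide with $H_1, H_1'$ respectively far in the completion. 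I want to show $\phi^{H'}_{H_1'}\circ \rho_1 = \tau \circ \phi^{H}_{H_1}$.

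First I would lift everything to the level of chain complexes. By the discussion preceding Proposition \ref{prop:transfer2}, the transfer morphism $\phi^{H}_{H_1}$ is the composition of the continuation-type morphism $\phi_{H_1, H_2}: SC(H_1, J)\to SC(H_2, J)$ (counting solutions of \eqref{eq:Floerhomotopy} for an increasing homotopy from $H_1$ to $H_2$) with the projection $SC(H_2, J) \to C^{\rom{1},\rom{2}}(H_2, J)$ coming from the action filtration. The key remark, already noted just after the definition of the transfer morphism, is that $\phi_{H_1, H_2}$ commutes with continuations at the level of the full Floer complexes on $\widehat{W}$: this is the ordinary statement that continuation morphisms for the same symplectic manifold $\widehat{W}$ compose associatively up to chain homotopy, proved by considering the two-parameter family of homotopies interpolating between "first do $H_1\to H_1'$, then $H_1'\to H_2'$" and "first do $H_1\to H_2$, then $H_2\to H_2'$". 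So $\phi_{H_1', H_2'}\circ \rho_1 = \rho_2 \circ \phi_{H_1, H_2}$ in homology, where $\rho_2$ is the continuation morphism on $SH(H_2)$ induced by an increasing homotopy from $H_2$ to $H_2'$.

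Next I would check that the continuation morphism $\rho_2: SH(H_2, J)\to SH(H_2', J)$ descends through the quotients defining $C^{\rom{1},\rom{2}}$ and that the induced map agrees with $\tau$. For this I need two things. (1) The homotopy from $H_2$ to $H_2'$ can be chosen increasing, so by Proposition \ref{prop:actionS1bo} (or rather its non-equivariant analogue, i.e. the fact that action decreases along solutions of \eqref{eq:floerparam}) the subcomplexes $C^{\rom{4},\rom{3},\rom{5}}$ and $C^{\rom{4},\rom{3},\rom{5},\rom{1}}$ are preserved, using also \cite[Lemma 2.3]{CO} to rule out trajectories from region \rom{3} into regions \rom{1} or \rom{2}; hence $\rho_2$ passes to the quotient $C^{\rom{1},\rom{2}}$ and to $C^{\rom{2}}$. (2) Under the identification of Proposition \ref{prop:homCII}, $H(C^{\rom{1},\rom{2}}(H_2, J), \partial) = H(SC(H, J))$ and similarly for $H_2'$, and the map induced by $\rho_2$ on these quotients is, by construction, counted by the same solutions of \eqref{eq:Floerhomotopy} that define the continuation map for the increasing homotopy from $H=\iota_V(H_2)$ to $H'=\iota_V(H_2')$ in $\widehat V$ — here one uses again the Abouzaid maximum principle (Theorem \ref{thm:abouzaid}, part b) to confine the relevant parametrized Floer trajectories to $V\cup U$ so that they may be read off in $\widehat V$. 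This identifies the induced map with $\tau$.

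The main obstacle I expect is the bookkeeping in step (2): namely, carefully verifying that the increasing homotopy between the stair Hamiltonians $H_2$ and $H_2'$ can be chosen so that (a) it stays of "stair type" throughout, with the regions \rom{1}–\rom{5} and all the action inequalities $\A(\rom{4})<\A(\rom{5})<0<\A(\rom{1})<\epsilon<\A(\rom{2})$ preserved along the whole homotopy, and (b) its restriction, after applying $\iota_V$, is genuinely an increasing homotopy in $\Hstd(V)$ between $H$ and $H'$ of the type used to define continuation maps there. Once the homotopy is set up with these compatibilities, the confinement of trajectories via Theorem \ref{thm:abouzaid}(b) and the action-filtration argument are routine, and the diagram chases close up. Finally, independence of all choices (of the stair functions and of the homotopies) follows from the homotopy-of-homotopies theorem, exactly as in the definition of $\phi^H_{H_1}$, so the identity $\phi^{H'}_{H_1'}\circ\rho_1 = \tau\circ\phi^H_{H_1}$ is well-posed and holds.
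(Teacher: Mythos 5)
Your proposal is correct and follows essentially the same route as the paper: the heart of the matter is exactly your step (2), namely confining the continuation Floer trajectories between orbits of the stair Hamiltonians in regions I--II to $V\cup U$ via part b of the generalized Abouzaid maximum principle (Theorem \ref{thm:abouzaid}), so that the induced map on the quotient complexes is identified with the continuation map in $\widehat{V}$. The remaining ingredients you list (commutation of $\phi_{H_1,H_2}$ with continuations by homotopy of homotopies, preservation of the action filtration) are precisely what the paper records in the discussion surrounding the proposition.
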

\begin{proof}
To show this,  we still have to show that a continuation map built in $W$ from  $SH(H_2, J)$ to $SH(H'_2,J')$, defined by an increasing  homotopy  $H_s:S^1\times \widehat{W}\rightarrow \R$, induces a continuation map in $V$ from $SH(H=\iota_V(H_2),J)$ to $SH(H'=\iota_V(H_2'),J')$. 
For this,  it is enough to check that there is no Floer trajectory corresponding to the homotopy, i.e. $u:\R\times S^1\rightarrow \widehat{W}$ solution of 
 \eqref{eq:Floerhomotopy} going from an orbit in $C^{\rom{1},\rom{2}}(H_2, J)$ (resp. $C^{\rom{2}}(H_2, J)$) to an orbit in $C^{\rom{1},\rom{2}}(H'_2,J')$ (resp. $C^{\rom{2}}(H'_2,J')$)
 with points in $\widehat{W}\setminus (U\cup V)$. We prove it by contradiction, proceeding as in the proof
 of Proposition \ref{prop:homCII}, using the generalized Abouzaid maximum principle proven in part b of  Theorem \ref{thm:abouzaid}. Assume that
	$ u:\R\times S^1\rightarrow \widehat{W}$ is a Floer trajectory whose image intersects $\widehat{W}\setminus (U\cup V)$.
	We consider the intersection of the image with a slice $\partial V\times\{ \rho\} $ for any $\rho_0< \rho<\delta-\rho_0$
	and we choose a regular value $\rho_0 + \epsilon$ of $\rho \circ u$. 
	The manifold $W':=\widehat{W}\setminus ( V\cup( \partial V\times [0,\rho_0 + \epsilon[))$ is symplectic with
	 contact type with 
         boundary $ \partial V \times \{\rho_0 + \epsilon\} $ and the  Liouville vector field pointing inwards. Let  $S$ be the inverse image of $W'$ under the map 
         $u$;  it is a compact Riemann surface embedded in the cylinder with boundary ; the complex structure $j$ is the restriction to $S$ of the complex structure $j$ on the cylinder defined by $j(\partial_s)=\partial_\theta$.  The fact that $u$ is a Floer trajectory is equivalent to
         $(du - X_{H_s}\otimes d\theta)^{0,1}:=\half \left((du - X_{H_s}\otimes d\theta) +J(du - X_{H_s}\otimes d\theta)j\right)=0$,
         where $du$ is the differential of the map $u$ viewed as a section of $T^*S\otimes u^*TW'$.
	Then part b of Theorem \ref{thm:abouzaid} concludes.
	\end{proof}
\begin{cor}
	The  maps $\{\, \phi_{H_1}^H\,\}$ induce a transfer map:
$
\phi_{W,V} : SH(W,\lambda_W)\rightarrow SH(V,\lambda_V)
$
and, on the quotient, the morphism
$
\phi^+=\phi^+_{W,V}: SH^+(W,\lambda_W)\rightarrow SH^+(V,\lambda_V).
$
\end{cor}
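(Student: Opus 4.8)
The plan is to obtain $\phi_{W,V}$ and $\phi^+=\phi^+_{W,V}$ by passing to the direct limits that define $SH$ and $SH^+$; all the analytic content is already in place, so what remains is to organise two compatible cofinal families of Hamiltonians and to invoke the fact that the morphisms $\phi_{H_1}^H$ are compatible with continuations.

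First I would fix the collar $U\cong\partial V\times[0,\delta]$ once and for all and choose a cofinal sequence $H_1^{(1)}\leq H_1^{(2)}\leq\cdots$ in $\Hstd(W)$ whose slopes at infinity tend to $+\infty$. For each $k$ I select a stair Hamiltonian $H_2^{(k)}\in\Hs_{stair}(V,W)$ (Definition \ref{def:Hstair}) coinciding with $H_1^{(k)}$ far in the completion, so that $\phi_{H_1^{(k)}}^{H^{(k)}}$ is defined, and with slope $\beta_k$ near $\partial V$. These can be arranged so that $\beta_k\to+\infty$ and $H_2^{(k)}\leq H_2^{(k+1)}$ everywhere: the constants occurring in Definition \ref{def:Hstair} grow with the slope, so that raising $\beta$ pushes $H_2$ upward on regions \rom{2}--\rom{5}, and one is free in region \rom{1}. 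Setting $H^{(k)}:=\iota_V(H_2^{(k)})\in\Hstd(W)$'s analogue on $V$, the sequence $H^{(1)}\leq H^{(2)}\leq\cdots$ is then cofinal in $\Hstd(V)$ because $\beta_k\to+\infty$, so that $SH(W,\lambda_W)$ and $SH(V,\lambda_V)$ are the direct limits over $k$ of the $SH(H_1^{(k)})$ and of the $SH(H^{(k)})=H\bigl(C^{\rom{1},\rom{2}}(H_2^{(k)}),\partial\bigr)$, respectively.

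Next, for each $k$ the inequality $H_2^{(k)}\leq H_2^{(k+1)}$ gives an increasing homotopy, hence a continuation $SH(H_2^{(k)})\to SH(H_2^{(k+1)})$ which, by the argument used in the proof of Proposition \ref{prop:transfer2} (the generalised Abouzaid maximum principle of Theorem \ref{thm:abouzaid} confining the relevant Floer cylinders to $U\cup V$), descends to a continuation $SH(H^{(k)})\to SH(H^{(k+1)})$. Since $\phi_{H_1,H_2}$ commutes with continuations and so does the projection onto $H\bigl(C^{\rom{1},\rom{2}}(H_2),\partial\bigr)$ (Proposition \ref{prop:transfer2}), the square whose horizontal arrows are $\phi_{H_1^{(k)}}^{H^{(k)}}$ and $\phi_{H_1^{(k+1)}}^{H^{(k+1)}}$ and whose vertical arrows are these continuations commutes. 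Hence $\{\phi_{H_1^{(k)}}^{H^{(k)}}\}_k$ is a morphism of direct systems, which passes to the limit and yields $\phi_{W,V}:SH(W,\lambda_W)\to SH(V,\lambda_V)$. Its independence of all auxiliary choices (the two cofinal sequences, the intermediate stair functions, the homotopies) follows from the standard fact about maps of direct systems: two admissible sequences admit a common refinement, and one compares via Proposition \ref{prop:transfer2} once more, the independence of $\phi_{H_1}^H$ from the stair function being already the homotopy-of-homotopies theorem.

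For the positive homology I would run the same argument on the positive complexes. The continuation chain morphism from $H_1^{(k)}$ to $H_2^{(k)}$ strictly decreases the action, hence carries the subcomplex $SC^{\leq\epsilon}(H_1^{(k)})$ spanned by the critical points of $H_1^{(k)}$ into $C^{\rom{1}}(H_2^{(k)})$, and therefore induces a chain map $SC^+(H_1^{(k)})\to C^{\rom{2}}(H_2^{(k)})=C^{\rom{1},\rom{2}}(H_2^{(k)})/C^{\rom{1}}(H_2^{(k)})$; by Proposition \ref{prop:homCII} this is precisely the transfer morphism $SH^+(H_1^{(k)})\to SH^+(H^{(k)})$ of the preceding Definition, and it commutes with the continuations by the same action estimate together with Theorem \ref{thm:abouzaid}, so $\varinjlim_k$ delivers $\phi^+_{W,V}:SH^+(W,\lambda_W)\to SH^+(V,\lambda_V)$. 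The one genuine obstacle is organisational: one must construct the two interlocking cofinal sequences so that simultaneously $\{H_1^{(k)}\}$ is cofinal in $\Hstd(W)$, each pair $(H_1^{(k)},H_2^{(k)})$ has the form permitted by Definition \ref{def:Hstair}, $H_2^{(k)}\leq H_2^{(k+1)}$, and $\{\iota_V(H_2^{(k)})\}$ is cofinal in $\Hstd(V)$; this requires care but no new estimate, since the analytic input — the action bounds and the maximum principle — is already provided by Proposition \ref{prop:homCII}, Proposition \ref{prop:transfer2} and Theorem \ref{thm:abouzaid}.
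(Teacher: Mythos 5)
Your argument is correct and is essentially the route the paper intends: the corollary is an immediate consequence of the definition of $\phi_{H_1}^{H}$, its independence of the choice of stair Hamiltonian, and Proposition \ref{prop:transfer2} (commutation with continuations), which together make the $\phi_{H_1}^{H}$ a morphism of direct systems over cofinal families, passing to the limit for $SH$ and, since all the maps decrease the action, to the quotient complexes for $SH^{+}$. Your write-up merely makes explicit the organisation of the interlocking cofinal families, which is the same bookkeeping the paper leaves implicit.
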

\begin{theorem}[Composition]\label{thm:compos}
	Let $(V_1,\lambda_{V_1}) \subseteq (V_2,\lambda_{V_2})\subseteq (V_3,\lambda_{V_3})$ be Liouville domains with Liouville embeddings.
	Then the following diagram commutes:
	\begin{equation}\label{diagcommutatifSH+}
		\xymatrix{
			SH^{+}(V_3,\lambda_{V_3}) \ar[r]^{\phi^+_{V_3,V_2}} \ar@/_2pc/[rr]^{\phi^+_{V_3,V_1}} & SH^{+}(V_2,\lambda_{V_2}) \ar[r]^{\phi^+_{V_2,V_1}} & SH^{+}(V_1,\lambda_{V_1})
		}
	\end{equation}
\end{theorem}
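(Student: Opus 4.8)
The plan is to follow Viterbo's argument: build on $\widehat{V_3}$ a single Hamiltonian carrying \emph{two} staircase steps, one near $\partial V_1$ and one near $\partial V_2$, and read all three transfer morphisms off its Floer complex. Fix $H_1\in\Hstd(V_3)$. First I introduce ``iterated staircase'' Hamiltonians $K:S^1\times\widehat{V_3}\to\R$ that are negative and $C^2$-small on $V_1$; $C^2$-close, on a thin collar $U_1$ of $\partial V_1$, to a convex--linear--concave step of plateau slope $\beta_1\notin\Spec(\partial V_1,\alpha)$; $C^2$-close to a constant on $V_2\setminus(V_1\cup U_1)$; $C^2$-close, on a thin collar $U_2$ of $\partial V_2$, to a second convex--linear--concave step of plateau slope $\beta_2$; $C^2$-close to a constant on $V_3\setminus(V_2\cup U_2)$; $C^2$-close, on $\partial V_3\times\R^+$, to a concave--linear profile of small slope $\mu\notin\Spec(\partial V_3,\alpha)$, coinciding there with $H_1$; and with nondegenerate $1$-periodic orbits. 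Let $K'$ be the ``flattened'' Hamiltonian obtained from $K$ by replacing the $\partial V_1$-step by a $C^2$-small negative function on all of $V_2$; for matched slopes $K'\in\Hs_{stair}(V_2,V_3)$, and I set $H^{(2)}:=\iota_{V_2}(K')\in\Hstd(V_2)$. Moreover $\iota_{V_2}(K)$ is of staircase type $(V_1,V_2)$ (with outer slope $\beta_2$), and $\iota_{V_1}(K)=\iota_{V_1}\!\bigl(\iota_{V_2}(K)\bigr)\in\Hstd(V_1)$. Finally I choose an increasing homotopy $H_1\rightsquigarrow K$ that factors as $H_1\rightsquigarrow K'\rightsquigarrow K$, the second piece fixing $K$ outside $V_2\cup U_2$ (it merely ``grows'' the $\partial V_1$-step), so that applying $\iota_{V_2}$ to the second piece yields precisely an increasing homotopy $H^{(2)}\rightsquigarrow\iota_{V_2}(K)$ on $\widehat{V_2}$.

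Since $K$ coincides with a minimal $(V_1,V_3)$-staircase near $V_1\cup U_1$ and near $\partial V_3$, the homotopy-of-homotopies theorem together with Proposition \ref{prop:transfer2} (compatibility of transfer with continuations) shows that $\phi^+_{V_3,V_1}$ is represented by the chain map $SC^+(H_1)\to C^{\rom{2}}(K)=SC^+(\iota_{V_1}K)$ given by the continuation for $H_1\rightsquigarrow K$ followed by the projection onto region $\rom{2}$ of $K$ (the convex part of the $\partial V_1$-step); similarly $\phi^+_{V_3,V_2}$ is represented by the continuation for $H_1\rightsquigarrow K'$ followed by projection onto $C^{\rom{2}}(K')=SC^+(H^{(2)})$, and $\phi^+_{V_2,V_1}$ is, by construction together with Proposition \ref{prop:homCII}, represented by the continuation for $H^{(2)}\rightsquigarrow\iota_{V_2}(K)$ on $\widehat{V_2}$ followed by projection onto its region $\rom{2}$, i.e.\ onto $SC^+(\iota_{V_1}K)$.

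Two confinement statements are needed, both instances of the generalised Abouzaid maximum principle, Theorem \ref{thm:abouzaid}. First, no Floer trajectory (for $K$ or for the homotopies under consideration) between orbits lying in $V_1\cup U_1$ can leave $V_1\cup U_1$: cutting $\widehat{V_3}$ at a regular value of $\rho\circ u$ just beyond $\partial V_1$ gives an exact symplectic manifold with contact-type boundary and inward-pointing Liouville field on which $K$ has the convex radial form $h(r)$, so Theorem \ref{thm:abouzaid}(a) (resp.\ (b)) applies exactly as in the proof of Proposition \ref{prop:homCII}; this legitimises the identifications $C^{\rom{1},\rom{2}}(K)=SC(\iota_{V_1}K)$ and $C^{\rom{2}}(K)=SC^+(\iota_{V_1}K)$ quoted above. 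Second, for the ``step-growing'' homotopy $K'\rightsquigarrow K$, no Floer trajectory from an orbit of $K'$ in $V_2$ or in the convex part of its $\partial V_2$-step to an orbit of $K$ in $V_1\cup U_1$ can leave $V_2\cup U_2$: on $\widehat{V_3}$ cut just beyond $\partial V_2$, where $K=K'$ has the required convex radial form, Theorem \ref{thm:abouzaid}(b) applies, and the trajectory is identified with a Floer trajectory on $\widehat{V_2}$ for the homotopy $H^{(2)}\rightsquigarrow\iota_{V_2}(K)$. Consequently the composite $SC^+(H_1)\to SC^+(K')\to SC^+(K)$, projected onto $C^{\rom{2}}(K)$, equals the projected continuation for $H^{(2)}\rightsquigarrow\iota_{V_2}(K)$ precomposed with $\phi^+_{V_3,V_2}$, i.e.\ equals $\phi^+_{V_2,V_1}\circ\phi^+_{V_3,V_2}$ at the chain level.

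Since the homotopy $H_1\rightsquigarrow K$ was chosen to factor as $H_1\rightsquigarrow K'\rightsquigarrow K$, the chain map of the second paragraph representing $\phi^+_{V_3,V_1}$ is exactly this composite; passing to homology gives $\phi^+_{V_3,V_1}=\phi^+_{V_2,V_1}\circ\phi^+_{V_3,V_2}$, and taking the direct limit over the cofinal family of iterated-staircase data yields the commutativity of \eqref{diagcommutatifSH+}. The main obstacle is the second confinement statement above, as it is what forces the two-step continuation to factor through $\widehat{V_2}$: one must choose the radial profiles of $K$ and of the homotopy between the two steps so that, after cutting at a regular level of $\rho\circ u$ just outside $\partial V_2$, the hypotheses of Theorem \ref{thm:abouzaid}(b) hold verbatim. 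Everything else is the one-step construction of Section \ref{section:transfer} repeated twice; the grading identifications are needed only up to an even shift, which is harmless.
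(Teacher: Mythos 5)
Your proposal is correct and follows essentially the same route as the paper: compare the continuation map for a single increasing homotopy from $H_1$ to a two-step stair Hamiltonian on $\widehat{V_3}$ with the composition of two increasing homotopies through the one-step stair Hamiltonian, and conclude by composition of homotopies. The confinement arguments you spell out via Theorem \ref{thm:abouzaid} (in the spirit of Propositions \ref{prop:homCII} and \ref{prop:transfer2}) are exactly the details the paper's brief proof leaves implicit.
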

\begin{proof}The proof results from the comparison of  a count  of Floer trajectories. On one hand, one counts Floer trajectories corresponding to an increasing homotopy $H_{13}$, going from a $1$-periodic orbit of $X_{H_1}$ for an admissible Hamiltonian $H_1$ on $S^1\times \widehat{V_3}$ to the $C^{\rom{2},\rom{1}}$ part of a stair Hamiltonian $H_3$
with two ``steps''.  On the other hand, one  counts trajectories relative to the composition of two increasing homotopies, $H_{12}$ going from $H_1$ to $H_2$ (a stair Hamiltonian with one step) and $H_{23}$ going from $H_2$ to $H_3$. The property is a consequence  of the composition of homotopies.
\end{proof}

\subsection{Transfer morphism for $S^1$-equivariant symplectic homology}\label{sect:transferS1}

We extend the definition of the transfer morphisms  of the
previous section to  $S^1$-equivariant and positive $S^1$-equivariant symplectic homology. 
We consider two embedded Liouville domains $(V,\lambda_V)\subset (W,\lambda_W)$ 
and we want to define a morphism $SH^{S^1}(W,\lambda_W)\rightarrow SH^{S^1}(V,\lambda_V)$.
We start with  autonomous Hamiltonians $H$ in $\Hstd$, we do  small Morse Bott type deformations $H_{\delta}$   and 
then lift those to $S^1$-equivariant functions $H^{N}_{\delta}$.
In this setting, the $S^1$-equivariant symplectic homology can be computed by the simplified complex
as described in sections  \ref{section:specialcomplex} and \ref{section:computS1} :
\[
	S{\widehat{C}}_*^{S^1}(H_{\delta}) := \Z[u]\otimes_{\Z}SC_*(H_\delta)
\]
with differential ${\widehat{\partial}}^{S^1} = \varphi_0+u^{-1}\varphi_1+u^{-2}\varphi_2+\ldots$
where the maps $\varphi_j$ counts Floer trajectories for parametrized Hamiltonians
 going from $S^1 \cdot(\gamma^-,z_j)$ to $S^1 \cdot(\gamma^+,z_0)$ with $z_j$ the critical point of $f$ of index $-2j$.

The action of the element represented by $u^k\otimes \gamma$
is very close to the action of $\gamma$.
To define transfer morphisms, we start with an  autonomous Hamiltonian $H_1$ in $\Hstd(W)$ and an autonomous  $H_2$ in $\Hs_{stair}(W)$, and we do  small Morse Bott type deformations $H_{1\delta}$ et $H_{2\delta}$.
 We define as in the previous section the
subcomplex $ \Z[u]\otimes_{\Z}(C^{\rom{3},\rom{4},\rom{5}}(H_{2\delta}))$ corresponding to points with negative action and elements in region $\rom{3}$,
and we identify the quotient $ \Z[u]\otimes_{\Z}SC_*(H_{2\delta})/\raisebox{-0.5ex}{$ \Z[u]\otimes_{\Z}(C^{\rom{3},\rom{4},\rom{5}}(H_{2\delta}))$}$ to $ \Z[u]\otimes_{\Z}C^{\rom{1},\rom{2}}(H_{2\delta})$.
We consider the Hamiltonian $\iota_V H_{2\delta}$ in  $\Hstd(W)$.
\begin{proposition}
For $\delta$ small enough, the $S^1$ equivariant homology of the quotients coincide with the $S^1$ equivariant homology of the small domain:
$$
		H\bigl( \Z[u]\otimes_{\Z}C^{\rom{1},\rom{2}}(H_{2\delta}),\partial\bigr)=H\bigl(SC( \Z[u]\otimes_{\Z}SC(\iota_V H_{2\delta}))\bigr)$$
		$$
	H\bigl( \Z[u]\otimes_{\Z}C^{\rom{2}}(H_{2\delta}),\partial\bigr)=H\bigl(SC^+( \Z[u]\otimes_{\Z}SC(\iota_V H_{2\delta}))\bigr)	$$
\end{proposition}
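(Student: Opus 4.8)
The plan is to reduce this $S^1$-equivariant statement to the non-equivariant Proposition \ref{prop:homCII} which we already have at our disposal. Recall that after the Morse--Bott perturbation $H_{2\delta}$ of an autonomous stair Hamiltonian $H_2\in\Hs_{stair}(V,W)$, the $S^1$-equivariant complex splits as the tensor product $\Z[u]\otimes_{\Z} SC_*(H_{2\delta})$ with differential $\widehat\partial^{S^1}=\varphi_0+u^{-1}\varphi_1+u^{-2}\varphi_2+\cdots$, where $\varphi_j$ counts solutions of the parametrized Floer equations \eqref{eq:continuationS12} running from a critical orbit over $z_j$ to one over $z_0$. The key geometric input I would establish first is the analogue of the confinement statement in Proposition \ref{prop:homCII}: no solution $(u,z)$ of the parametrized Floer equation that starts and ends at orbits lying in regions \rom{1} or \rom{2} (resp.\ region \rom{2}) can leave $U\cup V$.

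First I would observe that the $z$-component of such a solution is a negative gradient trajectory of $\tilde f_N$ and plays no role in the maximum-principle argument; only the $u$-component matters, and it satisfies a Floer equation for the $z(s)$-dependent Hamiltonian $X_{H_{2\delta,z(s)}}$. By the construction of $S^1$-invariant Hamiltonians from elements of $\Hstd$ (Definition \ref{constrS1inv}), near each slice and in the region $\partial V\times[0,\rho_0]$ the Hamiltonian $H_{2\delta,z}$ is, up to a $C^2$-small Morse--Bott perturbation, a $\theta$-reparametrization of the autonomous convex function $h(r)$; crucially its slope $\beta$ and the geometry outside $U\cup V$ are independent of $z$. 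Thus, restricting $u$ to $S:=u^{-1}(W')$ where $W'=\widehat W\setminus(V\cup(\partial V\times[0,\rho_0+\epsilon[))$ for a regular value $\rho_0+\epsilon$, the equation for $u$ is of the form $(du-X_{H_s}\otimes d\theta)^{0,1}=0$ with $H_s=H_{2\delta,z(s)}$ a family of (perturbations of) convex increasing functions of $r$ near $\partial W'$. Part b of Theorem \ref{thm:abouzaid} then applies verbatim and forces $u$ to be contained in $\partial W'$, the desired contradiction. One subtlety is that $H_{2\delta}$ is only $C^2$-close to $h(r)$ rather than exactly equal; I would absorb this by taking $\delta$ small, since the maximum principle only requires $h''-h'>0$ which is an open condition, or alternatively by noting that the perturbation is supported away from the relevant slices.

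Granting the confinement, the rest is bookkeeping. The subcomplex $\Z[u]\otimes_{\Z}C^{\rom{3},\rom{4},\rom{5}}(H_{2\delta})$ is genuinely a subcomplex of $\Z[u]\otimes_{\Z}SC_*(H_{2\delta})$ because $\varphi_0$ strictly decreases the action (Proposition \ref{prop:actionS1bo}) and because there are no Floer trajectories from region \rom{3} into regions \rom{1} or \rom{2} by \cite[Lemma 2.3]{CO}; the same two facts, applied to the parametrized equation, show that the $\varphi_j$ for $j\geq1$ also respect this filtration since the action of $u^k\otimes\gamma$ is $C^0$-close to the action of $\gamma$ (the extra $\tilde f_N$ term being $C^2$-small). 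Hence the quotient complex $\Z[u]\otimes_{\Z}C^{\rom{1},\rom{2}}(H_{2\delta})$ inherits the differential $\varphi_0+u^{-1}\varphi_1+\cdots$, and by the confinement statement each $\varphi_j$ on this quotient only counts solutions with image in $U\cup V$. These are exactly the parametrized Floer solutions for the function $\iota_V H_{2\delta}\in\Hstd(V)$ on $\widehat V$, because $\iota_V H_{2\delta}$ agrees with $H_{2\delta}$ on $V\cup(\partial V\times[0,\delta-\rho_0])$ and the modification is far from any confined solution. Therefore the quotient complex is isomorphic, as a complex with its $\Z[u]$-module structure and full differential, to $\Z[u]\otimes_{\Z}SC_*(\iota_V H_{2\delta})$ with its equivariant differential, and passing to the sub-level $C^{\rom{2}}$ (action $>\epsilon$) gives the positive version $SC^+$. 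Taking homology yields both asserted identities.

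The main obstacle, as in the non-equivariant case, is the confinement/maximum-principle step: one must be careful that the $z$-dependence of the Hamiltonian in the parametrized Floer equation does not spoil the hypotheses of Theorem \ref{thm:abouzaid}, i.e.\ that for every $z$ the Hamiltonian $H_{2\delta,z}$ is a convex increasing function of $r$ near the relevant contact-type hypersurface with $z$-independent slope, and that the almost complex structures $J_z$ all satisfy $J_z^*\lambda_W=dr$ there. Both hold by construction of the admissible $S^1$-invariant data (Definitions \ref{def:hamadmS1} and \ref{constrS1inv}), but this is the place where the argument could break if the perturbation were not chosen carefully; everything else is a routine transcription of the non-equivariant proof tensored with $\Z[u]$.
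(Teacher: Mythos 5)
There is a genuine gap at the crucial confinement step. You propose to apply part b of Theorem \ref{thm:abouzaid} ``verbatim'' to the family $H_s:=H_{2\delta,z(s)}$, but part b is a statement about \emph{increasing} homotopies: in its proof the term $\int_S u^{\star}\partial_sH^{\theta}_s\,ds\wedge d\theta$ is discarded precisely because $\partial_sH_s\geq 0$, and this sign is needed at every point of $u(S)\subset W'$, not only near $\partial W'$. For your family this monotonicity fails, or at least is unverified: on $W'$ the lifted Hamiltonian is genuinely $z$-dependent near the nonconstant orbits lying in regions $\rom{3}$ and $\rom{5}$ (there $H_{2\delta,N}(\theta,x,z)=H_{2\delta}(\theta-\phi_z,x)$), so $\partial_s H_{2\delta,z(s)}=\vec{\nabla}_zH\cdot\dot{z}(s)$ has no sign. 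The $z$-independence you invoke concerns only the slope and the behaviour near the slice $\partial V\times\{\rho_0+\epsilon\}$; it does not yield the global hypothesis of part b. Likewise, the ``subtlety'' you do address ($C^2$-closeness to $h(r)$, openness of $h''-h'>0$) is not the real obstruction. A repair along your lines is conceivable --- e.g.\ adding the $x$-independent term $\tilde{f}_N(z(s))$ to the Hamiltonian (which does not change $X_H$ nor the Floer equation) and using conditions 4 and 5 of Definition \ref{def:hamadmS1}, as in Proposition \ref{prop:actionS1bo}, to restore $\partial_s\geq 0$, then rechecking the boundary inequality $H\leq rh'(r)$ on $\partial W'$ --- but none of this appears in your argument, and as written the key estimate does not go through.

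The paper takes a different route, which also explains why the statement is only claimed ``for $\delta$ small enough'': no maximum principle is applied to the perturbed parametrized equation at all. One argues by contradiction in $\delta$: if confinement failed for all small $\delta$, the compactness result of Bourgeois--Oancea (\cite{BOduke}, Proposition 4.7) would produce, in the limit $\delta\to 0$, a broken parametrized trajectory for the autonomous stair Hamiltonian leaving $U\cup V$, and such trajectories are excluded by the maximum-principle argument already established in Proposition \ref{prop:homCII}. Your surrounding bookkeeping (action filtration, absence of trajectories from $\rom{3}$ to $\rom{1}$--$\rom{2}$, identification of the quotient with $\Z[u]\otimes_{\Z}SC(\iota_VH_{2\delta})$ and of the action-positive part with the $SC^+$ version) matches the paper's set-up and is fine; the gap is only, but crucially, in the confinement step.
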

\begin{proof}
What remains to be checked  is again  there is no parametrized Floer trajectory $u:\R\times S^1\rightarrow \widehat{W}$ going from an orbit in $C^{\rom{1},\rom{2}}(H_{2\delta})$ to an orbit in
$C^{\rom{1},\rom{2}}(H_{2\delta})$ with points in $\widehat{W}\setminus (U\cup V)${; this is due to the decomposition of ${\widehat{\partial}}^{S^1}$ mentioned above.}
This is proven by contradiction. If there was a parametrized trajectory going from an orbit in $C^{\rom{1},\rom{2}}(H_{2\delta})$ 
to an orbit in $C^{\rom{1},\rom{2}}(H_{2\delta})$ with points in $\widehat{W}\setminus (U\cup V)$ for all $\delta$'s,
then, by a theorem of Bourgeois and Oancea \cite[Proposition 4.7]{BOduke}, there would be such a broken trajectory for the autonomous Hamiltonian and we have proven in Proposition \ref{prop:homCII} that this can not exist.
\end{proof}

\noindent
To get a transfer map, we use  an autonomous increasing homotopy between $H_1$ and $H_2$ and we deform it
into an increasing homotopy between $H_{1\delta} $ 
and  $H_{2\delta}$;  this induces a map
$$
\Z[u]\otimes_{\Z}SC_*(H_{1\delta})\rightarrow \Z[u]\otimes_{\Z}SC_*(H_{2\delta}).
$$
This map decreases the action (which is defined on the second factor) and commutes with the differential so it
 induces a map on the quotient 
 $$
 H\big((\Z[u]\otimes_{\Z}SC_*(H_{1\delta},\partial\bigr))\rightarrow H\bigl( \Z[u]\otimes_{\Z}C^{\rom{2}}(H_{2\delta}),\partial\bigr).
 $$
 This commutes with continuation maps.
 \begin{proposition}
For $\delta$ small enough, a continuation map in the homology defined from an $H_{2\delta}$ induces a continuation 
continuation map in the homology defined from $\iota_VH_{2\delta}$.
\end{proposition}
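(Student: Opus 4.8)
The plan is to repeat, in the parametrized setting, the argument of Proposition \ref{prop:transfer2}, combining the degeneration technique of Bourgeois and Oancea with the autonomous maximum principle of Theorem \ref{thm:abouzaid}. Let $H_{2\delta}$ and $H'_{2\delta}$ be Morse--Bott perturbations of autonomous stair Hamiltonians $H_2,H'_2\in\Hs_{stair}(V,W)$, joined by the increasing homotopy obtained by deforming an autonomous increasing homotopy between $H_2$ and $H'_2$ and then lifting it to an $S^1$-invariant homotopy. The induced continuation map is computed by counting solutions $(u,z)$ of the parametrized equations \eqref{eq:continuationS12}, with $u:\R\times S^1\to\widehat W$ and $z:\R\to S^{2N+1}$; since the action decreases along such solutions by Proposition \ref{prop:actionS1bo}, it suffices to show that for $\delta$ small enough no such solution runs from an orbit in $C^{\rom{1},\rom{2}}(H_{2\delta})$ to an orbit in $C^{\rom{1},\rom{2}}(H'_{2\delta})$ while having image $u(\R\times S^1)$ meeting $\widehat W\setminus(U\cup V)$. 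Granted this, the continuation map descends to the quotient complexes $\Z[u]\otimes_{\Z}C^{\rom{1},\rom{2}}(H_{2\delta})$ (resp. $\Z[u]\otimes_{\Z}C^{\rom{2}}(H_{2\delta})$), which is exactly the assertion.

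Suppose not. Then there are $\delta_n\to0$ and parametrized continuation trajectories $(u_n,z_n)$ of the above type whose images meet $\widehat W\setminus(U\cup V)$. By the compactness result \cite[Proposition 4.7]{BOduke} one may pass to a subsequence converging to a broken parametrized configuration for the autonomous limit data: finitely many autonomous Floer cylinders for $H_2$ or for $H'_2$, one autonomous continuation trajectory for the autonomous homotopy between $H_2$ and $H'_2$, and Morse gradient trajectories of $f_N$ on $\C P^N$, glued at the breaking points. The $\C P^N$-components say nothing about $\widehat W$, and since every $u_n$ reaches into $\widehat W\setminus(U\cup V)$ while only finitely many breakings occur, at least one $\widehat W$-component $v$ of the limit has image meeting $\widehat W\setminus(U\cup V)$.

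Now $v$ is either an autonomous Floer cylinder for one of the stair Hamiltonians $H_2,H'_2$, or an autonomous continuation trajectory for the autonomous increasing homotopy between them. Picking a regular value $\rho_0+\epsilon$ of $\rho\circ v$ and restricting to $W':=\widehat W\setminus\bigl(V\cup(\partial V\times[0,\rho_0+\epsilon[)\bigr)$ --- an exact symplectic manifold with contact-type boundary and Liouville vector field pointing inward, on which the relevant Hamiltonians are convex increasing functions of $r=e^{\rho}$ near the boundary since we are in region $\rom{2}$ --- the first case contradicts part a of Theorem \ref{thm:abouzaid}, exactly as in the proof of Proposition \ref{prop:homCII}, while the second contradicts part b of Theorem \ref{thm:abouzaid}, exactly as in the proof of Proposition \ref{prop:transfer2}. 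Either way we reach a contradiction, so for $\delta$ small enough no offending trajectory exists and the continuation map passes to the quotient.

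The step I expect to be the main obstacle is the precise use of \cite[Proposition 4.7]{BOduke} in the present ``stair'' situation: one must ensure that the $\delta_n\to0$ limit of the parametrized continuation trajectories really is a broken autonomous configuration of the type described, and in particular that the part of each $u_n$ lying in $\widehat W\setminus(U\cup V)$ does not vanish in the limit (through energy concentrating into a bubble, which exactness forbids, or through the offending arc drifting off to an end), so that an honest autonomous $\widehat W$-component meeting $\widehat W\setminus(U\cup V)$ survives, to which Theorem \ref{thm:abouzaid} applies. Once this is secured, the remainder is a transcription of the arguments already given for Propositions \ref{prop:homCII} and \ref{prop:transfer2}.
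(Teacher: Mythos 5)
Your proposal is correct and follows essentially the same route as the paper: argue by contradiction, use the Bourgeois--Oancea compactness result \cite[Proposition 4.7]{BOduke} to extract, from offending parametrized trajectories existing for all small $\delta$, a broken autonomous configuration, and then rule out any autonomous component leaving $U\cup V$ by the maximum principle of Theorem \ref{thm:abouzaid} as already used in Propositions \ref{prop:homCII} and \ref{prop:transfer2}. The paper's own proof is just a terser statement of exactly this argument.
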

\begin{proof}
	One checks again that there is no parametrized Floer trajectory, corresponding to a homotopy, going from an orbit in $C^{\rom{1},\rom{2}}(H_{2\delta})$
	to an orbit in $C^{\rom{1},\rom{2}}(H'_{2\delta})$ with points in $\widehat{W}\setminus (U\cup V)$. 
	This is done as in the former proposition, using the fact that the existence of such a trajectory for all $\delta$'s
	would imply the existence of  such a broken trajectory for the autonomous Hamiltonian and we have proven in Proposition \ref{prop:transfer2} that this can not exist.
	\phantom{coucou}
\end{proof}

\noindent We thus get a transfer morphism
$$
\phi^{S^1}_{W,V} : SH^{S^1}(W,\lambda_W)\rightarrow SH^{S^1}(V,\lambda_V).
$$
and, on the quotient, the morphism
$$
\phi^{S^1,+}=\phi^{S^1,+}_{W,V}: SH^{S^1,+}(W,\lambda_W)\rightarrow SH^{S^1,+}(V,\lambda_V).
$$
By the same arguments as before, those morphisms compose nicely.
\begin{theorem}[Composition]\label{thm:fonctS1}
	Let $(V_1,\lambda_{V_1}) \subseteq (V_2,\lambda_{V_2})\subseteq (V_3,\lambda_{V_3})$ be Liouville domains with Liouville embeddings.
	Then the following diagram commutes:
	\begin{equation}\label{diagcommutatifSHS1+}
		\xymatrix{
			SH^{S^1,+}(V_3,\lambda_{V_3}) \ar[r]^{\phi^{S^1,+}_{V_3,V_2}} \ar@/_2pc/[rr]^{\phi^{S^1,+}_{V_3,V_1}} & SH^{S^1,+}(V_2,\lambda_{V_2}) \ar[r]^{\phi^{S^1,+}_{V_2,V_1}} & SH^{S^1,+}(V_1,\lambda_{V_1}).
		}
	\end{equation}
\end{theorem}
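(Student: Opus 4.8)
The plan is to mimic, in the $S^1$-equivariant setting, the proof of Theorem \ref{thm:compos}, reducing the statement about the composed transfer morphisms to a purely homotopy-theoretic statement about Floer continuation data. First I would fix admissible autonomous Hamiltonians $H_1\in\Hstd(V_3)$, together with a stair Hamiltonian $H_2\in\Hs_{stair}(V_2,V_3)$ adapted to the inner domain $V_2$, and a \emph{two-step} stair Hamiltonian $H_3$ adapted to the pair $V_1\subset V_2\subset V_3$ (so that $H_3$ has one ``plateau'' near $\partial V_2$ and one near $\partial V_1$). I would then pass to small Morse--Bott perturbations $H_{1\delta}, H_{2\delta}, H_{3\delta}$ and lift everything to the $S^1$-equivariant setting, using the simplified complex $\Z[u]\otimes_\Z SC_*(H_{i\delta})$ with differential $\widehat\partial^{S^1}=\varphi_0+u^{-1}\varphi_1+\cdots$ as in Sections \ref{section:specialcomplex} and \ref{section:computS1}. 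The key point is that the filtration by regions $\rom{1},\ldots,\rom{5}$ survives tensoring with $\Z[u]$, and that by the propositions just established the relevant quotient complexes $\Z[u]\otimes_\Z C^{\rom{2}}(H_{i\delta})$ compute $SH^{S^1,+}$ of the corresponding small Liouville domain.

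Next I would build the two families of chain-level data whose induced maps must be compared. On one side: an increasing homotopy $H_{13}$ from $H_{1\delta}$ to $H_{3\delta}$, deformed from an autonomous one, giving a parametrized Floer continuation map $\Z[u]\otimes_\Z SC_*(H_{1\delta})\to\Z[u]\otimes_\Z SC_*(H_{3\delta})$; since it decreases the action (which lives on the $SC$ factor, the $u^k\otimes\gamma$ having action close to $\mathcal A(\gamma)$) and commutes with $\widehat\partial^{S^1}$, it descends to $\Z[u]\otimes_\Z C^{\rom{1},\rom{2}}(H_{3\delta})$ and hence to the region-$\rom{2}$ quotient, inducing $\phi^{S^1,+}_{V_3,V_1}$. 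On the other side: the concatenation of an increasing homotopy $H_{12}$ from $H_{1\delta}$ to $H_{2\delta}$ and an increasing homotopy $H_{23}$ from $H_{2\delta}$ to $H_{3\delta}$, inducing $\phi^{S^1,+}_{V_2,V_1}\circ\phi^{S^1,+}_{V_3,V_2}$ after projecting to the appropriate quotients at each stage. I would then invoke the homotopy-of-homotopies theorem: the concatenation $H_{12}\#H_{23}$ is homotopic through increasing homotopies to $H_{13}$, so the two composed chain maps are chain homotopic, and the chain homotopy again respects the action filtration and the $\Z[u]$-module structure, hence descends to the region-$\rom{2}$ quotients. This yields commutativity of \eqref{diagcommutatifSHS1+} after passing to the direct limits over $\delta$, over the cofinal family of Hamiltonians, and over $N$.

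The main obstacle I anticipate is not the formal homotopy argument but controlling the parametrized Floer trajectories during the homotopies so that they stay in $U\cup V_2$ (respectively $U\cup V_1$) and the region decomposition of the complexes is genuinely preserved along $H_{12}\#H_{23}\simeq H_{13}$. Concretely, one must rule out, for all small $\delta$, parametrized Floer solutions for the homotopy-of-homotopies that leave the inner Liouville domain; as in the proofs of Proposition \ref{prop:homCII}, Proposition \ref{prop:transfer2} and the two propositions above in this subsection, this is handled by contradiction: such a family of offending trajectories would, by \cite[Proposition 4.7]{BOduke}, converge as $\delta\to0$ to a broken parametrized trajectory for the autonomous data, which is then excluded by the generalized Abouzaid maximum principle of Theorem \ref{thm:abouzaid}(b) applied on $W':=\widehat{V_3}\setminus\bigl(V_2\cup(\partial V_2\times[0,\rho_0+\epsilon[)\bigr)$ (and, for the inner step, on the analogous region around $\partial V_1$). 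Once this confinement is in place, the remaining verification --- that the chain homotopy is $\Z[u]$-linear modulo lower $u$-order terms and action-decreasing --- is routine, exactly parallel to the non-equivariant case, and the theorem follows.
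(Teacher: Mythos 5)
Your proposal is correct and follows essentially the same route as the paper: the paper proves the non-equivariant composition (Theorem \ref{thm:compos}) by comparing trajectory counts for a direct increasing homotopy to a two-step stair Hamiltonian against the concatenation of two homotopies, and then obtains the $S^1$-equivariant statement ``by the same arguments,'' using exactly the ingredients you invoke --- the simplified $\Z[u]\otimes_\Z SC_*$ complexes, action filtration, and confinement of parametrized trajectories via the Bourgeois--Oancea limiting argument for small $\delta$ together with the generalized Abouzaid maximum principle. Your write-up simply spells out details the paper leaves implicit, so no comparison or correction is needed.
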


\subsection{Invariance of symplectic homology}

In this section, we study the invariance of the ($S^1$-equivariant) positive symplectic homology with respect to the choice of the Liouville vector field in a neighbourhood of the boundary. This  has been studied by Viterbo \cite{V}, Cieliebak \cite{C} and Seidel \cite{Seidel} in the case of the  symplectic homology.
\begin{lemma}
	Let $(W,\w,X)$ be a compact symplectic manifold with contact type boundary and let $k$ be a positive real number.
	Then $
		SH^\dagger(W,\w,X) = SH^\dagger(W,k\w,X)$,
	where $\dagger$ denotes any of the variants that we have considered $\emptyset,+, S^1$ or $(S^1,+)$.
\end{lemma}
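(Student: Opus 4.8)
The plan is to exhibit an explicit identification of the Floer data defining $SH^\dagger(W,\w,X)$ with those defining $SH^\dagger(W,k\w,X)$, so that the chain complexes and all continuation maps (hence all the direct limits and quotients) match on the nose. The key observation is that rescaling the symplectic form by a positive constant $k$ rescales the Liouville form $\lambda=\iota(X)\w$ by $k$ as well, and therefore rescales the contact form $\alpha$ on $M=\partial W$ by $k$; consequently the Reeb vector field gets divided by $k$, and $\Spec(M,k\alpha)=k\cdot\Spec(M,\alpha)$. Moreover the completion $\widehat{W}$ for $(W,k\w)$ is canonically identified with that of $(W,\w)$: the cylindrical coordinate $\rho$ is unchanged and $\widehat{k\w}=d(e^\rho\,k\alpha)=k\,\widehat{\w}$.

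The first step is to set up the bijection on Hamiltonians. Given $H\in\Hstd(W,\w)$ with slope $\beta\notin\Spec(M,\alpha)$, I would associate $\widetilde H:=kH$; then $\widetilde H$ has slope $k\beta\notin\Spec(M,k\alpha)$, is still negative and $C^2$-small on $S^1\times W$, and is $C^2$-close to $k\,h(e^\rho)$ on the intermediate region, with $k h$ again convex increasing. So $H\mapsto kH$ is a bijection $\Hstd(W,\w)\to\Hstd(W,k\w)$ respecting the partial order. The second step is the orbit correspondence: since $\widehat{k\w}(X_{\widetilde H}^\theta,\cdot)=d\widetilde H(\theta,\cdot)=k\,dH(\theta,\cdot)$ while $\widehat{k\w}=k\widehat\w$, one gets $X_{\widetilde H}^\theta=X_H^\theta$, so the two Hamiltonians have literally the same $1$-periodic orbits, with the same nondegeneracy and the same Conley--Zehnder indices (the linearised flow is identical). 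The actions scale by $k$: $\A_{\widetilde H}(\gamma)=k\,\A_H(\gamma)$, which in particular preserves the sign relative to $0$ and to $\epsilon$ (after rescaling the cutoff $\epsilon\mapsto k\epsilon$), so the subcomplex $SC^{\leq\epsilon}$ of constants is preserved and the positive quotient is identified. The third step handles the almost complex structures: if $J\in\J$ for $\w$, the same $J$ is compatible with $k\w$ (compatibility is a pointwise open condition preserved by positive rescaling) and satisfies the same cylindrical conditions at infinity, because $\xi$, $R_\alpha/k$ and $\partial_\rho$ are unchanged up to the harmless constant $k$; absorbing that constant into the metric, Floer's equation \eqref{Floer} is literally the same equation, so the moduli spaces, their dimensions, the coherent orientations and the counts all agree. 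Hence $(SC(H,J),\partial)=(SC(kH,J),\partial)$ and likewise for the continuation maps, giving $SH_*(W,\w,X)=SH_*(W,k\w,X)$ and the same for $SH^+$.

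For the $S^1$-equivariant versions ($\dagger=S^1$ or $(S^1,+)$) I would run the same argument one level up: the auxiliary Morse functions $f_N$ on $\mathbb{C}P^N$ and the metrics $\bar g_N$ do not involve $\w$ at all and are kept fixed; an admissible $S^1$-invariant Hamiltonian $H$ for $\w$ is sent to $kH$ (with $f_N$ left unscaled, or equivalently one rescales the perturbation parameter), the parametrised action scales by $k$ on the symplectic part, the critical circles $\Per^{S^1}$ and the indices $\mu(S_{\gamma,z})=-\mu_{CZ}(\gamma)+\mu_{Morse}(z;-\tilde f_N)$ are unchanged, and the coupled equations \eqref{eq:continuationS12} reduce to the same system since the $z$-equation is untouched and the $u$-equation is unchanged by the computation above. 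So the simplified complexes $S\widehat C^{S^1}_*(H')$ of Proposition \ref{shortS1}, the maps $\varphi_j$, the filtration by the action, and the direct limits over $N$ and over $H$ all transport verbatim, yielding the isomorphism for the $S^1$-equivariant and positive $S^1$-equivariant theories.

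\textbf{Main obstacle.} Nothing here is deep; the one point requiring a little care is bookkeeping with the constants. Rescaling $\w$ forces simultaneous rescalings of $\lambda$, $\alpha$, $R_\alpha$, $\Spec(M,\alpha)$, the slopes $\beta$, the action, and the cutoff $\epsilon$, and one must check that every admissibility condition in Definitions \ref{def:Hstd} and \ref{def:hamadmS1} and every inequality used to split off the subcomplex of constants (the discussion preceding the definition of positive symplectic homology, and Proposition \ref{prop:actionS1bo}) survives these rescalings coherently. Once one records the single clean fact $X_{kH}=X_H$ together with $\A_{kH}=k\A_H$, all of this is routine, and the cleanest write-up is simply to say that $H\mapsto kH$, $J\mapsto J$ is an isomorphism of the whole Floer package.
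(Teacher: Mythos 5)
Your proposal is correct and follows essentially the same route as the paper: the paper's proof also rests on the single identity $X^{\w}_{H}=X^{k\w}_{kH}$, pairs $(H,J)$ with $(kH,J)$ so that the chain complexes and continuation maps coincide, and concludes by cofinality of the family $\{kH\}$. Your write-up merely spells out the bookkeeping (actions, $\epsilon$, slopes, and the $S^1$-equivariant lift) that the paper leaves implicit.
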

\begin{proof}
	The symplectic completions are $(\widehat{W},\widehat{\w})$ and $(\widehat{W},k\widehat{\w})$; the chain complexes for a pair $(H,J)$ on $(\widehat{W},\widehat{\w})$ and the pair
	$(kH,J)$ on $(\widehat{W},k\widehat{\w})$ are the same, since the $1$ periodic orbits are the same, and the Floer trajectories satisfy the same equations; indeed $X_{H}^{\w} = X_{kH}^{k\w}$. Similarly, continuation maps are equivalent taking as homotopies $H_s$ and $kH_s$.
The result follows, observing that  $kH$ form a cofinal family.
\end{proof}

\noindent
For positive or $S^1$-equivariant positive homology, we  assume that $(W,\w,X)$ is a Liouville domain.
\begin{lemma}\label{lem:invsymplecto}
	Let $(W,\w, X)$ and $(W',\w',X')$ be two compact symplectic manifolds with contact type boundary. 	If there exists a symplectomorphism $\varphi:W\rightarrow W'$ such that $\varphi(\partial W) = \partial W'$, and such that $\varphi_{\star}(X)=X'$ on a neighbourhood of $\partial W$
	then $
		SH^\dagger(W,\w,X)\cong SH^\dagger(W',\w',X')$.
\end{lemma}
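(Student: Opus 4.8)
The plan is to reduce the statement to the already-proven invariance results by showing that a symplectomorphism $\varphi$ which matches the Liouville vector fields near the boundary induces an isomorphism on the chain level, compatible with all the limits that define $SH^\dagger$.

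First I would observe that $\varphi$ extends to a symplectomorphism $\widehat{\varphi}:\widehat{W}\rightarrow\widehat{W'}$ of the completions. Indeed, since $\varphi_\star X = X'$ on a neighbourhood of $\partial W$, the map $\varphi$ carries the collar $(\partial W\times[-\delta,0], d(e^\rho\alpha))$ to the collar $(\partial W'\times[-\delta,0], d(e^\rho\alpha'))$ respecting the $\rho$-coordinate and identifying $\alpha$ with $\alpha'$ via $\varphi|_{\partial W}$; one then extends by $\varphi|_{\partial W}\times\id$ on $M\times\R^+$. This $\widehat\varphi$ satisfies $\widehat\varphi^\star\widehat{\w'}=\widehat\w$, and in the exact case $\widehat\varphi^\star\widehat{\lambda'}=\widehat\lambda$ near and beyond the boundary (possibly up to an exact form supported in the interior, which does not affect actions of loops since $W$ is atoroidal).

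Next I would transport all the auxiliary data. Given an admissible Hamiltonian $H\in\Hstd(W)$ and admissible $J$, the pushforwards $H':=H\circ(\id_{S^1}\times\widehat\varphi^{-1})$ and $J':=\widehat\varphi_\star J$ are admissible on $\widehat{W'}$ (the slope, the conditions on $\xi$ and $R_\alpha$ at infinity, and nondegeneracy are all preserved because $\widehat\varphi$ is a symplectomorphism intertwining the Reeb dynamics at infinity). Then $\gamma\mapsto\widehat\varphi\circ\gamma$ gives a bijection $\Per(H)\to\Per(H')$ preserving the Conley–Zehnder index, and $u\mapsto\widehat\varphi\circ u$ gives a bijection between Floer trajectories for $(H,J)$ and for $(H',J')$ (equation \eqref{Floer} is natural under symplectomorphisms since $\widehat\varphi_\star X_H^\theta = X_{H'}^\theta$). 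Hence $\widehat\varphi$ induces a chain isomorphism $SC(H,J)\cong SC(H',J')$ commuting with the differential and with continuation maps (transport the homotopies $H_s$ as well). Since $H\leq H_2$ iff $H'\leq H_2'$ and $H\mapsto H'$ is a bijection between cofinal families, passing to the direct limit gives $SH(W,\w,X)\cong SH(W',\w',X')$. For the positive version, the isomorphism preserves the action filtration (the action $\A_H(\gamma)=\A_{H'}(\widehat\varphi\circ\gamma)$ because $\widehat\varphi^\star\widehat\lambda'=\widehat\lambda$ and $H'\circ\widehat\varphi=H$), so it maps the subcomplex of low-action orbits isomorphically and descends to $SH^+$. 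For the $S^1$-equivariant versions one transports the $S^1$-invariant lifts $H_N$ and the parametrized almost complex structures $J_N$ by $\widehat\varphi$ (the reparametrization $S^1$-action on the loop space is untouched, and $\widehat\varphi$ commutes with it since it acts pointwise on $\widehat W$), obtaining a chain isomorphism of the simplified complexes $\Z[u]\otimes SC_*(H_\delta)$ compatible with $\widehat\partial^{S^1}$, with the filtration by $u$-degree and by action, and with the maps induced by $S^{2N+1}\hookrightarrow S^{2N+3}$; taking both direct limits yields the isomorphism for $\dagger\in\{S^1,(S^1,+)\}$.

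The main subtlety — rather than a genuine obstacle — is the bookkeeping of the $1$-form beyond the boundary: $\widehat\varphi$ need only intertwine $\widehat\lambda$ and $\widehat{\lambda'}$ near and outside $\partial W$, and on the interior $\widehat\varphi^\star\widehat{\lambda'}-\widehat\lambda$ is a closed $1$-form on a manifold where we must ensure it is exact so that actions of contractible loops are unchanged; this is where the atoroidal (indeed simply-connected-enough in the relevant sense) hypothesis from Definition \ref{def:setup} is used, exactly as in the identification of $1$-periodic orbits with critical points in the definition of $SH^+$. With that in hand, every structure map in the towers of direct limits defining $SH^\dagger$ is intertwined by $\widehat\varphi$, and the claimed isomorphism follows.
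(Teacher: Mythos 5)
Your proposal is correct and follows essentially the same route as the paper: extend $\varphi$ to a symplectomorphism $\widehat{\varphi}$ of the completions, transport Hamiltonians and almost complex structures by $\widehat{\varphi}$ so that $1$-periodic orbits and Floer trajectories are in bijection, and conclude by cofinality of the transported family in the direct limit (the paper phrases this by pulling back $H'$ and $J'$ from $\widehat{W'}$, which is the same argument). Your extra care about the action filtration and the $S^1$-equivariant lifts is consistent with, and slightly more detailed than, what the paper records; note only that in the exact case $\widehat{\varphi}^{\star}\widehat{\lambda'}-\widehat{\lambda}$ is merely closed, which already suffices since contractible loops see no difference in action.
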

\begin{proof}
	We can extend $\varphi$ to a symplectomorphism $\widehat{\varphi}:\widehat{W}\rightarrow\widehat{W'}$ of the completions.
	 For $J'$ an almost complex structure on $\widehat{W'}$, we  take the corresponding almost complex structure $J$ on $\widehat{W}$ defined by $
		J_x := {\widehat{\varphi}^{-1}_{\star_x}} \circ J'_{\widehat{\varphi}(x)}\circ {\widehat{\varphi}_{\star_x}}$
and if $H'$ is a Hamiltonian on $\widehat{W'}$, we take  the Hamiltonian $H$ on $\widehat{W}$ defined by $H:=\widehat{\varphi}^{\star}H'$.
	Then the $1$ periodic orbirs are in bijection and so are the Floer trajectories.  The subfamily $\{\widehat{\varphi}^{\star}H'\}$ of Hamiltonians is cofinal 
	hence the conclusion.
\end{proof}
\begin{lemma}\label{lem:de^tlambda0}
	Let $(W,\lambda)$ be a Liouville domain.	Then for all $R\in\R^+$, we have
	\[
		SH^\dagger(W,\lambda ) \cong SH^\dagger\bigl(W\cup (\partial W\times[0,R]),\lambda'\bigr)
	\]
	where the $1$-form $\lambda'$ on $\partial W\times[0,R]$ is the restriction of the $1$-form $\widehat{\lambda}$, thus $(e^{\rho}\alpha)$ with $\alpha:=\lambda_{\vert_{\partial W}}$.\end{lemma}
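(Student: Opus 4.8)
The plan is to exhibit $W':=W\cup(\partial W\times[0,R])$, equipped with $\lambda':=\widehat\lambda_{\vert_{W'}}$, as the image of $W$ under the time-$R$ Liouville flow inside the completion $\widehat W$, and then to feed the resulting symplectomorphism into the two lemmas already proved in this subsection. Write $M:=\partial W$, $\alpha:=\lambda_{\vert_M}$, let $\widehat X$ be the Liouville vector field of $\widehat\lambda$ on $\widehat W$ (so $\widehat X=X$ on $W$ and $\widehat X=\partial_\rho$ on $M\times\R^+$), and let $\psi_t:=\varphi^{\widehat X}_t$ be its flow. First I would check that for every $t\ge0$ the map $\psi_t$ is a diffeomorphism of $\widehat W$ with $\psi_t(W)=W\cup(M\times[0,t])$: the forward flow is complete since on the end it merely translates $\rho\mapsto\rho+t$; it is injective; and because $\widehat X$ points strictly outward along $M$, the backward flow issued from any point of $W$ remains in the compact set $W$, so $W\subseteq\psi_t(W)$; as $\psi_t(\partial W)=M\times\{t\}$ and $\psi_t(W)$ is a compact region of $\widehat W$ with boundary $M\times\{t\}$ containing $W$, it equals $W\cup(M\times[0,t])$.

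Next I would record the effect of $\psi_t$ on the Liouville form. From $\mathcal{L}_{\widehat X}\widehat\lambda=\widehat\lambda$ one gets $\tfrac{d}{dt}\psi_t^{*}\widehat\lambda=\psi_t^{*}\widehat\lambda$, hence $\psi_t^{*}\widehat\lambda=e^{t}\widehat\lambda$. Taking $t=R$ and restricting to $W$, the diffeomorphism $\psi_R\colon W\to W'$ satisfies $\psi_R(\partial W)=\partial W'$ and $\psi_R^{*}\lambda'=e^{R}\lambda$, hence $\psi_R^{*}(d\lambda')=e^{R}d\lambda$. Moreover the Liouville vector field $X'$ of $\lambda'$ on $W'$ is the restriction of $\widehat X$, so $X'=X$ on $W$ and $X'=\partial_\rho$ on the collar, and consequently $\psi_{R*}X=\widehat X=X'$ in a neighbourhood of $\partial W'$. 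In other words $\psi_R$ is a symplectomorphism $(W,e^{R}d\lambda,X)\rightarrow(W',d\lambda',X')$ carrying the Liouville vector field to the Liouville vector field near the boundary.

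Finally I would assemble the two inputs. By Lemma \ref{lem:invsymplecto} applied to $\psi_R$,
\[
	SH^{\dagger}(W,e^{R}d\lambda,X)\cong SH^{\dagger}(W',d\lambda',X')=SH^{\dagger}\bigl(W\cup(\partial W\times[0,R]),\lambda'\bigr),
\]
while the scaling invariance $SH^{\dagger}(W,\w,X)=SH^{\dagger}(W,k\w,X)$ proved above, with $k=e^{R}$, gives $SH^{\dagger}(W,e^{R}d\lambda,X)=SH^{\dagger}(W,d\lambda,X)=SH^{\dagger}(W,\lambda)$. Composing these two isomorphisms proves the lemma; since $W$ and $W'$ are both Liouville domains this is legitimate for every $\dagger\in\{\emptyset,+,S^{1},(S^{1},+)\}$. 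There is no deep difficulty here: the only point deserving attention — and the reason one cannot simply say ``the two completions coincide, hence so do the homologies'' at the level of the admissible Hamiltonians for $W$ and for $W'$ (whose cofinal families do not interleave in the obvious way) — is that $\psi_R$ does not preserve $\lambda$ but rescales it by $e^{R}$, and that single scalar is precisely what the rescaling lemma is there to absorb.
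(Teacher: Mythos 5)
Your proof is correct and follows essentially the same route as the paper: flow $W$ forward for time $R$ along the (completed) Liouville vector field to obtain a symplectomorphism $(W,e^{R}d\lambda)\to\bigl(W\cup(\partial W\times[0,R]),d\lambda'\bigr)$ matching Liouville fields near the boundary, then conclude by combining the scaling lemma with Lemma \ref{lem:invsymplecto}. The paper's write-up only adds an explicit description of the induced bijection of orbits, Floer trajectories and continuation maps, which your argument subsumes via those two lemmas.
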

\begin{proof}
	Denote by $\varphi_t^X$ the flow of $X$;  since $ \mathcal{L}_X\lambda=\lambda$ we have ${\varphi_t^X}^{\star}\lambda = e^t\lambda$.
	This gives a symplectomorphism
	\[
		\varphi_R^X: (W,e^R\w)\rightarrow\bigl(W\cup(\partial W\times[0,R]),\w'\bigr)
	\]
mapping the boundary $\partial W$ to the boundary $\{R\}\times \partial W$ and such that ${\varphi_R^X}^*\lambda=e^R\lambda$.
	One concludes by the two lemmas above.\\
	Explicitly, the diffeomorphism $\varphi_R^X: \widehat{W}\rightarrow \widehat{W}$ maps Hamiltonian vector fields as follows : $(\varphi_R^X)_*(X_{H'})=X_H$ when $H'=e^{-R}(\varphi_R^X)^*H$; hence $\varphi_R^X$  gives a bijection between $1$-periodic orbits of $X_{H'}$ and $1$-periodic orbits of $X_{H}$, and,  with suitable choices of $J$'s, a bijection between Floer trajectories between  $1$-periodic orbits of $X_{H'}$ and Floer trajectories between $1$-periodic orbits of $X_{H}$. Hence it yields an isomorphism
	\[
	SH^\dagger(W,e^{-R}(\varphi_R^X)^*H)\cong SH^\dagger\bigl(W\cup (\partial W\times[0,R]), H \bigr).
	\]
	 Furthermore, the diffeomorphism $\varphi_R^X$ intertwines  a continuation morphism defined by a homotopy  $H'_s$ to the corresponding continuation morphism defined by $H_s$ when again $H'_s=e^{-R}(\varphi_R^X)^*H_s$. This yields the isomorphism
mentioned above.
	\end{proof}
\begin{lemma}\label{lem:identitytransfer}
The transfer morphism  $$
		 SH^\dagger\bigl(W\cup (\partial W\times[0,R]),\lambda'\bigr)\mapsto SH^\dagger(W,\lambda) $$
is an isomorphism; it coincides with the natural identification of Lemma \ref{lem:de^tlambda0}.
\end{lemma}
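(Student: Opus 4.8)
Write $W_R:=W\cup(\partial W\times[0,R])$ for the larger Liouville domain, so that the map under consideration is the transfer morphism $\phi_{W_R,W}\colon SH^\dagger(W_R,\lambda')\to SH^\dagger(W,\lambda)$. Since the identification furnished by Lemma \ref{lem:de^tlambda0} is already known to be an isomorphism, it suffices to prove that $\phi_{W_R,W}$ coincides with it. I would spell out the case $\dagger=\emptyset$; the cases $\dagger\in\{+,S^1,(S^1,+)\}$ follow by the same argument, using in addition the action estimate of Proposition \ref{prop:actionS1bo} and the $S^1$-equivariant constructions of Section \ref{sect:transferS1} (with $C^{\rom{1},\rom{2}}$ replaced by $C^{\rom{2}}$ in the positive versions).

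First I would set up the geometry and pick convenient data. Identify the completion $\widehat{W_R}$ with $\widehat W$ in the tautological way, so that $W_R$ becomes the sublevel set $\{\rho\le R\}$ of the collar $\partial W\times[0,+\infty[\,\subset\widehat W$, with $\widehat\lambda$ restricting to $\lambda'$ on $W_R$. By the last paragraph of the proof of Lemma \ref{lem:de^tlambda0}, the isomorphism there is induced by the Liouville flow $\varphi_R^X\colon\widehat W\to\widehat W$ together with the correspondence $H\mapsto H':=e^{-R}(\varphi_R^X)^{*}H$ on Hamiltonians and the matching correspondence of almost complex structures; this correspondence is a bijection on $1$-periodic orbits, on Floer trajectories and on continuation morphisms, and it multiplies the Hamiltonian action by $e^{-R}$, so that an $H$ admissible for $W_R$ of slope $\mu$ is sent to an $H'$ admissible for $W$ of slope $\mu e^{-R}$. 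Now, since $\phi_{W_R,W}$ is independent of the auxiliary choices and compatible with continuations (Propositions \ref{prop:homCII} and \ref{prop:transfer2} and the homotopy of homotopies theorem), I would fix an admissible $H_1$ for $W_R$ that is radial on the collar, $C^2$-small on $W$ and of slope $\mu$; a stair Hamiltonian $H_2\in\Hs_{stair}(W,W_R)$, radial on the collar, whose inner slope $\beta$ is large enough that $\mu<\beta(e^{\delta}-1)/e^{\delta}$ (hence automatically $\mu e^{-R}<\beta$) and which agrees with $H_1$ for $\rho$ large; and an increasing homotopy $(H_s)_{s\in\R}$ from $H_1$ to $H_2$ that is radial on the collar and stationary for $\rho$ large. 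Then $\iota_W H_2\in\Hstd(W)$ has slope $\beta$, and $\phi_{W_R,W}$ at this level is the composite of the chain map counting solutions of \eqref{eq:Floerhomotopy} that run from a $1$-periodic orbit of $H_1$ to a $1$-periodic orbit of $H_2$ in region \rom{1} or \rom{2}, with the identification $H\bigl(C^{\rom{1},\rom{2}}(H_2,J)\bigr)\cong SH(\iota_W H_2)$ of Proposition \ref{prop:homCII}.

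The core of the proof is then to show that this composite equals the continuation morphism $SH(H'_1)\to SH(\iota_W H_2)$ (which exists because $\mu e^{-R}\le\beta$) precomposed with the isomorphism $SH(H_1)\cong SH(H'_1)$ induced by $\varphi_R^X$. For this I would transport the homotopy $(H_s)$ and the projection onto $C^{\rom{1},\rom{2}}(H_2,J)$ along $\varphi_R^X$, obtaining an increasing homotopy $\bigl(e^{-R}(\varphi_R^X)^{*}H_s\bigr)_s$ on $\widehat W$ starting at $H'_1$; then, exactly as in the proofs of Propositions \ref{prop:homCII} and \ref{prop:transfer2}, the generalized Abouzaid maximum principle (part (b) of Theorem \ref{thm:abouzaid}), applied to the subdomain of $\widehat W$ obtained by cutting off just outside $\partial W$, prevents any contributing Floer trajectory from reaching the region where the transported stair differs from a linear truncation realizing $\iota_W H_2$; hence on the relevant quotient complex $\phi_{W_R,W}$ is computed by a genuine increasing homotopy from $H'_1$ to $\iota_W H_2$, and the homotopy of homotopies theorem identifies it with the continuation morphism. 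Passing to the direct limit over a cofinal family with $\mu\to\infty$ and $\beta$ comparable to $\mu$, the families $\{H'_1\}$ and $\{\iota_W H_2\}$ become cofinal in one another inside $\Hstd(W)$, so the continuation morphisms $SH(H'_1)\to SH(\iota_W H_2)$ are bonding maps of the direct system defining $SH(W)$ and induce the identity in the limit. Therefore $\phi_{W_R,W}$ agrees with the isomorphism of Lemma \ref{lem:de^tlambda0}, which proves both assertions.

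The step I expect to be the main obstacle is this transport argument. One must choose the stair Hamiltonian, the homotopy and the almost complex structures so that they simultaneously meet the slope constraints of Definition \ref{def:Hstair}, agree far in the completion, are radial where needed, and satisfy the hypotheses of part (b) of Theorem \ref{thm:abouzaid}; and one must check carefully that truncating the transported homotopy in regions \rom{3}, \rom{4} and \rom{5} produces no Floer trajectory escaping the region controlled by the maximum principle -- in other words, that the identification of Proposition \ref{prop:homCII} is natural with respect to $\varphi_R^X$.
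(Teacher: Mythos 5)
Your overall architecture (prove the statement at the level of a fixed Hamiltonian, then let the continuation maps die in the direct limit) is reasonable, but the core step does not work as described, and it is precisely where the whole content of the lemma sits. Transporting the increasing homotopy $(H_s)$ by $\varphi_R^X$ produces a homotopy from $H_1'$ to $H_2':=e^{-R}(\varphi_R^X)^{\star}H_2$, and $H_2'$ is \emph{not} $\iota_W H_2$, nor can it be modified into it in a region the trajectories avoid: $H_2'$ is a stair Hamiltonian adapted to the shrunken domain $\varphi^X_{-R}(W)\subset W$, so its region-\rom{1}/\rom{2} orbits sit near $\varphi^X_{-R}(\partial W)$, deep inside $W$, while the nonconstant orbits of $H_1'$ sit just outside $\partial W$. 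Every contributing continuation trajectory therefore traverses the annulus between $\varphi^X_{-R}(\partial W)$ and $\partial W$, and on exactly that annulus (and on the collar just outside $\partial W$, where the negative asymptotes live) $H_2'$ and $\iota_W H_2$ differ drastically ($H_2'$ carries the transported plateau and upper stair there, $\iota_W H_2$ is $C^2$-small, resp.\ linear of slope $\beta$). The Abouzaid maximum principle (Theorem \ref{thm:abouzaid}) can only exclude the region beyond a radial convex cut that contains none of the asymptotes; it cannot exclude a region that contains the negative asymptotes and separates the two families of asymptotes. Consequently you never obtain that $\phi_{W_R,W}$ is computed by ``a genuine increasing homotopy from $H_1'$ to $\iota_W H_2$'', and the homotopy-of-homotopies theorem cannot be invoked, since the two candidate homotopies do not even share the same endpoint. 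In effect, the factorization of the transfer map through a continuation morphism in $\Hstd(W)$ is equivalent to the lemma itself, so the argument is circular at this point. (The final step, that continuation morphisms between members of a cofinal family become the identity in the direct limit, is fine.)

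What is missing is a mechanism for actually computing the continuation map when the target is the source pushed by the Liouville flow. The paper supplies it differently: it uses the distinguished homotopy $H^1_s=e^{-f(s)}(\varphi^X_{f(s)})^{\star}H$, whose set of $1$-periodic orbits is independent of $s$, deforms it through the family $H^{\eta}_s$ starting at the constant homotopy $\eta=0$, and uses the cobordism of parametrized moduli spaces to conclude that for small flow time $\eta\le\eta_0$ the moduli space consists of exactly one rigid trajectory joining each orbit to its transported counterpart, so that the induced map is the tautological identification; the full collar $[0,R]$ is then covered by iterating this small-time statement and using the composition property (Theorem \ref{thm:compos}) of transfer morphisms. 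Some argument of this kind (or another way to pin down the continuation map along the Liouville flow, e.g.\ an action/index pinching argument) has to replace your maximum-principle step before the rest of your proof can go through.
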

\begin{proof}
	Let $H$ be an admissible Hamiltonian for $W\cup(\partial W\times[0,R])$.
	Consider the homotopy $H^1_s:=e^{-f(s)} {\varphi_{f(s)}^X}^{\star}H$ with $f:\R\rightarrow[0,R]$ a smooth function so that $H^1_{s}=H$ for large negative s and $H^1_{s}=\widetilde{H}:=e^{-R}(\varphi_R^X)^*H$ for large positive $s$.	The set of $1$ periodic orbits for $H^1_s$ is constant (since, as in the Lemma above, the diffeomorphism $\varphi_{f(s)}^X$ of the completion is a bijection between  $1$-periodic orbits of $X_{H^1_s}$ and
$1$-periodic orbits of $X_{H}$).
	This homotopy defines the ``transfer morphism''
	\[
		\phi : SH\bigl(W\cup(\partial W\times[0,R]),H\bigr)\rightarrow SH(W,\widetilde{H}).
	\]
	Let $\{H^{\eta}_s\}_{\eta\in[0,1]}$ be a family of homotopies (with non fixed endpoint) such that $H^0_s$ is the constant homotopy $H^0_s=H$  for  all $s$,  and such that all $H^{\eta}_s$ are of the form
	$e^{-f'(s,\eta)} {\varphi_{f'(s,\eta)}^X}^{\star}H$ with $f'(.,\eta) :\R\rightarrow[0,\eta R]$ and $f'(.,1)=f$. We have  $H^\eta_{+\infty}=e^{-\eta R} {\varphi_{\eta R}^X}^{\star}H=H^1_{f^{-1}(\eta R)}$.
	The set of $1$-periodic orbits of $H^{\eta}_s$ is in bijection with the set of orbits of $H$.
	We consider, for a given $\eta$, the space of Floer trajectories
         \[
		\Mod(H^{\eta}_s,J^{\eta}_s) := \bigcup_{\substack{(\gamma^{\eta}_-,\gamma^{\eta}_+)\in\Per(H^{\eta}_{-\infty})\times\Per(H^{\eta}_{+\infty})\\ \mu_{CZ}(\gamma^{\eta}_-)=\mu_{CZ}(\gamma^{\eta}_+)}}
		\Mod(\gamma^{\eta}_-,\gamma^{\eta}_+,H^{\eta}_s,J^{\eta}_s)
	\]
	and the parametrized moduli space $
		\Mod(\{H^{\eta}_s ,  J^{\eta}_s\}) := \bigcup_{\eta\in[0,1]}\Mod(H^{\eta}_s,J^{\eta}_s)$, 
	which could have boundaries for some $\eta\neq 0,1$.
	It defines  a cobordism between $\Mod(H^{0}_s,J^{0}_s)$ and $\Mod(H^{1}_s,J^{1}_s)$.
	Now $\Mod(H^{0}_s,J^{0}_s)=\Mod(H,J)$  is the space of constant trajectories $\{ \, u(s,\cdot) =\gamma_0 (\cdot)\,|\,\gamma_0\in\Per(H)\}$.
	Thus for small $\eta$'s, say $\eta\le \eta_0$,  the cobordism is a bijection, $\Mod(H^{\eta}_s,J^{\eta}_s)$ consists
	of  exactly one Floer  trajectory starting from each orbit in $\Per(H)$ and arriving at the corresponding
	orbit in $ \Per(H^\eta_{+\infty})$.
	The morphism induced by $H^{\eta_0}_s$ is thus the natural identification of periodic orbits. Hence the transfer 
	\[
		\phi : SH\bigl(W\cup(\partial W\times[0,R]),H\bigr)\rightarrow SH\bigl(W\cup(\partial W\times[0,R-\epsilon]),e^{\epsilon} {\varphi_{\epsilon}^X}^{\star}H\bigr)
	\]
	is the natural identification for $\epsilon= \eta_0 R$.
	Now we use the flow of the Liouville vector field, $\varphi_{\epsilon}^X$,  to carry all this construction further and we get the natural identification as the transfer morphism
	{\small{\[
		\phi : SH\bigl(W\cup(\partial W\times[0,R-\epsilon]),e^{\epsilon} {\varphi^{\epsilon}_X}^{\star}H\bigr)\rightarrow SH\bigl(W\cup(\partial W\times[0,R-2\epsilon]),e^{2\epsilon} {\varphi^{2\epsilon}_X}^{\star}H\bigr).
	\]}}
	By induction and functoriality, we get the result.
\end{proof}
\begin{lemma}\label{lem:isotopyfixedonboundary}
	Let $W$ be a compact symplectic manifold with contact type boundary.
	Let $\lambda_t,\, t\in[0,1]$ be an isotopy of Liouville forms on $W$ such that in a neighbourhood $U$ of the boundary, $\lambda_t=\lambda_0$.
	Then $
		SH^\dagger(W,\lambda_0) \cong SH^\dagger(W,\lambda_1)$.
\end{lemma}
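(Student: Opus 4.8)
The plan is to use a Moser-type argument to convert the isotopy of Liouville forms into a family of symplectomorphisms that are the identity near the boundary, and then appeal to Lemma \ref{lem:invsymplecto}. Write $\omega_t := d\lambda_t$; since $\lambda_t = \lambda_0$ on the neighbourhood $U$ of $\partial W$, all the $\omega_t$ agree on $U$. The family $\mu_t := \tfrac{d}{dt}\lambda_t$ is a family of $1$-forms that vanishes on $U$, and $\tfrac{d}{dt}\omega_t = d\mu_t$. Because each $\omega_t$ is symplectic and $W$ is compact, there is a unique time-dependent vector field $Y_t$ defined by $\iota(Y_t)\omega_t = -\mu_t$; this $Y_t$ vanishes identically on $U$ (where $\mu_t=0$), so its flow $\psi_t$ is defined for all $t\in[0,1]$, fixes $\partial W$, and equals the identity on a smaller neighbourhood of $\partial W$. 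The Cartan formula computation
\[
	\frac{d}{dt}\bigl(\psi_t^\star\lambda_t\bigr) = \psi_t^\star\bigl(\Lie_{Y_t}\lambda_t + \mu_t\bigr) = \psi_t^\star\bigl(d\iota(Y_t)\lambda_t + \iota(Y_t)d\lambda_t + \mu_t\bigr) = \psi_t^\star\, d\bigl(\iota(Y_t)\lambda_t\bigr)
\]
shows that $\psi_t^\star\lambda_t = \lambda_0 + d g_t$ for some smooth family of functions $g_t$ with $g_0 = 0$; moreover, since $Y_t$ vanishes near $\partial W$, we may arrange $g_t$ to be locally constant (hence, after subtracting a constant, zero) near $\partial W$. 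Thus $\psi_1$ is a symplectomorphism of $(W,\omega_1)$ onto $(W,\omega_0)$ with $\psi_1^\star\lambda_0 = \lambda_1 + dg_1$ and $\psi_1 = \id$ near $\partial W$.

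Next I would feed this into the invariance statements already proved. The symplectomorphism $\psi_1 : (W,\omega_1,X_1) \to (W,\omega_0,X_0)$, where $X_t$ is the Liouville vector field of $\lambda_t$, is the identity on a neighbourhood of $\partial W$, so in particular $(\psi_1)_\star X_1 = X_0$ there; Lemma \ref{lem:invsymplecto} then gives $SH^\dagger(W,\omega_1,X_1) \cong SH^\dagger(W,\omega_0,X_0)$, i.e. $SH^\dagger(W,\lambda_1)\cong SH^\dagger(W,\lambda_0)$. For the symbols $\dagger = +$ and $\dagger = (S^1,+)$, where we need $W$ to be a Liouville domain and the identification of small-action orbits with critical points, one checks that the proof of Lemma \ref{lem:invsymplecto} respects the action filtration up to the relevant $\epsilon$: since $\psi_1$ is the identity near $\partial W$ and $\lambda_t$ is fixed there, the action spectrum of the Reeb flow on $\partial W$ is unchanged, the constant $\epsilon$ can be chosen uniformly, and the subcomplex of low-action orbits is preserved under the correspondence $H' \mapsto \psi_1^\star H'$.

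The main obstacle — and the point requiring care — is the book-keeping for the \emph{positive} variants: Lemma \ref{lem:invsymplecto} as stated only asserts an isomorphism of the full symplectic homologies, and one must verify that the cofinal family $\{\psi_1^\star H'\}$ can be chosen so that the splitting of the Floer complex into the low-action subcomplex (critical points) and the positive quotient is preserved, and that the continuation maps used in the direct limits intertwine correctly under $\psi_1$. Because $\psi_1$ is the identity in a neighbourhood of $\partial W$ and $\lambda_1 = \lambda_0$ there, this is essentially automatic — the non-constant $1$-periodic orbits of $\psi_1^\star H'$ all lie in the region where $\psi_1 = \id$ — but it should be spelled out. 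An alternative, slightly more robust route would be to observe that the primitive shift $dg_1$ is irrelevant since $g_1$ is (arranged to be) zero near $\partial W$, so $\psi_1^\star\lambda_0$ and $\lambda_1$ literally agree near the boundary and define the same completion data; then the two lemmas combine directly. Either way the result follows.
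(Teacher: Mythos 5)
Your argument is correct and is essentially the paper's own proof: the same Moser vector field defined by $\iota(Y_t)\,d\lambda_t=-\tfrac{d}{dt}\lambda_t$, which vanishes near $\partial W$, has a flow whose time-one map is the identity near the boundary and pulls back $\lambda_1$ to $\lambda_0+dg_1$, after which Lemma \ref{lem:invsymplecto} yields the isomorphism for every $\dagger$ (so your worry that that lemma covers only the full homology is unnecessary, as it is stated for all variants). Only note the small labelling slip: your Cartan computation gives $\psi_1^{\star}\lambda_1=\lambda_0+dg_1$, not $\psi_1^{\star}\lambda_0=\lambda_1+dg_1$.
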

\begin{proof}
	Remark that we do not require the $d\lambda_t$ to be equal.
	We define the time dependent vector field $X_t$ by $
		\iota(X_s)(d\lambda_s)=-\bigl(\tfrac{d}{dt}\lambda(t)_{\vert_s}\bigr)$
	and we denote by  $\varphi_t$ its flow.
	In the neighbourhood  $U$, the vector field vanishes, $X_s=0$,  and so $\varphi_1^{\star}\lambda_1=\lambda_1=\lambda_0$ on $U$ .
	Furthermore $\varphi_1^{\star}d\lambda_1=d\lambda_0$ because
	$
		\frac{d}{dt}\varphi_t^{\star}\lambda_t\Big|_s =\varphi_s^{\star}\Bigl(\frac{d\lambda_t}{dt}{\Big|_s}\Bigr) + \varphi_s^{\star}\Lie_{X_s}\lambda_s=d\Bigl(\varphi_s^{\star}\bigl(\lambda_s(X_s)\bigr)\Bigr).
	$
	This implies that the completions for $\lambda_0$ and $\varphi_1^{\star}\lambda_1$ are the same, therefore, by lemma \ref{lem:invsymplecto}, $
		SH^\dagger(W,\lambda_1)=SH^\dagger(W,\varphi^{\star}_1\lambda_1) = SH^\dagger(W,\lambda_0).$
		\end{proof}

\begin{theorem}\label{prop:invfunct}
	Let $W$ be a compact symplectic manifold with contact type boundary.
	Let $\lambda_t,\, t\in[0,1]$ be a homotopy of Liouville forms on $W$.
	Then
	\[
		SH^\dagger(W,\lambda_0) \cong SH^\dagger(W,\lambda_1).
	\]
\end{theorem}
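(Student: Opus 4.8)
\emph{Proof plan.} Each $(W,\lambda_t)$ is a Liouville domain, so all four flavours of $SH^\dagger$ are defined, and by Lemma~\ref{lem:isotopyfixedonboundary} the statement is already known when the homotopy is constant near $\partial W$. The plan is therefore to reduce the general case to that one, at the cost of enlarging $W$ to a domain $W\cup(\partial W\times[0,R])$, which changes nothing by Lemmas~\ref{lem:de^tlambda0} and \ref{lem:identitytransfer}. First, by compactness of $[0,1]$ it suffices to show that $t\mapsto SH^\dagger(W,\lambda_t)$ is locally constant and to compose the resulting isomorphisms over a finite subdivision; so I may assume $\lambda_t$ is $C^\infty$-small, in particular that the Liouville vector fields $X_t$ (given by $\iota(X_t)d\lambda_t=\lambda_t$) are all transverse to $\partial W$.

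Next I straighten the contact form on the boundary. The forms $\alpha_t:=\lambda_t|_{\partial W}$ are contact forms on the \emph{closed} manifold $\partial W$, so $\xi_t:=\ker\alpha_t$ is a path of contact structures; by Gray stability there is an isotopy $\psi_t$ of $\partial W$, $\psi_0=\id$, with $(\psi_t)_{\star}\xi_0=\xi_t$, hence $\psi_t^{\star}\alpha_t=e^{h_t}\alpha_0$ for smooth $h_t:\partial W\to\R$ with $h_0=0$. Using the flow of $X_0$ to identify $\partial W\times(-\delta,+\infty)\subset\widehat W$ with a neighbourhood of the end on which $\widehat\lambda_0=e^\rho\alpha_0$, I build a diffeomorphism $G_t$ of $\widehat W$ equal to the identity on $W$, equal to the symplectization reparametrisation $(p,\rho)\mapsto(\psi_t(p),\rho-h_t(p))$ on $\partial W\times[b,+\infty)$ for a fixed $b>0$, and interpolating on $\partial W\times[0,b]$. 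A direct computation gives $G_t^{\star}(e^{\rho'}\alpha_t)=e^\rho\alpha_0$ for $\rho\ge b$, so $\mu_t:=G_t^{\star}\widehat\lambda_t$ is a homotopy of Liouville forms on $\widehat W$ with $\mu_0=\widehat\lambda_0$, each equal to $e^\rho\alpha_0$ outside one fixed compact set.

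Now fix $R>b$ and put $W':=W\cup(\partial W\times[0,R])$, whose boundary lies where $\mu_t\equiv e^\rho\alpha_0$; then $\{\mu_t|_{W'}\}$ is a homotopy of Liouville forms on $W'$ constant near $\partial W'$, so Lemma~\ref{lem:isotopyfixedonboundary} gives $SH^\dagger(W',\mu_0|_{W'})\cong SH^\dagger(W',\mu_1|_{W'})$. Since $\mu_0|_{W'}=\widehat\lambda_0|_{W'}$, Lemma~\ref{lem:de^tlambda0} identifies the left side with $SH^\dagger(W,\lambda_0)$. On the right, $G_1$ maps $(W',\mu_1|_{W'})$ symplectomorphically onto a compact domain $W''$ of the completion of $(W,\lambda_1)$ which contains $W$, has contact-type boundary for $\widehat\lambda_1$, and along that boundary carries $\partial_\rho$ to the Liouville field of $\widehat\lambda_1$; so Lemma~\ref{lem:invsymplecto} gives $SH^\dagger(W',\mu_1|_{W'})\cong SH^\dagger(W'',\widehat\lambda_1|_{W''})$, and since $W\subset W''\subset W\cup(\partial W\times[0,R'])$ for $R'$ large, Lemma~\ref{lem:identitytransfer} together with the composition property of transfer morphisms (Theorems~\ref{thm:compos}, \ref{thm:fonctS1}) identifies this with $SH^\dagger(W,\lambda_1)$. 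Chaining the isomorphisms proves the local statement, and every ingredient — Gray stability and Lemmas~\ref{lem:invsymplecto}, \ref{lem:de^tlambda0}, \ref{lem:identitytransfer}, \ref{lem:isotopyfixedonboundary} — is available for each $\dagger\in\{\emptyset,+,S^1,(S^1,+)\}$.

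The main obstacle is the middle step: producing $G_t$ so that $\mu_t=G_t^{\star}\widehat\lambda_t$ is at the same time a genuine Liouville form on all of $\widehat W$, \emph{exactly} cylindrical outside a $t$-independent compact set, and compatible with $\widehat\lambda_1$ near $\partial W'$ in the way Lemma~\ref{lem:invsymplecto} requires. Concretely one has to cut off the Gray-stability vector field, transport it into the symplectization, and check that the resulting flow has all three properties; once this is done, the rest is bookkeeping with the enlargement lemmas.
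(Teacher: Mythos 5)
Your architecture is sound and genuinely different from the paper at the decisive point. The paper does not build the straightening diffeomorphism by hand: it invokes Cieliebak--Eliashberg (Proposition~\ref{prop:CE}), which hands over a diffeomorphism of completions $f$ with $f^{\star}\widehat{\lambda_1}-\widehat{\lambda_0}=dg$, $g$ compactly supported; the homotopy $\widehat{\lambda_0}+t\,dg$ is then constant near the boundary of a large cylindrical enlargement, and the same lemmas you quote (Lemmas~\ref{lem:de^tlambda0}, \ref{lem:invsymplecto}, \ref{lem:isotopyfixedonboundary}, \ref{lem:identitytransfer} and functoriality of the transfer) finish the proof. Your plan replaces that citation by Gray stability on $\partial W$ plus a cut-off lift to the symplectization, then runs the same bookkeeping; a nice feature is that you exploit the full strength of Lemma~\ref{lem:isotopyfixedonboundary} (the $d\mu_t$ are allowed to vary), so you never need the difference of forms to be exact. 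Your preliminary reduction to $C^\infty$-small homotopies is superfluous (each $\lambda_t$ is already a Liouville form, so $X_t$ is transverse to $\partial W$, and Gray stability requires no smallness), but it is harmless.

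The gap is exactly where you locate it, and it is the mathematical core of the statement, so it cannot be deferred as ``bookkeeping'': you never construct the family $G_t$ nor verify its properties. Whatever interpolation you choose on $\partial W\times[0,b]$, you must check that it is, for every $t$, an embedding into $\widehat{W_t}$, smooth in $t$, equal to the identity at $t=0$, matching the identity on $W$ and the map $(p,\rho)\mapsto\bigl(\psi_t(p),\rho-h_t(p)\bigr)$ for $\rho\geq b$, and with image disjoint from $G_t(W)=W$; this forces the interpolation interval to be long compared with $\sup_t\sup\abs{h_t}$, so that the $\rho$-component stays monotone and positive. These verifications are precisely the content the paper outsources to Proposition~\ref{prop:CE}. (Once $G_t$ is an embedding, the Liouville property of $\mu_t=G_t^{\star}\widehat{\lambda_t}$ is automatic, since $d\mu_t=G_t^{\star}d\widehat{\lambda_t}$ and $\mu_t$ is cylindrical near $\partial W'$; so the whole burden is on $G_t$ itself, and it must be written out.) A second, smaller gap: in your last step the one-sided chain $W\subset W''\subset W\cup(\partial W\times[0,R'])$ together with Lemma~\ref{lem:identitytransfer} only yields surjectivity of the transfer $SH^\dagger(W'',\widehat{\lambda_1})\to SH^\dagger(W,\lambda_1)$. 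To get injectivity you need the two-sided sandwich the paper itself uses: flow outward by the Liouville flow $\varphi^{X_1}_s$ of $\widehat{\lambda_1}$ to obtain $W\subset W''\subset\varphi^{X_1}_{s}(W)\subset\varphi^{X_1}_{s}(W'')$, note that the two overlapping composites are cylindrical-enlargement transfers, hence isomorphisms by Lemma~\ref{lem:identitytransfer}, and deduce that the middle map is an isomorphism. With these two points written out, your proof goes through for all four flavours of $SH^\dagger$.
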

To prove this Proposition, we use the following Proposition from Cieliebak and Eliashberg:
\begin{proposition}[\cite{CE}, Proposition 11.8]\label{prop:CE}
	Let $W$ be a compact symplectic manifold with contact type boundary.
	Let $\lambda_t,\, t\in[0,1]$ be a homotopy of Liouville forms on $W$.
	Then there exists a diffeomorphism of the completions $f:\widehat{W_0}\rightarrow\widehat{W_1}$ such that $f^{\star}\widehat{\lambda_1}-\widehat{\lambda_0}=dg$ where $g$ is a compactly supported function.
\end{proposition}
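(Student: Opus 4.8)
The plan is to prove Proposition~\ref{prop:CE} by a Moser-type deformation carried out directly on the completions, exploiting the infinite cylindrical ends to make the Moser vector field complete. Write $\alpha_t:={\lambda_t}_{\vert_{\partial W}}$ for the induced smooth family of contact forms on $\partial W$ and $X_t$ for the Liouville vector field of $\lambda_t$. As a first, purely technical step, one organizes the family of completions into a single object: since in Definition~\ref{def:setup} the collar of $\partial W$ inside $\widehat{W_t}$ is produced by the flow of $X_t$, and this flow depends smoothly on $t$, one may choose diffeomorphisms, smooth in $t$ and equal to the identity away from a collar of $\partial W$, identifying every $\widehat{W_t}$ with $P:=\widehat{W_0}$. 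Transporting $\widehat{\lambda_t}$ along them produces a smooth family of Liouville $1$-forms $\mu_t$ on $P$ with $\mu_0=\widehat{\lambda_0}$, with $(P,\mu_1)\cong(\widehat{W_1},\widehat{\lambda_1})$, and with $\mu_t=e^{\rho}\alpha_t$ outside a \emph{fixed} compact set $K\subset P$ (i.e.\ on the cylindrical end, with radial coordinate $\rho$).

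Next, since $d\mu_t$ is symplectic, define the time-dependent vector field $V_t$ on $P$ by $\iota_{V_t}d\mu_t=-\dot\mu_t$, where $\dot\mu_t:=\tfrac{d}{dt}\mu_t$. On the cylindrical end $\dot\mu_t=e^{\rho}\dot\alpha_t$, and a pointwise computation gives $V_t=-\dot\alpha_t(R_{\alpha_t})\,e^{\rho}\partial_{\rho}+Y_t$, where $Y_t$ is tangent to the $\partial W$-slices, $Y_t\in\ker\alpha_t$, and $Y_t$ is determined by $\iota_{Y_t}d\alpha_t|_{\ker\alpha_t}=-\dot\alpha_t|_{\ker\alpha_t}$. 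In particular $V_t$ grows at most linearly in $e^{\rho}$ with coefficients bounded uniformly in $t$, so along any orbit $\tfrac{d}{dt}\rho(\psi_t(p))$ stays bounded and there is no escape to infinity in finite time; hence the flow $\psi_t$ of $V_t$ with $\psi_0=\id$ is defined for all $t\in[0,1]$ and is a smooth isotopy of diffeomorphisms of $P$.

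Cartan's formula together with $\iota_{V_t}d\mu_t=-\dot\mu_t$ then gives
\[
\frac{d}{dt}\bigl(\psi_t^{\star}\mu_t\bigr)=\psi_t^{\star}\bigl(\mathcal{L}_{V_t}\mu_t+\dot\mu_t\bigr)=\psi_t^{\star}\bigl(d(\mu_t(V_t))\bigr)=d\bigl(\psi_t^{\star}(\mu_t(V_t))\bigr),
\]
so, integrating in $t$, $\psi_1^{\star}\mu_1-\mu_0=dg$ with $g:=\int_0^1\psi_t^{\star}(\mu_t(V_t))\,dt$. To see that $g$ has compact support, observe that on the cylindrical end $\mu_t(V_t)=e^{\rho}\alpha_t\bigl(-\dot\alpha_t(R_{\alpha_t})e^{\rho}\partial_{\rho}+Y_t\bigr)=0$, since $\alpha_t(\partial_{\rho})=0$ and $Y_t\in\ker\alpha_t$; thus $\mu_t(V_t)$ is supported in $K$, and the estimate $\rho(\psi_t(p))\ge\rho(p)-C$ along orbits shows that $\bigcup_{t\in[0,1]}\psi_t^{-1}(K)$ lies in a fixed compact subset of $P$. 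Composing $\psi_1$ with the identifications $\widehat{W_0}\cong P\cong\widehat{W_1}$ then produces the desired diffeomorphism $f:\widehat{W_0}\to\widehat{W_1}$ with $f^{\star}\widehat{\lambda_1}-\widehat{\lambda_0}=dg$, $g$ compactly supported (and automatically $f^{\star}d\widehat{\lambda_1}=d\widehat{\lambda_0}$).

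I expect the only genuine obstacle to be the normalization in the first paragraph: realizing the completions $(\widehat{W_t},\widehat{\lambda_t})$ of the varying Liouville domains as one smooth family $(P,\mu_t)$ that is honestly cylindrical outside a fixed compact set takes care, because both the radial coordinate $\rho$ and the position of the original $W$ inside $\widehat{W_t}$ move with $t$, so one must interpolate between the $t$-dependent collar identifications (keeping $d\mu_t$ symplectic throughout). Everything afterwards is the classical Moser scheme; the features special to the open case are merely that the cylindrical end supplies the room that makes $V_t$ complete, and that $V_t$ lies in $\ker\mu_t$ at infinity — which is precisely what forces $\mu_t(V_t)$, and hence $g$, to have compact support.
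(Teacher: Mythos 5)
The paper does not actually prove this statement --- it is imported verbatim from Cieliebak--Eliashberg (Proposition 11.8 of \cite{CE}) --- so there is no internal proof to compare against. Your Moser-type argument on the completions is essentially the standard proof of that cited result, and its architecture is sound: normalize the family of completions to a fixed manifold $P$ with $\mu_t=e^{\rho}\alpha_t$ outside a fixed compact set, run Moser with $\iota_{V_t}d\mu_t=-\dot\mu_t$, observe that $V_t$ lies in $\ker\mu_t$ at infinity so that the primitive $g=\int_0^1\psi_t^{\star}\bigl(\mu_t(V_t)\bigr)\,dt$ is compactly supported, and use the cylindrical end to get completeness of the flow. You are also right that the only step requiring real care is the first one (making the identifications $\widehat{W_t}\cong P$ smooth in $t$, cylindrical outside a fixed compact set, and honest pullbacks so that $d\mu_t$ stays symplectic); this is standard collar-uniqueness bookkeeping and is fine as sketched.

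There is, however, one computational slip that, as written, undercuts your completeness argument. Solving $\iota_{V}\bigl(e^{\rho}d\rho\wedge\alpha+e^{\rho}d\alpha\bigr)=-e^{\rho}\dot\alpha$ and pairing with $\partial_{\rho}$ and $R_{\alpha}$ gives $\alpha(V)=0$ and $d\rho(V)=-\dot\alpha(R_{\alpha})$; so the radial component of $V_t$ is $-\dot\alpha_t(R_{\alpha_t})\,\partial_{\rho}$, with \emph{no} factor of $e^{\rho}$, and in particular it is $\rho$-independent and uniformly bounded. With the formula you wrote, $V_t=-\dot\alpha_t(R_{\alpha_t})e^{\rho}\partial_{\rho}+Y_t$, the inference ``$V_t$ grows at most linearly in $e^{\rho}$, so $\tfrac{d}{dt}\rho(\psi_t(p))$ stays bounded'' is a non sequitur: $\dot\rho=c\,e^{\rho}$ with $c>0$ escapes to infinity in finite time, and the faster for points farther out, so your flow would not obviously be complete. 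The correct formula rescues everything: $\bigl|\rho(\psi_t(p))-\rho(p)\bigr|\le\max_{t,\partial W}\bigl|\dot\alpha_t(R_{\alpha_t})\bigr|$, which gives both completeness of $\psi_t$ and the containment of $\bigcup_t\psi_t^{-1}(K)$ in a compact set that you need for the support of $g$. The rest --- $\alpha_t(V_t)=0$ forcing $\mu_t(V_t)=0$ on the end, and the Cartan-formula integration --- is correct as stated.
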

\begin{proof}[ Proof of Theorem \ref{prop:invfunct}]
	There exists a  real $\rho_0>0$ such that \\
	$supp(g)\subset W\cup(\partial W\times[0,\rho_0])$.
	We choose positive real numbers $\rho_1,\rho_0'$ and $\rho_1'$ such that $f^{-1}(W\cup(\partial W\times[0,\rho_1])$ contains $W\cup(\partial W\times[0,\rho_0])$,\\
	$~\qquad f^{-1}(W\cup(\partial W\times[0,\rho_1]))\subset W\cup(\partial W\times[0,\rho_0'])$ and \\
	$~\qquad W\cup(\partial W\times[0,\rho_0'])\subset f^{-1}(W\cup(\partial W\times[0,\rho_1']))$.
	The situation is represented in Figure \ref{fig:thesituation}.
	\begin{figure}
		\def\svgwidth{2\textwidth}
		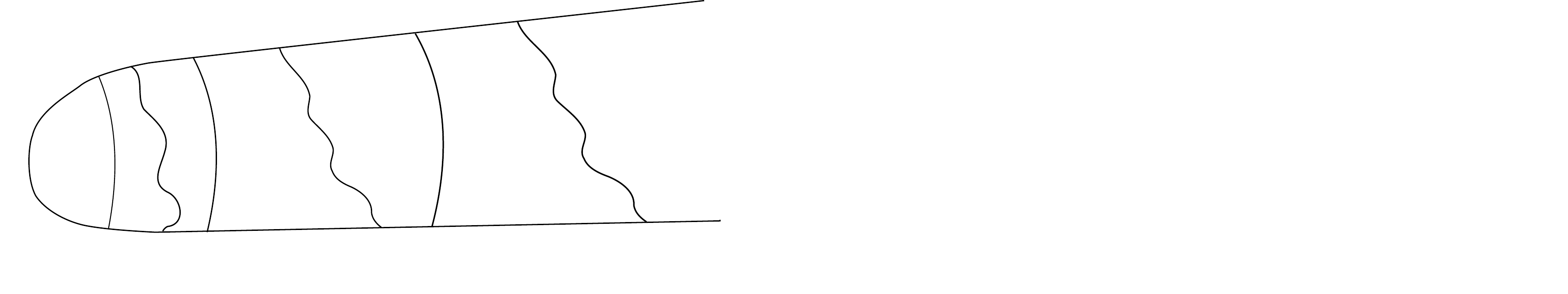
		\caption{The choice of $\rho_0$, $\rho_1$, $\rho_0'$ and $\rho'_1$}\label{fig:thesituation}
	\end{figure}\\	
	The diffeomorphism $f$ and the flow of $X_1$ on $\widehat{W}_1$ give
	\[
		\bigl(f^{-1}(W\cup(\partial W\times[0,\rho_1])), f^{\star}\widehat{\lambda_1}\bigr)\cong \bigl(W\cup(\partial W\times[0,\rho_1], \widehat{\lambda_1}\bigr) \cong (W,e^{\rho_1}\lambda_1).
	\]
	The completion of $\bigl(f^{-1}(W\cup(\partial W\times[0,\rho_1])), f^{\star}\widehat{\lambda_1}\bigr)$ coincides with $(\widehat{W}_0,\widehat{\lambda_0})$  since close to the boundary $f_{\star}X_0=X_1$.
We have 
	\begin{align*}
		SH(W,\lambda_1)&\cong SH\bigl(W\cup(\partial W\times[0,\rho_1]),\widehat{\lambda_1}\bigr)\quad\textrm{by Lemma \ref{lem:de^tlambda0}}\\
		&\cong SH\bigl(f^{-1}(W\cup(\partial W\times[0,\rho_1])), f^{\star}\widehat{\lambda_1}\bigr)\quad\textrm{by Lemma \ref{lem:invsymplecto}}\\
		&\cong SH\bigl(f^{-1}(W\cup(\partial W\times[0,\rho_1])), \widehat{\lambda_0} + dg\bigr)\quad\textrm{by Proposition \ref{prop:CE}}\\
		&\cong SH\bigl(f^{-1}(W\cup(\partial W\times[0,\rho_1]))=:W_1, \widehat{\lambda_0}\bigr)\quad\textrm{by Lemma \ref{lem:isotopyfixedonboundary}}.
	\end{align*}
	Denoting by $\varphi^{X_0}_t$ the flow of $X_0$ and by $W_0$ the manifold $W\cup(\partial W\times[0,\rho_0])$, we have
$\varphi^{X_0}_{\rho_1'-\rho_1}(W_1) = f^{-1}(W\cup(\partial W\times[0,\rho_1']))$
	and $\varphi^{X_0}_{\rho_0'-\rho_0}(W_0) = W\cup(\partial W\times[0,\rho_0'])$.
Using the functoriality of the transfer morphism, we get
	\[ \scriptsize{
		\xymatrix{
			SH(\varphi^{X_0}_{\rho_1'-\rho_1}(W_1),\widehat{\lambda_0}) \ar[r] \ar@/_1pc/[rr]_\cong & SH(\varphi^{X_0}_{\rho_0'-\rho_0}(W_0),\widehat{\lambda_0}) \ar@/_-1pc/[rr]^\cong \ar[r]
			& SH(W_1,\widehat{\lambda_0}) \ar[r]& SH(W_0,\widehat{\lambda_0});
		}}
	\]
	therefore $
		SH(W,\lambda_1) \,\cong \, SH(W_1,\widehat{\lambda_0})\,\cong\, SH(W_0, \widehat{\lambda_0})\,\cong\, SH(W,\lambda_0)$.
\end{proof}

Seidel in \cite{Seidel} has extended the definition of symplectic homology (and all its variants) to Liouville manifolds.
\begin{definition}[see for instance \cite{CE}]\label{def:liouvillemanifold}
	A \emph{Liouville manifold} is an exact symplectic manifold $(W,\w,X)$, where the vector field $X$ is an expanding Liouville vector field, i.e $\Lie_X\w=\w$ and $\varphi^X_t\w=e^t\w$ such that the vector field $X$ is complete and
 the manifold is convex in the sense that there exists an exhaustion $W=\cup_{k=1}^{\infty}W^k$ by compact domains $W_k\subset W$ with smooth boundaries along which $X$ is outward pointing.
\end{definition}
In the following we will denote a Liouville manifold either by $(W,\w,X)$ or by $(W,\lambda := \iota(X)\w)$.
The set $
	\operatorname{Skel}(V,\w,X) := \bigcup_{k=1}^{\infty}\bigcap_{t>0}\varphi^X_{-t}(W^k)$ 
is called the \emph{skeleton} of the Liouville manifold $(W,\w,X)$.
It is independent of the choice of the exhausting sequence of compact sets $W^k$.
A Liouville manifold $(W,\w,X)$ is said to be of \emph{finite type} if its skeleton is compact.
Every finite type Liouville manifold is the completion of a Liouville domain\footnote{We refer to the book by Cieliebak and Eliashberg for more details, \cite[Chapter 11]{CE}}.
\begin{definition}[\cite{Seidel}]
	Let $(W,\w,X)$ be a Liouville manifold non necessarily of finite type and let $W^k$ be an exhaustion by compact domains $W_k\subset W$ with smooth boundaries along which $X$ is outward pointing
	such that $W^k\subset W^{k+1}$.
	The symplectic homology (and its variants) of $(W,\lambda)$ is defined as the inverse limit of the symplectic homologies of $(W^k,\lambda_{|_{W^k}})$
	\[
		SH^{\dagger}(W,\lambda) := \lim_{\longleftarrow}SH^\dagger(W^k,\lambda_{|_{W^k}}).
	\]
	The morphisms appearing in this inverse limit are the transfer morphisms.
This definition is independent of the chosen exhaustion.
Remark that in the case of finite type Liouville manifolds, this definition coincides with the previous one.
\end{definition}
\begin{proposition}
	Let $(W_0,\lambda_0)$ and $(W_1,\lambda_1)$ be two Liouville manifolds not necessarily of finite type.
	Assume there exists an exact symplectomorphism $f:W_0\rightarrow W_1$ i.e. such that $f^{\star}\lambda_1-\lambda_0=dg$ with $g$ a function on $W_0$.
	Then $
		SH^\dagger(W_0,\lambda_0)\cong SH^\dagger(W_1,\lambda_1).$
\end{proposition}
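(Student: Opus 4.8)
The plan is to reduce everything to the exhaustion definition of $SH^\dagger$ for Liouville manifolds (following Seidel) and then to play two different exhaustions of $W_1$ off against each other, using that transfer morphisms are natural under symplectomorphisms and compose well (Theorems \ref{thm:compos}, \ref{thm:fonctS1}, and their non-equivariant analogue). First I would transport the Liouville structure of $W_0$ across $f$: set $\lambda_1':=\iota(f_\star X_0)\w_1$. Since $f^\star\w_1=\w_0$ one has $f^\star\lambda_1'=\iota(X_0)\w_0=\lambda_0$, while $f^\star\lambda_1-\lambda_0=dg$ forces $\lambda_1'=\lambda_1-d\tilde g$ with $\tilde g:=(f^{-1})^\star g$. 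One checks that $(W_1,\w_1,f_\star X_0)$ is again a Liouville manifold (the expanding property, completeness and convexity are all transported by $f$, using $\varphi^{f_\star X_0}_t=f\circ\varphi^{X_0}_t\circ f^{-1}$). Since $f$ restricts, for any exhaustion $W_0=\bigcup_k W_0^k$, to isomorphisms of Liouville domains $(W_0^k,\lambda_0|_{W_0^k})\xrightarrow{\ \sim\ }(f(W_0^k),\lambda_1'|_{f(W_0^k)})$ and conjugates the whole Floer package (Hamiltonians, almost complex structures, stair Hamiltonians, hence transfer morphisms) — the evident naturality statement, proved exactly as Lemma \ref{lem:invsymplecto} — it induces an isomorphism of inverse systems, so $SH^\dagger(W_0,\lambda_0)\cong SH^\dagger(W_1,\lambda_1')$. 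Thus the real task is to prove $SH^\dagger(W_1,\lambda_1')\cong SH^\dagger(W_1,\lambda_1)$, i.e. invariance under changing the Liouville primitive on a \emph{fixed} Liouville manifold by an exact form $d\tilde g$, where crucially $\tilde g$ is \emph{not} assumed to have compact support.

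For this second step I would fix an exhaustion $\{W_1^l\}$ of $W_1$ adapted to $X_1$ and the exhaustion $\{f(W_0^k)\}$ adapted to $X_1':=f_\star X_0=X_1-X_{\tilde g}$, and after passing to subsequences interleave them strictly, $W_1^{l_1}\subset\operatorname{int} f(W_0^{k_1})\subset\operatorname{int} W_1^{l_2}\subset\operatorname{int} f(W_0^{k_2})\subset\cdots$. The key observation is that every inclusion in this tower is a Liouville embedding in the generalised sense allowed by the Definition of Liouville embedding: near $\partial W_1^l$ the $X_1$-flow coordinates give $\lambda_1=\widehat{\lambda_1|_{W_1^l}}$, hence $\lambda_1'=\widehat{\lambda_1|_{W_1^l}}-d\tilde g$ on the larger domain; and near $\partial f(W_0^k)$ the $X_1'$-flow coordinates give $\lambda_1'=\widehat{\lambda_1'|_{f(W_0^k)}}$, hence $\lambda_1=\widehat{\lambda_1'|_{f(W_0^k)}}+d\tilde g$. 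So all the transfer morphisms in the interleaved tower are defined, and by the composition property the composite of two consecutive ones is the transfer morphism of the corresponding nested pair. Therefore the subtower $\{W_1^l\}$ is cofinal with its own transfer morphisms, and likewise $\{f(W_0^k)\}$, so the inverse limit over the interleaved tower equals simultaneously $\lim_{\longleftarrow}SH^\dagger(W_1^l,\lambda_1)=SH^\dagger(W_1,\lambda_1)$ and $\lim_{\longleftarrow}SH^\dagger(f(W_0^k),\lambda_1')=SH^\dagger(W_1,\lambda_1')$. Chaining this with the first step yields $SH^\dagger(W_0,\lambda_0)\cong SH^\dagger(W_1,\lambda_1)$, and the whole argument is uniform in $\dagger\in\{\emptyset,+,S^1,(S^1,+)\}$ — the exactness needed for the last three decorations is automatic for Liouville manifolds.

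I expect the main obstacle to be exactly this second step. The naive approach is to run the homotopy of Liouville forms $\lambda_1^t=\lambda_1-t\,d\tilde g$ and quote Theorem \ref{prop:invfunct} (as is done in the finite-type Theorem \ref{thm:invLiouville}, where one can also invoke Proposition \ref{prop:CE} with $g$ compactly supported). On a non-compact Liouville manifold this breaks down: there need be no single cofinal family of compact domains on whose boundaries all the Liouville vector fields $X_1^t=(1-t)X_1+tX_1'$ point outward simultaneously, the obstruction being that $X_{\tilde g}$ is uncontrolled at infinity. Handling this honestly is what forces the two-exhaustion interleaving, and the technical heart of the proof is the verification that the inclusions $W_1^l\hookrightarrow f(W_0^k)$ and $f(W_0^k)\hookrightarrow W_1^l$ are genuinely generalised Liouville embeddings, together with the bookkeeping showing that Theorems \ref{thm:compos} and \ref{thm:fonctS1} do identify the interleaved inverse limit with each of the two single-exhaustion inverse limits.
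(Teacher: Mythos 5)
Your overall architecture --- interleave an exhaustion adapted to each of the two Liouville structures and identify the inverse limit over the interleaved tower with the limit over each cofinal subsystem via the composition property of transfer morphisms --- is exactly the skeleton of the paper's proof, and your preliminary step of transporting $(W_0,\lambda_0,X_0)$ across $f$ (so that the problem becomes comparing the two primitives $\lambda_1$ and $\lambda_1'=\lambda_1-d\tilde g$ of the same symplectic form on $W_1$) is the mirror image of the paper's choice to work on $W_0$ with $\lambda_0$, $f^{\star}\lambda_1=\lambda_0+dg$ and exhaustions with $W_0^k\subset f^{-1}(W_1^k)\subset W_0^{k+1}$, using Lemma \ref{lem:invsymplecto}. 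You are also right --- and this is precisely why the paper does not simply quote Theorem \ref{prop:invfunct} --- that the naive homotopy $\lambda_1-t\,d\tilde g$ is unusable here because $\tilde g$ need not be compactly supported.

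Where you genuinely diverge is in how you make the transfer morphisms of the interleaved tower exist. You keep the two primitives untouched and invoke the generalised notion of Liouville embedding ($\lambda_W=\widehat{\lambda_V}+df$ near $j(\partial V)$) together with a correspondingly generalised composition theorem. Your verification that the interleaved inclusions are generalised Liouville embeddings is correct, but the paper only develops the transfer map, Proposition \ref{prop:homCII}, the maximum-principle argument and Theorems \ref{thm:compos}, \ref{thm:fonctS1} in the strict case $j^{\star}\lambda_W=\lambda_V$; the generalised case appears only in a parenthetical remark, so as written the technical heart of your proof rests on statements not proved in the paper. (They do hold --- the discrepancy being exact, actions of closed orbits are unchanged, and in the maximum principle one can replace the ambient primitive near $j(\partial V)$ by $\lambda_W-d(\beta f)$ for a cutoff $\beta$ --- but this must be checked.) The paper sidesteps all of this by a cutoff trick: choosing $\eta$ equal to $1$ near $\bigcup_k f^{-1}(\partial W_1^k)$ and $0$ near $\bigcup_k\partial W_0^k$ and setting $\lambda:=\lambda_0+d(\eta g)$, every domain of the interleaved tower becomes a Liouville domain for the restriction of the single form $\lambda$ and every inclusion a strict Liouville embedding, so the transfer and composition theorems apply verbatim, while $SH^{\dagger}(W_0^k,\lambda)\cong SH^{\dagger}(W_0^k,\lambda_0)$ and $SH^{\dagger}\bigl(f^{-1}(W_1^k),\lambda\bigr)\cong SH^{\dagger}(W_1^k,\lambda_1)$ follow from the boundary-fixed homotopy Lemma \ref{lem:isotopyfixedonboundary} and Lemma \ref{lem:invsymplecto}. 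If you insert this interpolation in your second step (replace $\lambda_1'$ by $\lambda_1-d(\eta\tilde g)$ with $\eta=1$ near $\bigcup_k\partial f(W_0^k)$ and $\eta=0$ near $\bigcup_l\partial W_1^l$), your argument closes using only results actually established in the paper.
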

\begin{proof}
	Let $W_0^k$ be an exhaustion for $W_0$ and $W_1^k$ be an exhaustion for $W_1$ such that for all $k$,
	\[
		W_0^k\subset f^{-1}(W_1^k) \subset W_0^{k+1}
	\]
	where the inclusion at each level means the inclusion in the interior of the next compact space.
	Let $\eta$ be a smooth function $\eta:W_0\rightarrow[0,1]$ such that $\eta=1$ in a neighbourhood of $\cup_{k=1}^{\infty}f^{-1}(\partial W_1^k)$ and
	$\eta=0$ in a neighbourhood of $\cup_{k=1}^{\infty}\partial W_0^k$.
	We define the $1$-form $\lambda:= \lambda_0 + d(\eta g)$ on $W_0$.
		We have
	\[
		SH(W_0^k,\lambda_0)\cong SH(W_0^k,\lambda)\quad\textrm{and}\quad SH(W_1^k,\lambda_1)\cong SH\bigl(f^{-1}(W_1^k),\lambda\bigr).
	\]
	The functoriality of the transfer morphism implies that the following diagram is commutative:
	
	\[
		\tiny{\xymatrix{
			\cdots \ar[r] \ar@/_-1pc/[rr] & SH\bigl(f^{-1}(W_1^{k+1}),\lambda\bigr) \ar[r] \ar@/_1pc/[rr] & SH(W_0^{k+1},\lambda) \ar@/_-1pc/[rr] \ar[r]
			& SH\bigl(f^{-1}(W_1^k),\lambda\bigr) \ar[r]  \ar@/_1pc/[rr]& SH(W_0^k,\lambda)\ar[r] & \cdots
		}}.
	\]
	
	Therefore, $ SH(W_0,\lambda_0) \cong {\displaystyle{\lim_{\longleftarrow}}}SH(W_0^k,\lambda_0) \cong {\displaystyle{\lim_{\longleftarrow}}}SH(W_0^k,\lambda)$\\
	$
		 \cong {\displaystyle{\lim_{\longleftarrow}}}SH\bigl(f^{-1}(W_1^k),\lambda\bigr)  \cong{\displaystyle{\lim_{\longleftarrow}}}SH(W_1^k, f_{\star}\lambda)
		 \cong {\displaystyle{\lim_{\longleftarrow}}}SH(W_1^k, \lambda_1)  \cong SH(W_1, \lambda_1).$
\end{proof}

\noindent The above result may be extended thanks to the following Lemma:
\begin{lemma}[\cite{BEE}, see also \cite{CE}, Lemma 11.2]
	Any symplectomorphism between finite type Liouville manifolds $f:(W_0,\lambda_0)\rightarrow(W_1,\lambda_1)$ is diffeotopic to an exact symplectomorphism.
\end{lemma}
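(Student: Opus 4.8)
The plan is to reduce the statement to a de Rham obstruction and then kill it by a Moser-type argument. Since $f$ is symplectic we have $d(f^\star\lambda_1-\lambda_0)=f^\star\omega_1-\omega_0=0$, so $\sigma:=f^\star\lambda_1-\lambda_0$ is a closed $1$-form on $W_0$, and $f$ is an exact symplectomorphism precisely when $[\sigma]=0$ in $H^1(W_0;\R)$. I would construct a symplectic isotopy $\{\psi_t\}_{t\in[0,1]}$ of $(W_0,\omega_0)$ with $\psi_0=\id$ such that $\psi_1^\star(\lambda_0+\sigma)-\lambda_0$ is exact; then $g:=f\circ\psi_1$ is a symplectomorphism $W_0\to W_1$ with $g^\star\lambda_1-\lambda_0$ exact, and $\{f\circ\psi_t\}_{t\in[0,1]}$ is the required diffeotopy from $f$ to $g$. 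Note that it is enough to achieve this after replacing $\sigma$ by any cohomologous $1$-form, since changing $\sigma$ by $dh$ changes $\psi_1^\star(\lambda_0+\sigma)-\lambda_0$ by the exact form $d(h\circ\psi_1)$.

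Here the finite-type hypothesis enters. I would write $W_0=\overline W_0\cup(\partial\overline W_0\times[0,\infty))$ as the completion of a Liouville domain, with $\lambda_0=e^\rho\alpha_0$ on the cylindrical end and $\pi:\partial\overline W_0\times[0,\infty)\to\partial\overline W_0$ the projection. On the end, $\sigma$ restricts to $\pi^\star\beta+dh$ for some closed $1$-form $\beta$ on the compact manifold $\partial\overline W_0$ and some function $h$ on the end; subtracting $d(\chi h)$ for a cutoff function $\chi$ supported near infinity and equal to $1$ on $\partial\overline W_0\times[1,\infty)$, I may therefore assume $\sigma=\pi^\star\beta$ on $\partial\overline W_0\times[1,\infty)$. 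Let $Y$ be the vector field on $W_0$ determined by $\iota_Y\omega_0=-\sigma$; since $\sigma$ is closed, $\Lie_Y\omega_0=d\,\iota_Y\omega_0=0$, so $Y$ is symplectic and its flow (when defined) consists of symplectomorphisms. On the end $\omega_0=d(e^\rho\alpha_0)$ scales like $e^\rho$ whereas $\pi^\star\beta$ is $\rho$-independent, so the equation $\iota_Y\omega_0=-\sigma$ forces $Y=e^{-\rho}Y_0$ there with $Y_0$ bounded and independent of $\rho$; hence $\tfrac{d}{dt}e^{\rho}$ is bounded along integral curves, $\rho$ cannot reach $+\infty$ in finite time, and $Y$ is complete. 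Let $\psi_t$ denote its flow.

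It then remains to run the Moser argument for the family of primitives $\lambda_0+t\sigma$ of $\omega_0$: using $\iota_Y\omega_0=-\sigma$,
\[
 \tfrac{d}{dt}\,\psi_t^\star(\lambda_0+t\sigma)=\psi_t^\star\bigl(\sigma+\iota_Y\omega_0+d\,\iota_Y(\lambda_0+t\sigma)\bigr)=d\bigl(\psi_t^\star\iota_Y(\lambda_0+t\sigma)\bigr),
\]
and integrating over $t\in[0,1]$ shows that $\psi_1^\star(\lambda_0+\sigma)-\lambda_0$ is exact, which finishes the proof. The step I expect to be the main obstacle is the completeness of $Y$: for a general non-compact exact symplectic manifold the Moser flow may fail to exist for all time, and it is precisely the conical geometry of the end of a finite-type Liouville manifold — the $e^\rho$-growth of $\omega_0$ played against a representative of $[\sigma]$ pulled back from the compact base — that produces the decay $Y=O(e^{-\rho})$ making the flow complete. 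Everything else is the standard Moser trick; alternatively, one may invoke \cite{BEE} or \cite[Lemma 11.2]{CE} directly.
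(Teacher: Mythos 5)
Your argument is correct. Note that the paper does not prove this lemma at all — it is quoted from \cite{BEE} and \cite[Lemma 11.2]{CE} — so there is no internal proof to compare with; what you have written is essentially the standard flux-killing argument behind the cited result: represent the class of $\sigma=f^\star\lambda_1-\lambda_0$ on the conical end by a form pulled back from the compact boundary, take the symplectic vector field $\omega_0$-dual to it, use the $e^{-\rho}$-decay against the $e^{\rho}$-growth of $\omega_0$ to get completeness, and conclude by the Moser/Cartan computation that composing $f$ with the time-one flow makes the pullback difference exact. The two points that genuinely need care — that replacing $\sigma$ by a cohomologous representative is harmless, and that the flow exists up to time $1$ because $\tfrac{d}{dt}e^{\rho}$ is bounded along trajectories — are both addressed correctly, and your isotopy $f\circ\psi_t$ is in fact a symplectic isotopy, which is stronger than the diffeotopy required by the statement.
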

We have thus proven the invariance Theorem \ref{thm:invLiouville} stated in the introduction.

\subsection{Invariance of the homology of contact fillings}
In this section we shall prove  Theorem \ref{prop:invtcontact}  giving an invariant of the contact structure.
\begin{lemma}[\cite{C}]\label{lem:cieliebak}
	Let $(\alpha_t)_{t\in[0,1]}$ be a smooth family of contact forms on a closed manifold $M$ of dimension $2n-1$.
	Then there exists $R>0$ and a non-decreasing function $f:[0,R]\rightarrow[0,1]$ such that $f\equiv0$ close to $\rho=0$ and $f\equiv1$ close to $\rho=R$
	and $d\bigl(e^{\rho}\alpha_{f(\rho)}\bigr)$ is symplectic on  $M\times[0,R]$.
\end{lemma}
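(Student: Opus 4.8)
The plan is to compute $d\bigl(e^{\rho}\alpha_{f(\rho)}\bigr)$ on $M\times[0,R]$ explicitly and read off exactly which $f$ work. Write $\rho$ for the coordinate on $[0,R]$, let $d_M$ denote the exterior derivative along $M$, and set $\dot\alpha_s:=\frac{\partial}{\partial s}\alpha_s$, a $1$-form on $M$. Since $\alpha_{f(\rho)}$ depends on $\rho$ only through $f$, the chain rule gives
\[
  \omega := d\bigl(e^{\rho}\alpha_{f(\rho)}\bigr)
  = e^{\rho}\Bigl(d\rho\wedge\bigl(\alpha_{f(\rho)} + f'(\rho)\,\dot\alpha_{f(\rho)}\bigr) + d_M\alpha_{f(\rho)}\Bigr).
\]
Because $A:=d\rho\wedge\bigl(\alpha_{f(\rho)}+f'(\rho)\dot\alpha_{f(\rho)}\bigr)$ satisfies $A\wedge A=0$, and because $(d_M\alpha_{f(\rho)})^n$ is a $2n$-form on the $(2n-1)$-dimensional $M$ and hence vanishes, only one term survives upon raising $\omega$ to the $n$-th power:
\[
  \omega^n = n\,e^{n\rho}\; d\rho\wedge\Bigl(\bigl(\alpha_{f(\rho)} + f'(\rho)\,\dot\alpha_{f(\rho)}\bigr)\wedge(d\alpha_{f(\rho)})^{n-1}\Bigr).
\]
Thus $\omega$ (which is closed, being exact) is symplectic on $M\times[0,R]$ if and only if the bracketed $(2n-1)$-form is nowhere zero on $M$, for every $\rho$.

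Next I would convert this into a scalar inequality using the contact condition. For each $s\in[0,1]$ the form $\alpha_s\wedge(d\alpha_s)^{n-1}$ is a volume form on $M$, so there is a function $g_s\in C^{\infty}(M)$, depending smoothly on $(s,x)\in[0,1]\times M$, with $\dot\alpha_s\wedge(d\alpha_s)^{n-1}=g_s\,\alpha_s\wedge(d\alpha_s)^{n-1}$. Substituting,
\[
  \omega^n = n\,e^{n\rho}\,\bigl(1+f'(\rho)\,g_{f(\rho)}\bigr)\; d\rho\wedge\alpha_{f(\rho)}\wedge(d\alpha_{f(\rho)})^{n-1},
\]
and the wedge product on the right is an honest volume form on $M\times[0,R]$. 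Hence nondegeneracy of $\omega$ amounts to $1+f'(\rho)\,g_{f(\rho)}(x)\neq 0$ for all $(x,\rho)$; since $f'\ge 0$ and the expression equals $1$ wherever $f'=0$ — in particular near $\rho=0$ — this is equivalent to $1+f'(\rho)\,g_{f(\rho)}(x)>0$ everywhere.

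Finally I would choose $R$ and $f$. By compactness of $M\times[0,1]$ there is $C>0$ with $|g_s(x)|\le C$ for all $(s,x)$. Fix a smooth non-decreasing $h:\R\to[0,1]$ with $h\equiv 0$ on $(-\infty,0]$, $h\equiv 1$ on $[1,\infty)$, put $C_0:=\sup|h'|$, take $R:=3+C_0 C$, and set $f(\rho):=h\bigl((\rho-1)/(R-2)\bigr)$. Then $f$ is non-decreasing, $f\equiv 0$ near $\rho=0$, $f\equiv 1$ near $\rho=R$, and $0\le f'\le C_0/(R-2)<1/C$; therefore $1+f'(\rho)\,g_{f(\rho)}(x)\ge 1-f'(\rho)\,C>0$ everywhere, so $\omega^n$ is a volume form and $\omega$ is symplectic. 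The only delicate points are the bookkeeping of the $\rho$-derivative in the first display (the $f'\dot\alpha$ term) and the reduction from a form-valued condition to a scalar one; once those are in place the conclusion is just a matter of taking $R$ large enough that a sufficiently slow ramp $f$ fits inside $[0,R]$.
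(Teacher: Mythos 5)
Your proof is correct and follows essentially the same route as the paper: the identical computation of $\bigl(d(e^{\rho}\alpha_{f(\rho)})\bigr)^n$, followed by a pointwise nondegeneracy criterion and the observation that a sufficiently slowly varying $f$ satisfies it (your function $g_s$ is just $\dot\alpha_s(R_{\alpha_s})$, so your inequality $1+f'g_{f}>0$ coincides with the paper's condition $\bigl(\alpha_{f(\rho)}+f'(\rho)\dot\alpha_{f(\rho)}\bigr)\bigl(R_{\alpha_{f(\rho)}}\bigr)>0$). The only difference is that you spell out the compactness bound and the explicit choice of $R$ and $f$, which the paper compresses into ``this is true if $f'$ is small.''
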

\begin{proof}
	The proof is a computation: \\$
		d\bigl(e^{\rho}\alpha_{f(\rho)}\bigr) = e^{\rho}d\rho\wedge\alpha_{f(\rho)} + e^{\rho}d\alpha_{f(\rho)} + e^{\rho}f'(\rho)d\rho\wedge\dot{\alpha}_{f(\rho)}
$ and \\ $\Bigl(d\bigl(e^{\rho}\alpha_{f(\rho)}\bigr)\Bigr)^n = ne^{n\rho}\Bigl(d\rho\wedge\bigl(\alpha_{f(\rho)}+f'(\rho)\dot{\alpha}_{f(\rho)}\bigr)\wedge\bigl(d\alpha_{f(\rho)}\bigr)^{n-1}\Bigr);
$ thus \\$d\bigl(e^{\rho}\alpha_{f(\rho)}\bigr)$ is symplectic if and only if $\bigl(\alpha_{f(\rho)}+f'(\rho)\dot{\alpha}_{f(\rho)}\bigr)\bigl(R_{\alpha_{f(\rho)}}\bigr) >0$.
	This is true if $f'$ is small.
\end{proof}
\begin{lemma}
	If $(M,\xi)$ is a compact contact manifold which is exactly fillable by a Liouville domain $(W,\lambda_0)$ (i.e. $\partial W=M$ and $\xi=\ker\alpha_0$ where $\alpha_0=\lambda_0\big|_M$)
	then, for any contact form $\alpha_1$ such that $\xi=\ker\alpha_1$ (and $\alpha_1$ defines the same orientation on $M$), there exists a homotopy of Liouville form $\lambda_s, s\in[0,1]$ on $W$ such that $\lambda_1\big|_M=\alpha_1$.
\end{lemma}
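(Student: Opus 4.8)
\textit{Proof proposal.} The plan is to realise $\alpha_1$ as the boundary restriction of a Liouville form produced from $\lambda_0$ by a ``graph'' modification on a collar, carried out far enough out in the completion that there is room to stay symplectic all along the way. Since $\ker\alpha_0=\ker\alpha_1=\xi$ and the two forms induce the same coorientation, we may write $\alpha_1=e^{g}\alpha_0$ for a unique $g\in C^\infty(M)$; set $c:=\norm{g}_{C^0}$. The naive attempt — interpolating between $e^\rho\alpha_0$ and $e^{g}\alpha_0$ on the Liouville collar $M\times(-\delta,0]\subset W$ given by the flow of $X$ — is obstructed, since the computation below forces the collar to have width $\gtrsim c$, while the collar inside $W$ may be thin. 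I would get around this by passing to the completion $\widehat W=W\cup\bigl(M\times[0,+\infty)\bigr)$, on which $\widehat{\lambda_0}=e^\rho\,\pi^{*}\alpha_0$ on $M\times[0,+\infty)$ ($\pi$ the projection to $M$). Fix $T$ large (say $T>2c+1$) and put $W_T:=W\cup\bigl(M\times[0,T]\bigr)\subset\widehat W$, so that the outer collar $M\times[0,T]$ of $W_T$ has width $T$. The time-$T$ flow $\Phi:=\varphi^X_T$ of the Liouville vector field restricts to a diffeomorphism $\Phi:W\to W_T$ with $\Phi^{*}\widehat{\lambda_0}=e^{T}\widehat{\lambda_0}$ (hence $=e^{T}\lambda_0$ on $W$), and $\Phi$ carries $\partial W$ onto $M\times\{T\}$ by the identity of $M$.

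Next I would build the family on $W_T$. Choose a non-decreasing cutoff $\eta:[0,T]\to[0,1]$ with $\eta\equiv0$ near $0$, $\eta\equiv1$ near $T$, and $\max|\eta'|<1/c$ (possible because $T$ is large), and for $s\in[0,1]$ set
\[
\mu_s:=\begin{cases}\widehat{\lambda_0}&\text{on }W,\\[1mm] e^{\,\rho+s\,\eta(\rho)\,(g\circ\pi)}\,\pi^{*}\alpha_0&\text{on }M\times[0,T].\end{cases}
\]
This is smooth, since $\mu_s=e^\rho\pi^{*}\alpha_0=\widehat{\lambda_0}$ near $\rho=0$. A direct computation, exactly of the type used in Lemma \ref{lem:cieliebak}, gives on $M\times[0,T]$
\[
(d\mu_s)^n=n\,e^{n(\rho+s\eta g)}\,\bigl(1+s\,\eta'(\rho)\,g\bigr)\,d\rho\wedge\pi^{*}\!\bigl(\alpha_0\wedge(d\alpha_0)^{n-1}\bigr),
\]
which is a (positively oriented) volume form because $1+s\,\eta'g\ge 1-\max|\eta'|\cdot c>0$ and $\alpha_0$ is contact; on $W$, $d\mu_s=d\widehat{\lambda_0}$ is symplectic. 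Moreover $\mu_s|_{\partial W_T}=e^{T+sg}\,\pi^{*}\alpha_0$ is a contact form inducing the coorientation of $\xi$, and since a primitive of a symplectic form whose restriction to the boundary is a contact form compatible with the boundary coorientation automatically has outward-transverse dual Liouville vector field, each $\mu_s$ is a Liouville form on $W_T$, with $\mu_0=\widehat{\lambda_0}|_{W_T}$.

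Finally I would transport back: set $\lambda_s:=e^{-T}\,\Phi^{*}\mu_s$ on $W$. This is a smooth family of Liouville forms on $W$ (a rescaled pullback of a Liouville form by a diffeomorphism), with $\lambda_0=e^{-T}\Phi^{*}\widehat{\lambda_0}=e^{-T}(e^{T}\lambda_0)=\lambda_0$ and
\[
\lambda_1|_{\partial W}=e^{-T}\,\Phi^{*}\bigl(\mu_1|_{\partial W_T}\bigr)=e^{-T}\,e^{T+g}\alpha_0=e^{g}\alpha_0=\alpha_1,
\]
which is the required homotopy.

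The one genuine difficulty is the collar-width point flagged above: a fixed neighbourhood of $\partial W$ inside $W$ need not be wide enough to interpolate \emph{through symplectic forms} from $e^\rho\alpha_0$ to $e^{g}\alpha_0$, which is precisely why the construction is carried out in the pushed-out copy $W_T\subset\widehat W$ and then pulled back by the Liouville flow. Granting this, the remaining ingredients are routine: the computation of $(d\mu_s)^n$ is borrowed verbatim from Lemma \ref{lem:cieliebak}, the equivalence between contact-type boundary and outward-transversality of the dual Liouville field is standard, and one only has to double-check the smoothness of $\mu_s$ at the gluing locus $\rho=0$ together with the identity $\Phi^{*}\widehat{\lambda_0}=e^{T}\widehat{\lambda_0}$ (which follows from $\Lie_X\widehat{\lambda_0}=\widehat{\lambda_0}$).
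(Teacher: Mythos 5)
Your proof is correct and follows essentially the same route as the paper: interpolate between $e^{\rho}\alpha_0$ and $e^{\rho}e^{g}\alpha_0$ over a sufficiently long collar attached in the completion (the symplecticity computation is exactly the special case $\alpha_t=e^{tg}\alpha_0$ of Lemma \ref{lem:cieliebak}), then pull back by the Liouville flow and rescale. The only cosmetic difference is that the paper keeps one fixed interpolating form $e^{\rho}\alpha_{f(\rho)}$ and lets the homotopy parameter be the length $r$ of the attached collar, whereas you fix the collar length $T$ and vary the deformation parameter $s$, pulling back by the single map $\varphi^X_T$; both yield the same family of boundary contact forms $e^{sg}\alpha_0$.
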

\begin{proof}
	Since $\alpha_1=e^g\alpha_0$, for a smooth function $g$ on $M$, we consider the smooth family of contact forms $\alpha_t=e^{tg}\alpha_0, t\in[0,1]$.\\
	We define on $W\cup M\times[0,R]\subset\widehat{W}$ the $1$-form $\tilde{\lambda}$ such that $
		\tilde{\lambda}= \lambda_0$  on $M$ and  $ \tilde{\lambda}=e^{\rho}\alpha_{f(\rho)}$  on $M\times[0,R]$
	with $f$ as in Lemma \ref{lem:cieliebak}, so that $d\tilde{\lambda}$ is symplectic.
	The flow $\varphi^{X_0}_{-r}$ of the vector field $X_0$, where $\iota(X_0)d\lambda_0=\lambda_0$,   induces a diffeomorphism from $W\cup M\times[0,r]$ to $W$.
	The pull-back by this flow of $e^{-r}\tilde{\lambda}$ gives the desired $\lambda_{f(r)}$.
\end{proof}

\noindent
Combining with Theorem \ref{prop:invfunct} and Theorem \ref{thm:computing}, this yields 
\begin{lemma}\label{lem:invctc}
	Let $(M_0,\xi_0)$ be a contact manifold that is exactly fillable by the Liouville domains $(W_0,\lambda_0)$.
	Assume that there exists a (oriented) contact form $\tilde{\alpha}_0$ on $M_0$ such that all periodic Reeb orbits are nondegenerate and their Conley-Zehnder index have all the same parity.
	Then
	\[
		SH^{S^1,+}(W_0,\lambda_0) = \bigoplus_{\gamma\in\Per(R_{\tilde{\alpha}_0})} \Q\langle\gamma\rangle
	\]
	where $\Per(R_{\tilde{\alpha}_0})$ denotes the set of periodic Reeb orbits on $(M_0,\tilde{\alpha}_0)$.{{\hfill$\square$\\ }}
\end{lemma}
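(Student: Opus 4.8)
The plan is to reduce the statement to Theorem \ref{thm:computing} by first deforming the Liouville structure on $W_0$ so that its boundary carries the prescribed contact form $\tilde{\alpha}_0$, and then to observe that the common-parity hypothesis is exactly the combinatorial input that makes Theorem \ref{thm:computing} applicable and its conclusion take the stated form. Concretely, I would begin by invoking the Lemma immediately preceding this one: since $(M_0,\xi_0)$ is exactly filled by the Liouville domain $(W_0,\lambda_0)$ and $\tilde{\alpha}_0$ is an oriented contact form for $\xi_0$, there is a homotopy of Liouville forms $\lambda_s$, $s\in[0,1]$, on the compact manifold $W_0$ starting at $\lambda_0$ and with $\lambda_1|_{M_0}=\tilde{\alpha}_0$. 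In particular $(W_0,\lambda_1)$ is again a Liouville domain, now with boundary contact form $\tilde{\alpha}_0$. Applying the invariance Theorem \ref{prop:invfunct} with $\dagger=(S^1,+)$ gives
\[
	SH^{S^1,+}(W_0,\lambda_0)\cong SH^{S^1,+}(W_0,\lambda_1),
\]
so it suffices to compute the right-hand side.

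Next I would verify the hypotheses of Theorem \ref{thm:computing} for $(W_0,\lambda_1)$ with the contact form $\tilde{\alpha}_0$. By assumption every periodic Reeb orbit of $\tilde{\alpha}_0$ is nondegenerate and all their Conley--Zehnder indices share a common parity; hence no two of these indices are consecutive integers, i.e. the set of Conley--Zehnder indices of all periodic Reeb orbits (in particular of all \emph{good} ones) is lacunary. Moreover, a periodic Reeb orbit $\gamma$ which is a $k$-fold cover of a simple orbit $\gamma_0$ is bad precisely when $\mu_{CZ}(\gamma)$ and $\mu_{CZ}(\gamma_0)$ have opposite parities; since by hypothesis all indices have the same parity, this never occurs, so every periodic Reeb orbit of $\tilde{\alpha}_0$ is good. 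Consequently the set $\Per(R_{\tilde{\alpha}_0})$ of good periodic Reeb orbits appearing in Theorem \ref{thm:computing} is the full set of periodic Reeb orbits of $\tilde{\alpha}_0$. Theorem \ref{thm:computing} then yields
\[
	SH^{S^1,+}(W_0,\lambda_1;\Q)=\bigoplus_{\gamma\in\Per(R_{\tilde{\alpha}_0})}\Q\langle\gamma\rangle,
\]
and combining this with the isomorphism of the previous paragraph proves the Lemma.

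I do not expect a genuine obstacle here: no new analytic ingredient is required, and the argument is a composition of results already established in this section. The only points needing care are bookkeeping: checking that the family produced by the preceding Lemma is a bona fide homotopy of Liouville forms on the \emph{compact} manifold $W_0$, so that Theorem \ref{prop:invfunct} applies without modification, and confirming that ``all Conley--Zehnder indices have the same parity'' delivers simultaneously the lacunarity assumption and the vanishing of bad orbits needed to identify $\Per(R_{\tilde{\alpha}_0})$ with the full orbit set when feeding into the spectral-sequence computation of Theorem \ref{thm:computing}.
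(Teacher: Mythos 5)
Your proof is correct and is essentially the paper's argument: the paper obtains the Lemma by combining the preceding lemma (producing a Liouville homotopy with $\lambda_1|_{M_0}=\tilde{\alpha}_0$) with the invariance Theorem \ref{prop:invfunct} and the computation Theorem \ref{thm:computing}. Your additional remarks that common parity implies lacunarity and excludes bad orbits (so that $\Per(R_{\tilde{\alpha}_0})$ coincides with the set of good orbits) are exactly the bookkeeping the paper leaves implicit.
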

\begin{proof}[ Proof of Theorem \ref{prop:invtcontact}]
	Given the contactomorphism $\varphi:(M_0,\xi_0)\rightarrow (M_1,\xi_1)$ and the contact form $\tilde{\alpha}_0$, we define the form $\tilde{\alpha}_1:= (\varphi^{-1})^{\star}\tilde{\alpha}_0$;
	it is a contact form on $M_1$ and its periodic orbits are non degenerate, in bijection with those of $\tilde{\alpha}_0$.
	{The isomorphism preserves the Conley-Zehnder index if the orbit is null-homologous in the boundary but, in general, the isomorphism sends the framing (trivialisation) chosen for the orbit of $R_{\tilde{\alpha}_0}$ on a framing for the orbit of $R_{\tilde{\alpha}_1}$.
	There is a $\Z$-action on the framings (corresponding to the number of twists).
	For any integer number, the Conley-Zehnder index will change by an even number (see \cite{MacDuffSalamon, CZ, JGutt}).}
	We apply twice Lemma \ref{lem:invctc}; once for $(W_0,\lambda_0,\tilde{\alpha}_0)$ and for $(W_1,\lambda_1,\tilde{\alpha}_1)$.
	{We have therefore an isomorphism between the two graded modules with eventually shifts of degree of the generators given by the choice of framings.}
\end{proof}

\noindent
\subsection{Application : Non isomorphic contact structures on $S^{4m+1.}$}\label{section:Usti}
A first application of our results is to give a proof   of Ustilovsky's Theorem.
\begin{definition}[The Brieskorn spheres]\label{sect:Brieskorn}
	The Brieskorn manifold, denoted $\Sigma(a_0,\ldots,a_n)$, with all $a_i\geq2$ positive integers, is defined as the intersection of the unit sphere
	$S^{2n+1}\subset\C^{n+1}$ with the singular hypersurface
	$\{(z_0,\ldots,z_n)\in\C^{n+1}\,|\,z_0^{a_0}+\cdots+z_n^{a_n}=0\}$ in $\C^{n+1}$. \\ It is a smooth $2n-1$-dimensional manifold which
	 admits a contact form\\
	$
		\alpha=\frac{i}{8}\sum_{j=0}^n a_j(z_jd\overline{z_j} - \overline{z_j}dz_j)
	$
	with corresponding Reeb vector field\\
	$
		R_{\alpha} = \left(\frac{4i}{a_0}z_0,\ldots,\frac{4i}{a_n}z_n\right).
	$
For any odd number $n=2m+1$ and any $p\equiv\pm1\mod8$, the Brieskorn manifold $\Sigma(p,2,\ldots,2)$ is diffeomorphic to the standard sphere $S^{4m+1}$ \cite{Brieskorn}.
	One defines the contact structures $\xi_p$ on $S^{4m+1}$ defined as the kernel of the contact form $\alpha_p$ with
	\[
		\alpha_p := \frac{ip}{8} (z_0d\overline{z_0} - \overline{z_0}dz_0) + \frac{i}{4}\sum_{j=1}^n (z_jd\overline{z_j} - \overline{z_j}dz_j).
	\]
The fact that the Brieskorn spheres are exactly fillable can be found, for instance, in the book of Geiges \cite{Geiges}.
\end{definition}
\begin{proposition}\label{Brieskornfillings} For $p_1\neq p_2$, the positive $S^1$ equivariant homologies of 
symplectic fillings of the Brieskorn spheres are different.
\end{proposition}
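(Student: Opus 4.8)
The plan is to compute $SH^{S^1,+}$ of a filling of each Brieskorn sphere directly from its contact form, and then to read off a graded invariant that detects $p$. First I would recall (Definition \ref{sect:Brieskorn} and \cite{Geiges}) that for $p\equiv\pm1\bmod 8$ the sphere $(S^{4m+1},\xi_p)=\Sigma(p,2,\dots,2)$ is exactly fillable; fix such a Liouville filling $(W_p,\lambda_p)$. To feed this into Lemma \ref{lem:invctc} I must produce on $\xi_p$ a nondegenerate contact form whose good periodic Reeb orbits all have Conley--Zehnder indices of one fixed parity. Since the Brieskorn form $\alpha_p$ has periodic Reeb flow it has only Morse--Bott families of closed orbits; I would use the standard Morse--Bott perturbation of \cite{bo,BOduke} (adding a Morse function on each orbit space) to obtain a nondegenerate form $\widetilde\alpha_p$, and then invoke Ustilovsky's index computation \cite{U}, whose output is precisely that every good closed $\widetilde\alpha_p$-orbit $\gamma$ has $\mu_{CZ}(\gamma)$ of a parity independent of $\gamma$. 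In particular the set of Conley--Zehnder indices of good orbits is lacunary, so Lemma \ref{lem:invctc} applies and gives
\[
	SH^{S^1,+}_{*}(W_p,\lambda_p;\Q)\;=\;\bigoplus_{\gamma\in\Per(R_{\widetilde\alpha_p})}\Q\langle\gamma\rangle ,
\]
with $\gamma$ placed in degree $-\mu_{CZ}(\gamma)$ up to one overall shift of the grading by an even integer (coming from the choice of trivialisations, exactly as in the proof of Theorem \ref{prop:invtcontact}). Thus $SH^{S^1,+}_{*}(W_p,\lambda_p;\Q)$ is supported in degrees of a single parity and, after the even shift, has in each degree $d$ of that parity dimension equal to the number of good Reeb orbits $\gamma$ of $\widetilde\alpha_p$ with $-\mu_{CZ}(\gamma)=d$.

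Next I would extract the $p$-dependence from Ustilovsky's explicit count. His computation \cite{U} expresses $\dim_{\Q}SH^{S^1,+}_{d}(W_p,\lambda_p;\Q)$ as an explicit function of $d$ --- essentially the number of iterates $\gamma^N$ of the finitely many simple $\widetilde\alpha_p$-orbits with $-\mu_{CZ}(\gamma^N)=d$ --- and the resulting sequence $d\mapsto\dim_{\Q}SH^{S^1,+}_{d}(W_p,\lambda_p;\Q)$ is, for $d$ large, an explicit eventually-periodic sequence whose period and whose values within a period are governed by $p$; these sequences are pairwise distinct, and remain distinct after any shift of the grading by an even integer, for distinct admissible $p$. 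Comparing the two computations then yields, for $p_1\neq p_2$, that there is no even integer $c$ with $\dim SH^{S^1,+}_{d}(W_{p_1},\lambda_{p_1};\Q)=\dim SH^{S^1,+}_{d-c}(W_{p_2},\lambda_{p_2};\Q)$ for all large $d$; equivalently $SH^{S^1,+}(W_{p_1},\lambda_{p_1})$ and $SH^{S^1,+}(W_{p_2},\lambda_{p_2})$ are not isomorphic as graded $\Q$-modules even up to a degree shift by an even number, which is the assertion. Since moreover, under the lacunarity hypothesis, $SH^{S^1,+}$ of an exact filling depends only on the contact boundary (Theorem \ref{thm:computing}, applied to any filling), the same conclusion holds for \emph{any} exact fillings of the two spheres, not just the ones fixed above.

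The genuinely hard step --- indeed the only one that is not a direct application of Sections \ref{chap:SHS1orbits} and \ref{chapter:properties} --- is Ustilovsky's index computation. It does double duty: it certifies the parity (hence lacunarity) hypothesis without which Lemma \ref{lem:invctc} cannot be invoked, and it supplies the $p$-dependent graded invariant. I would simply cite \cite{U} for it, noting that the Morse--Bott perturbation scheme of \cite{bo,BOduke} together with the identification of $SH^{S^1,+}$ in Theorem \ref{thm:computing} make Ustilovsky's orbit-and-index bookkeeping (which is itself a purely linear-algebraic computation, free of transversality issues) directly applicable to $SH^{S^1,+}$, so that no input beyond what already underlies Theorem \ref{thm:computing} is required.
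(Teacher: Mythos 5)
Your proposal is correct and follows essentially the same route as the paper: verify, via Ustilovsky's explicit perturbation and index computation, that all Conley--Zehnder indices of the good Reeb orbits are even, apply Theorem \ref{thm:computing} (Lemma \ref{lem:invctc}) to identify $SH^{S^1,+}$ of a filling with the $\Q$-span of those orbits graded by minus their indices, and cite \cite{U} for the fact that the resulting graded modules differ when $p_1\neq p_2$. The only differences are cosmetic: the paper writes out the perturbed contact form $f\alpha$, the list of orbits and the explicit index formulas instead of citing them, and note that the nondegenerate form is obtained by perturbing the contact form itself as in \cite{U}, not by the Morse--Bott Hamiltonian perturbation of \cite{bo,BOduke}.
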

 \begin{proof}
We consider the  description of the chain complex for those  homologies in terms
of good periodic orbits of the Reeb  vector field, graded by minus their Conley indices.
We shall show that all Conley-Zehnder indices are even.
To compute them, the first thing to do is to build an explicit perturbation of the contact form so that all periodic Reeb orbits are non degenerate. We proceed as in \cite{U}.
	We make the change of coordinates
	 \[
	 	w_0=z_0,\,w_1=z_1\,
		\left(\begin{array}{c}
			w_{2j}\\
			w_{2j+1}
		\end{array}\right)
		=\frac{1}{\sqrt{2}}
		\left(\begin{array}{cc}
			1&i\\
			1&-i
		\end{array}\right)
		\left(\begin{array}{c}
			z_{2j}\\
			z_{2j+1}
		\end{array}\right),
		\textrm{ for} j\geq1;
	\]
	then {{$
		\Sigma(p,2,\ldots2) = \left\{ w\in\C^{n+1}\,\bigg|\,w_0^p+w_1^2+2\sum_{j=1}^mw_{2j}w_{2j+1}=0,\,\abs{w}^2=1\right\}.$}}
	Consider the real positive function $f:\Sigma(p,2,\ldots2)\rightarrow \R$ given by
	$$
		f(w)=\abs{w}^2+\sum_{j=1}^m\epsilon_j\bigl(\abs{w_{2j}}^2-\abs{w_{2j+1}}^2\bigr),\quad\textrm{where } 0<\epsilon_j<1.
	$$
	The contact form $f\alpha$ defines the same contact structure on $\Sigma(p,2,\ldots2)$ as $\alpha$ and its associated Reeb vector field is given by
	{\small{\[
		R_{f\alpha}(w) = \Bigl(\tfrac{4i}{p}w_0,2iw_1,2i(1+\epsilon_1)w_2,2i(1-\epsilon_1)w_3,\ldots,2i(1+\epsilon_m)w_{n-1},2i(1-\epsilon_m)w_n\Bigr).
	\]}}
	
	If all the $\epsilon_j$ are irrational and linearly independent over $\Q$, the only periodic orbits are\\
$\gamma_0(t) = \left(re^{\frac{4it}{p}},ir^{\frac{p}{2}}e^{2it},0,\ldots,0\right)$, for $r>0, r^p+r^2=1,\, 0\leq t\leq p\pi$,\\
$\gamma_j^+(t) = \Bigl(0,\ldots,0,\underbrace{e^{2it(1+\epsilon_j)}}_{2j},0,\ldots,0\Bigr)$, for $ 0\leq t\leq \frac{\pi}{1+\epsilon_j},\, j=1,\ldots,m$;\\
$\gamma_j^-(t) = \Bigl(0,\ldots,0,\underbrace{e^{2it(1-\epsilon_j)}}_{2j+1},0,\ldots,0\Bigr)$, for $ 0\leq t\leq \frac{\pi}{1-\epsilon_j},\, j=1,\ldots,m$\\
	and all their iterates, $\gamma_0^N$,  ${\gamma_j^+}^N$, ${\gamma_j^-}^N$, for all $N\geq1$.
	Their Conley-Zehnder index is given by
	{\small{\begin{align*}
		\mu_{CZ}\bigl(\gamma_0^N\bigr) &= 2Np(n-2)+4N;\\
		\mu_{CZ}\Bigl({\gamma_j^{\pm}}^N\Bigr) &= 2\left\lfloor\frac{2N}{p(1\pm\epsilon_j)}\right\rfloor + 2\left\lfloor\frac{N}{1\pm\epsilon_j}\right\rfloor + 2\sum_{\substack{k=1\\k\neq j}}^{m}\left(\left\lfloor\frac{N(1+\epsilon_k)}{1\pm\epsilon_j}\right\rfloor + \left\lfloor\frac{N(1-\epsilon_k)}{1\pm\epsilon_j}\right\rfloor\right) +n-1.
	\end{align*}}}
All indices have the same parity, thus applying Theorem \ref{thm:computing}, the $S^1$-equivariant positive symplectic homologies
are generated by the periodic orbits of the Reeb vector field graded by their Conley indices.
If $p_1\neq p_2$, those positive $S^1$-equivariant symplectic homologies are different as proven in \cite{U}.
\end{proof}

\noindent A more complete description of Brieskorn manifolds and computations of the symplectic homologies  can be found in the paper by Kwon and van Koert \cite{KvK}.

\begin{cor}[Ustilovsky, \cite{U}]
	For each natural number $m$, there exist infinitely many pairwise non isomorphic contact structures on $S^{4m+1}$. \end{cor}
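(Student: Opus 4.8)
The plan is to obtain the statement by combining the contact invariance of positive $S^1$-equivariant symplectic homology (Theorem \ref{prop:invtcontact}) with its explicit computation for the Brieskorn fillings (Proposition \ref{Brieskornfillings}); once those are in hand the corollary is almost immediate, so I will just indicate how the pieces fit together.

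First I would fix $m$, put $n=2m+1$, and for each integer $p\equiv\pm1\mod 8$ introduce the contact structure $\xi_p=\ker\alpha_p$ on the Brieskorn sphere $\Sigma(p,2,\ldots,2)$. By Brieskorn's theorem this manifold is diffeomorphic to $S^{4m+1}$, and, as recalled in Definition \ref{sect:Brieskorn}, the pair $(\Sigma(p,2,\ldots,2),\xi_p)$ is exactly fillable by a Liouville domain $W_p$. Next I would observe that the perturbed contact form $f\alpha_p$ from the proof of Proposition \ref{Brieskornfillings} still defines $\xi_p$ and, for generic irrational parameters $\epsilon_1,\ldots,\epsilon_m$ that are linearly independent over $\Q$, has only nondegenerate periodic Reeb orbits --- the orbits $\gamma_0^N$, ${\gamma_j^+}^N$, ${\gamma_j^-}^N$ --- all of whose Conley-Zehnder indices are even. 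In particular the set of these indices is lacunary and no orbit is bad, so the hypotheses of Theorem \ref{prop:invtcontact} are met, and by Theorem \ref{thm:computing} each $SH^{S^1,+}(W_p,\Q)$ is the free $\Q$-module on these orbits, graded by minus their Conley-Zehnder index.

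Then I would argue by contradiction: if $\xi_{p_1}$ and $\xi_{p_2}$ were contactomorphic for some distinct admissible $p_1,p_2$, Theorem \ref{prop:invtcontact} would yield an isomorphism of graded $\Q$-modules $SH^{S^1,+}(W_{p_1},\Q)\cong SH^{S^1,+}(W_{p_2},\Q)$, up to an even shift of the grading coming from the change of trivialisations. Equivalently, the two graded rank functions $k\mapsto\dim_\Q SH^{S^1,+}_k$ would coincide after a finite even translation of $k$. These rank functions are computed directly from the index formulas for $\gamma_0^N$ and ${\gamma_j^\pm}^N$ in Proposition \ref{Brieskornfillings}, and the content of \cite{U} is that they grow polynomially in $k$ with leading behaviour depending strictly monotonically on $p$; since a finite shift of the grading cannot change such asymptotics, the two cannot agree --- a contradiction. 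Hence $\xi_{p_1}$ and $\xi_{p_2}$ are not contactomorphic whenever $p_1\neq p_2$.

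Finally, since there are infinitely many integers $p\equiv\pm1\mod 8$, the family $\{\xi_p\}$ provides infinitely many pairwise non-isomorphic contact structures on $S^{4m+1}$. I expect the only genuine obstacle to be the last step --- the asymptotic analysis of \cite{U} separating the rank functions for different $p$ --- and, as a minor technical point, checking that the even-degree ambiguity in Theorem \ref{prop:invtcontact} is harmless precisely because it cannot affect those asymptotics.
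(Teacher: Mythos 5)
Your proposal follows essentially the same route as the paper: realize the $\xi_p$ on Brieskorn spheres with exact fillings, use the perturbed form whose Reeb orbits all have even Conley--Zehnder index so that Theorem \ref{thm:computing} computes $SH^{S^1,+}$ of the fillings, invoke Theorem \ref{prop:invtcontact} for contact invariance, and appeal to Ustilovsky's computation in \cite{U} to distinguish the resulting graded modules for $p_1\neq p_2$ (this is exactly Proposition \ref{Brieskornfillings} plus the corollary's two-line argument). Your extra remark that the even grading shift allowed by Theorem \ref{prop:invtcontact} cannot affect the invariant extracted from \cite{U} is a welcome precision, but it does not change the structure of the argument.
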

\begin{proof}
We see that one can build contact structures on $S^{4m+1}$, which  are exactly fillable, but which do not yield isomorphic  $SH^{S^1,+}$ homologies of the filling.
 The result then follows from Theorem \ref{prop:invtcontact}.
The contact structures in question are those defined by the Brieskorn spheres as above.
The fact that the homologies are different  follows from proposition \ref{Brieskornfillings}. 
\end{proof}


\section{On the minimal number of periodic Reeb orbits}\label{chap:orbits}

We now use the properties of positive $S^1$-equivariant symplectic homology to get results on the minimal
number of geometrically distinct periodic Reeb orbits on some contact manifolds.
\subsection{Minimal number of periodic Reeb orbits on a hypersurface in $\R^{2n}$}
  We use the transfer morphism to give an alternative  proof of a result by Ekeland and Lasry concerning the number of simple periodic Reeb orbits on a hypersurface in $\R^{2n}$,
pinched between two spheres, endowed with the restriction of the standard contact form on $\R^{2n}$.
\begin{theorem}[Ekeland, Lasry, \cite{EL,E}]\label{ekeland}
	Let $\Sigma$ be a contact type hypersurface in $\R^{2n}$.
	Let $\xi = \ker \alpha$ be the contact structure induced by the standard contact form on $\R^{2n}$.
	Assume there exists
	numbers $0<R_{1}\leq R_{2}$ such that:
	\begin{equation*}
		\forall x \in \Sigma , \quad R_{1}\leq \Vert x
			\Vert\leq R_{2} \qquad\textrm{with }
		\frac{R_{2}}{R_{1}} < \sqrt{2}
	\end{equation*}
	Assume also that $\forall x \in \Sigma, \quad T_{x}\Sigma \cap B_{R_{1}}(0) = \emptyset$.
	Assume moreover that all periodic Reeb orbits are non degenerate.
	Then $\Sigma$ carries at least $n$ geometrically distinct periodic Reeb orbits.
\end{theorem}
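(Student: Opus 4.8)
The plan is to realise $\Sigma$ as the boundary of a Liouville domain, sandwich it between two irrational symplectic ellipsoids, and then play off the functoriality of the transfer morphism on $SH^{S^1,+}$ against the action filtration. \textbf{First} I would record the geometry: the hypothesis $T_x\Sigma\cap B_{R_1}(0)=\emptyset$ says that the affine tangent plane at every $x\in\Sigma$ misses the origin, which is equivalent to saying that the radial Liouville field $X=\tfrac12\sum_i(x_i\partial_{x_i}+y_i\partial_{y_i})$ of the standard form $\lambda_0=\tfrac12\sum_i(x_i\,dy_i-y_i\,dx_i)$ is nowhere tangent to $\Sigma$ and points outward. Hence the bounded region $W$ enclosed by $\Sigma$ is a Liouville domain with $\partial W=\Sigma$ and $\lambda_0|_\Sigma=\alpha$, and $B_{R_1}\subseteq W\subseteq B_{R_2}$ are Liouville subdomains of $(\R^{2n},\lambda_0)$. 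I would then fix an irrational ellipsoid $E=E(a_1,\dots,a_n)$ with rationally independent semi-axes and $\pi R_2^2\le a_i<\pi R_2^2+\eta$, so that $B_{R_2}\subseteq E$, and set $E':=\mu E$ with $0<\mu<R_1/R_2$ small enough that $E'\subseteq B_{R_1}$. This gives Liouville embeddings $E'\subseteq W\subseteq E$, and since $R_2/R_1<\sqrt2$ one can fix a real number $T$ with $\pi R_2^2+\eta<T<2\pi R_1^2$ — the only place the pinching constant enters.

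\textbf{Next} I would use the transfer morphisms. By Theorem \ref{thm:fonctS1} the embeddings $E'\subseteq W\subseteq E$ yield a commuting diagram
\[
	SH^{S^1,+}(E)\xrightarrow{\ \phi_{E,W}\ }SH^{S^1,+}(W)\xrightarrow{\ \phi_{W,E'}\ }SH^{S^1,+}(E')
\]
whose composite is $\phi_{E,E'}$. Because $E'=\mu E$, in the common completion $\C^n$ the domain $E$ is obtained from $E'$ by flowing $\partial E'$ outward along $X$, i.e.\ by attaching a finite piece of the symplectisation of $(\partial E',\lambda_0|_{\partial E'})$; Lemmas \ref{lem:de^tlambda0} and \ref{lem:identitytransfer} then identify $\phi_{E,E'}$ with the natural isomorphism, so in particular $\phi_{E,W}$ is injective. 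Moreover each transfer morphism is assembled from increasing homotopies, along which the action is non-increasing (Proposition \ref{prop:actionS1bo}), so all three maps preserve the action filtration; and the action of a generator equals, up to an arbitrarily small error (Remark \ref{rem:action}), the period of the underlying Reeb orbit on the relevant boundary. Hence $\phi_{E,W}$ restricts to an injection $SH^{S^1,+}(E)^{\le T}\hookrightarrow SH^{S^1,+}(W)^{\le T}$.

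\textbf{Then} I would count. All closed Reeb orbits of $\partial E$ are non-degenerate and their Conley--Zehnder indices are all congruent to $n-1$ modulo $2$, hence form a lacunary set, so Theorem \ref{thm:computing} gives $SH^{S^1,+}(E;\Q)=\bigoplus_{\gamma}\Q\langle\gamma\rangle$, the sum over good Reeb orbits of $\partial E$. The $n$ simple Reeb orbits of $\partial E$, one in each coordinate plane, are good and have actions $a_1,\dots,a_n<T$, so $\operatorname{rk}SH^{S^1,+}(E)^{\le T}\ge n$, and therefore $\operatorname{rk}SH^{S^1,+}(W)^{\le T}\ge n$. By Corollary \ref{cor:complexSHS1+} and the corollary immediately following it, the positive $S^1$-equivariant complex of $W$ in action $\le T$ is generated by good Reeb orbits of $\Sigma$ of period $\lesssim T$, so $\Sigma$ carries at least $n$ such good orbits. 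Finally, it is classical (monotonicity of the first Ekeland--Hofer capacity applied to $B_{R_1}\subseteq W$) that every closed Reeb orbit on $\Sigma$ has action $\ge\pi R_1^2$; since $T<2\pi R_1^2$, any orbit of period $\le T$ is necessarily simple, and so the $\ge n$ good orbits just produced are simple and pairwise distinct, i.e.\ $\Sigma$ carries at least $n$ geometrically distinct periodic Reeb orbits.

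The hard part will be the structural input of the middle step: verifying that the composite transfer $\phi_{E,E'}$ really is an isomorphism (so that $\phi_{E,W}$ is forced to be injective), and that every transfer morphism in the $S^1$-equivariant theory is compatible with the action filtration, so that a generator of action $\le T$ downstairs has all its transfer images of action $\le T$. This, together with the action lower bound $\ge\pi R_1^2$ on $\Sigma$, is exactly what turns "$\ge n$ good Reeb orbits of small period" into "$\ge n$ geometrically distinct Reeb orbits", and it is the sole reason the constant $\sqrt 2$ appears.
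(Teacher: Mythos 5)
Your homological skeleton is essentially the paper's: pinch the domain bounded by $\Sigma$ between two irrational ellipsoids related by the Liouville flow, use Theorem \ref{thm:fonctS1} together with Lemmas \ref{lem:de^tlambda0} and \ref{lem:identitytransfer} to see that the composite transfer is the natural identification, truncate the action at a level $T<2\pi R_1^2$ (the only place $\sqrt2$ enters), compute the ellipsoid groups by Theorem \ref{thm:computing}, and invoke Corollary \ref{cor:complexSHS1+} to get at least $n$ classes of the form $u^0\otimes\gamma_{\M}$ in the truncated homology of $\widetilde{\Sigma}$. One wrinkle there: injectivity of $\phi_{E,W}$ on the full homology does not formally restrict to the action-truncated groups, because $SH^{S^1,+,\le T}$ is not a subgroup of $SH^{S^1,+}$ in general; you should either run the composite-isomorphism argument directly at level $\le T$ (as the paper does, using that the identification matches the $n$ low-action ellipsoid generators on both sides), or add that for the irrational ellipsoid the comparison map $SH^{S^1,+,\le T}(E)\to SH^{S^1,+}(E)$ is injective. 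That is easily repaired.

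The genuine gap is your last step, the claim that \emph{every} closed Reeb orbit on $\Sigma$ has action $\ge\pi R_1^2$. Monotonicity of the first Ekeland--Hofer capacity gives $c_1^{\mathrm{EH}}(W)\ge c_1^{\mathrm{EH}}(B_{R_1})=\pi R_1^2$, and the representation property says that $c_1^{\mathrm{EH}}(W)$ is the action of \emph{some} closed characteristic on $\Sigma$; it does not bound from below the action of every closed Reeb orbit. The identity of $c_1^{\mathrm{EH}}$ with the minimal action is known for convex bodies, but $W$ is not assumed convex here: the hypothesis $T_x\Sigma\cap B_{R_1}(0)=\emptyset$ is strictly weaker than convexity (and it says the affine tangent plane misses the whole ball $B_{R_1}$, not merely the origin, as your first paragraph suggests). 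Indeed your capacity argument uses only star-shapedness, so if it were valid the tangent-plane hypothesis would be superfluous; in the paper this hypothesis is used exactly and only at this point, rewritten as $\langle\nu_{\Sigma}(x),x\rangle>R_1$ (see \eqref{hyp:rmk}), to get the pointwise bound $\norm{R_{\alpha}}\le 2/R_1$, after which Cauchy--Schwarz and Wirtinger's inequality give $T\ge\pi R_1^2$ for every closed Reeb orbit. Without a bound valid for all orbits you cannot exclude that several of your $n$ classes live on iterates of a single orbit of small period, so simplicity and geometric distinctness — the actual content of the theorem — are not established. Replacing the capacity appeal by the Wirtinger estimate closes the gap and recovers the paper's proof.
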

\begin{remark}
	The assumption $\forall x \in \Sigma,\, T_{x}\Sigma \cap B_{R_{1}}(0) = \emptyset$ (which is weaker than convexity) can be stated as
	\begin{equation}\label{hyp:rmk}
		\langle \nu_{\Sigma}(z),z\rangle > R_{1}, \quad \forall z \in \Sigma
	\end{equation}
	where $\nu_{\Sigma}(z)$ is the exterior normal vector of $\Sigma$ at point $z$ and $\langle\cdot,\cdot\rangle$ denotes the Euclidean scalar product on $\R^{2n}$.
\end{remark}
\begin{proof}
We consider ellipsoids, very close to the spheres, \\
$S'_{R_1}=\left\{ \sum_{i=1}^{n} a_{i}^{-1}\bigl((x^{i})^2+ (y^{i})^2\bigr)=R_1^2\,\right\}$ and \\$S'_{R_2}=\{ \sum_{i=1}^{n} a_{i}^{-1}\bigl((x^{i})^2+ (y^{i})^2\bigr)=R_2^2\,\}$, with $a_1<\cdots<a_n$  real numbers arbitrarily close to $1$ and rationally independent,  and we denote by 
 $\widetilde{S'_{R_1}}$, $\widetilde{\Sigma}$ and $\widetilde{S'_{R_2}}$ the compact regions in $\R^{2n}$ bounded respectively by $S'_{R_1}$, $\Sigma$ and $S'_{R_2}$, endowed with the restriction of the standard symplectic form $\w$ on $\R^{2n}$.
 We take the parameters $a_i$ sufficiently close to $1$ so that  we have the inclusion
 $$
 \widetilde{S'_{R_1}} \subset \widetilde{\Sigma} \subset \widetilde{S'_{R_2}}
 $$
of Liouville domains.
The contact form on the boundaries is the one induced by $\iota(X_{rad})\w $, where $X_{rad}$ is the radial vector field $X_{rad}=\thalf\sum x^i\partial_{x^i}+y^i\partial_{y^i}$.
The completion of those Liouville domain is $(\R^{2n},\w)$.
	By Theorem \ref{thm:fonctS1}, the transfer morphisms yields the following commutative diagram:
	\begin{equation}
		\xymatrix{
			SH^{S^1,+}(\widetilde{S'_{R_{2}}},\w) \ar[r]^{\phi} \ar@/_1.5pc/[rr]^\cong & SH^{S^1,+}(\widetilde{\Sigma}, \w) \ar[r] & SH^{S^1,+}(\widetilde{S'_{R_{1}}},\w).
		}
	\end{equation}
         We can consider the positive $S^1$-equivariant symplectic homology truncated by the action at level $\leq T$, $SH^{S^1,+,T}$.
         Since all Floer trajectories inducing the morphisms lower the action, we still have the commutative diagram for the truncated positive invariant symplectic homology:
	\begin{equation}\label{diagcommutatifHCT}
		\xymatrix{
			SH^{S^1,+,T}(\widetilde{S'_{R_{2}}},\w) \ar[r]^{\phi} \ar@/_1.5pc/[rr]^\cong & SH^{S^1,+,T}(\widetilde{\Sigma}, \w) \ar[r] & SH^{S^1,+,T}(\widetilde{S'_{R_{1}}},\w).
		}
	\end{equation}
	where we have chosen a number $T$ such that
	\begin{equation}
		\pi a_{n} R_{2}^{2} < T < 2\pi a_{1} R_{1}^{2}.
	\end{equation}
	This is possible thanks to  the ``pinching'' hypothesis $\frac{R_{2}}{R_{1}}<\sqrt{2}$.\\
	
	\noindent By Theorem \ref{thm:computing}, $SH^{S^1,+,T}(\widetilde{S'_{R_{2}}},\w)$ is generated by $n$ elements $$u^0\otimes\gamma_{\M}^1,\ldots u^0\otimes\gamma_{\M}^n$$ corresponding to
	$n$ simple periodic Reeb orbits on $S'_{R_2}$, $\gamma^1,\ldots,\gamma^n$, of action $\pi a_{1}R_{2}^{2},\ldots, \pi a_{n}R_{2}^{2}$.
	The analogous is true for $SH^{S^1,+,T}(\widetilde{S'_{R_{1}}},\w)$ with actions $\pi a_{1}R_{1}^{2},\ldots, \pi a_{n}R_{1}^{2}$.
	
	By \eqref{diagcommutatifHCT}, $SH^{S^1,+,T}(\widetilde{\Sigma}, \w)$ is thus of rank at least $n$.
	All applications in the above diagrams decrease the action thus the action of each of those $n$ generators in $SH^{S^1,+,T}(\widetilde{\Sigma}, \w) \cap \im(\phi)$ is pinched between
	$\pi a_{1} R_{1}^{2}$ and $\pi a_{n}R_{2}^{2} < 2\pi a_{1} R_{1}^{2}$.
	
	By Corollary \ref{cor:complexSHS1+}, the only generators that may appear in  $SH^{S^1,+,T}(\widetilde{\Sigma}, \w)$ are elements of the form $u^0\otimes\gamma_{\M}$ with $\gamma$ a good Reeb orbit on $\Sigma$.
	
	It remains  to prove that the $n$ elements in the image of $\phi$ are geometrically distinct.
	By the pinching condition on their action, we know that they are not iterate one from another but we still need to prove that two of them can not be the  iterates of a same  orbit of smaller action.
	This we do by proving that the smallest possible action for any periodic Reeb orbit on $\Sigma$ is greater than $\pi a_{1} R_{1}^{2}$.
	Let $\gamma : [0,T] \rightarrow \Sigma$ be a simple periodic Reeb orbit.
	We have :
	\begin{align}
		2T & = \int_0^T \alpha_{\gamma(t)}\bigl(\dot{\gamma}(t)\bigr)dt  
	            = \int_{0}^{T} \langle \dot{\gamma}(t), J\gamma(t)\rangle dt \qquad \textrm{since } \alpha_{x}(X_{x}) = \tfrac{1}{2} \langle X_{x}, Jx \rangle\nonumber\\
		& =  \int_{0}^{T} \langle \dot{\gamma}(t), J\bar{\gamma}(t)\rangle dt \qquad \textrm{with } \bar{\gamma}(t) := \gamma(t) -\frac{1}{T} \int_{0}^{T} \gamma(t) dt\nonumber\\
		 &\leq  \| \dot{\gamma}\|_{L^{2}}  \| \bar{\gamma}\|_{L^{2}} \nonumber\\
		& \leq   \| \dot{\gamma}\|_{L^{2}}^{2} \tfrac{T}{2\pi} \qquad \textrm{via the Wirtinger's inequality}\nonumber\\
		& =  \frac{T}{2\pi}\int_{0}^{T} \|\dot{\gamma}(t)\|^{2} dt
		=  \frac{T}{2\pi}\int_{0}^{T} \|(R_{\alpha})_{\gamma(t)}\|^{2} dt \label{borneaction}
	\end{align}
	For any point $x$ in $\Sigma$, the norm  of the Reeb vector field is bounded by $\|(R_{\alpha})_{x}\| \leq \frac{2}{R_{1}}$. 
	Indeed,  $R_{\alpha}$ is proportional to $J\nu_{\Sigma}$ since $\iota(J\nu_{\Sigma})d\alpha=0$ because $\iota(J\nu_{\Sigma})d\alpha(Y)=\w(J\nu_{\Sigma},Y)=-\langle \nu_{\Sigma},Y\rangle=0$ for all $Y\in T\Sigma$.
	Thus $R_{\alpha}=cJ\nu_{\Sigma}$ with $|c|=\|R_{\alpha}\|$.
	But $\alpha_x({R_{\alpha}}_x)=1=\thalf\langle c_xJ\nu_{\Sigma}(x),Jx\rangle=\frac{c_x}{2}\langle\nu_{\Sigma}(x),x\rangle$.
	Therefore, by assumption \eqref{hyp:rmk}, $c_x=\frac{2}{\langle\nu_{\Sigma}(x),x\rangle}\leq \frac{2}{R_{1}}$.
	And thus \eqref{borneaction} $\leq \frac{4}{R_{1}^{2}} T \frac{T}{2\pi}$.
	Then $2T \leq 2T \frac{T}{\pi R_{1}^{2}}$ and we reach the conclusion
	\[
		T \geq \pi R_{1}^{2}.
	\]
	Hence the conclusion of the Theorem.
\end{proof}

\noindent We have to assume here that all periodic Reeb orbits are non degenerate; this hypothesis is not needed in the original proof.
The original proof of Theorem \ref{ekeland} uses variational methods that work only in $\R^{2n}$.

\subsection{Reeb orbits on hypersurfaces in negative line bundles}
Our framework here is   a complex line bundle $\mathcal{L}\stackrel{\pi}{\rightarrow} B^{2n}$ over   a closed symplectic manifold $(B^{2n},\w_B)$,
endowed with a Hermitian structure $h$ and a connection $\nabla$.
We assume $\mathcal{L}$ to be  \emph{negative}  i.e. $
	c_1(\mathcal{L}) = -\kappa[\w_B]$
for a real number $\kappa>0$.
The  \emph{transgression 1-form}, $\theta^{\nabla} \in \Omega^{1}\bigl(\mathcal{L}\setminus O_{\mathcal{L}}, \R\bigr)$ is 
defined by
\begin{equation}
	\left\{\begin{array}{rcl}
		\theta^{\nabla}_u(u) & = & 0 ,\quad  \theta^{\nabla}_u(iu) = \frac{1}{2\pi} \qquad u\in \mathcal{L}\setminus O_{\mathcal{L}} \\
		\theta^{\nabla}\vert_{H^{\nabla}} & \equiv & 0 \ \ \textrm{where } H^{\nabla} \textrm{ is the horizontal distribution}.
	\end{array}\right.
\end{equation}
We have 
\[
	d\theta^{\nabla} = \kappa\pi^{\star}\w_B.
\]
We denote by $r:{\mathcal{L}} \rightarrow \R : u\mapsto h_{\pi(u)}(u,u)^{\half}=:\vert u\vert$ the radial function on the fiber.
Observe that $d(r^2\theta^{\nabla})$ is symplectic except on the zero section $O_{\mathcal{L}}$.
We want to have information about the minimal number of periodic orbits of the Reeb vector field on a hypersurface
 in $\mathcal{L}\setminus O_{\mathcal{L}}$ endowed with the contact form defined by the restriction of $(r^2\theta^{\nabla})$ to $\Sigma$.

\begin{proof}[ of Proposition \ref{propintro:orb}]
We start by determining Reeb orbits on the circle bundle with varying radius.
  Let $f : B \rightarrow \R$ be  a smooth function.
Define the contact hypersurface
$$
\Bigl(S_{e^f}= \{ u\in \mathcal{L} \, \vert \, \vert u \vert = e^{ f(\pi(u))} \},\alpha:=(r^2\theta^{\nabla})_{\vert_{S_{e^f}}}\Bigr).$$ 
The Reeb vector field on $S_{e^f}$ is given by:
\begin{equation}
	R_{{\alpha}} =e^{- 2f(\pi(u))} \Bigl(2\pi \partial_\theta+ \frac{2}{\kappa} \bar{X}_{f}\Bigr)
\end{equation}
where $ \partial_\theta$ is the infinitesimal rotation in the fiber ($ \partial_\theta$ at the point $u$ identifies with ${iu}$),  where $X_f$ is the Hamiltonian vector field on $B$ corresponding to the function $f$ (i.e.
$\iota({X}_{f})\w_B = df $) and where $\bar{X}$ denotes the horizontal lift of a vector $X\in TB$.
Periodic Reeb  orbits  correspond to the critical points of $f$.
Thus, using Morse's inequalities, we have:
	if $\Sigma$ is a contact type hypersurface in $\mathcal{L}$ such that the intersection of $\Sigma$ with each fiber is a circle,
	and if the contact form is the restriction of $r^2\theta^{\nabla}$,
	then $\Sigma$ carries at least $\sum_{i=0}^{2n} \beta_{i}$ geometrically distinct periodic Reeb orbits, where  $\beta_{i}$ denote the Betti numbers of $B$.
\end{proof}
	
We are now ready  to prove theorem \ref{thmintro:orb} for 
 a contact type hypersurface $\Sigma$ in a negative line bundle $\mathcal{L}$ over a symplectic manifold $B$, 
 when it is ``pinched'' between two circle bundles $S_{R_1}$ and $S_{R_2}$ of radii $R_1$ and $R_2$ such that $0<R_1<R_2$ and $\frac{R_2}{R_1}<\sqrt{2}$,
 and when there exists a Liouville domain $W'$ (such that its first Chern class vanishes on all tori)
whose boundary coincides with the circle bundle $S_{R_1}$. We endow $\Sigma$ with the contact form $\alpha$ induced by $r^2\theta^\nabla$ on  $\mathcal{L}$.
We assume that  the minimal action of any periodic Reeb orbit on $\Sigma$ is  bounded below by $R_1^2$.
We also assume that there exists a Morse function $f:B\rightarrow\R$ such that the set of indices of all critical points of $f$ is lacunary.
We want to prove that $\Sigma$ carries at least $\sum_{i=0}^{2n} \beta_{i}$ geometrically distinct periodic Reeb orbits.\\
\begin{proof}[ of theorem \ref{thmintro:orb}]
The proof is the same as for Theorem \ref{ekeland} using transfer morphisms for Liouville domains. 
We see the hypersurfaces as lying in the completion of the Liouville domain $W'$ which we assumed to exist.
We find a small $\epsilon$ so that the convex domain ${\widetilde{\Sigma}}$ bounded  by the hypersurface  $\Sigma$ 
is such that
$$
{\widetilde{S}}_{R_1e^{\epsilon f}}\subset {\widetilde{\Sigma}}  \subset {\widetilde{S}}_{R_2e^{\epsilon f}}\subset \widehat{W'}
$$ 
where ${\widetilde{S_{f}}}$ is the domain bounded by $S_f$. We can compute the positive $S^1$-equivariant symplectic homology, which is spanned by  periodic orbits of the Reeb vector field by Theorem \ref{thm:computing}.
This is possible by the pinching condition. One uses then the transfer morphisms with truncated action. We have seen that there are $\sum_{i=0}^{2n} \beta_{i}$ simple  periodic orbits on ${{S}}_{R_1e^{\epsilon f}}$ whose actions are  very close to  $R_1^2$ and 
the same number of  simple  periodic orbits on ${{S}}_{R_2e^{\epsilon f}}$ whose actions are  very close to  $R_2^2$. The transfer morphism imply the existence of at least $\sum_{i=0}^{2n} \beta_{i}$  periodic orbits on 
$\Sigma$ with action between $R_1^2$ et $R_2^2$. Since we have assumed here that the minimal action of any periodic Reeb orbit on $\Sigma$ is bounded below by $R_1^2$, those orbits are geometrically distinct.
\end{proof}

\noindent In this Theorem, the assumption on the existence of a Morse function all of whose critical points have Morse indices of the same parity is of a technical nature.
Its purpose is to bring the situation within the scope of Theorem \ref{thm:computing}, which is our tool for computing the positive $S^1$-equivariant symplectic homology.
The lower bound on the period of any periodic Reeb orbit is semi-technical; 
it is now the only way we have to distinguish the images of the orbits.
The ``pinching'' assumption is more conceptual, its main implication is that the ``$n$ first generators'' of the positive $S^1$-equivariant symplectic homology are simple orbits.
\begin{exemple}[Tautological complex line bundle over $\C P^{n-1}$]~\\
We consider the tautological complex (negative) line bundle over $\C P^{n-1}$,
$\mathcal{O}(-1)\longrightarrow \C P^{n-1}$.  The corresponding disk bundle,
which is the canonical disk bundle over $\C P^{n-1}$, is canonically isomorphic to
the ball blown up at the origin,
$\hat{B}^{2n} := \Bigl\{ \bigl(z,[t]\bigr) \in \C^n \times \C P^{n-1}\, \vert \,z\in [t] \quad \vert z\vert \le 1\Bigr\}$.
Its boundary is the sphere, which  is the boundary of the ball  in $\C^{n}\simeq\R^{2n}$ which is a Liouville domain.
The basis $B= \C P^{n-1}$ admits  the Morse function $f_{n-1}$ defined earlier, whose critical points have even Morse indices.
\end{exemple}
\begin{exemple}[Tautological complex line bundle over the Grassmannian  $G^+_{2,n}$]~\\
We consider the tautological complex negative  line bundle over the Grassmannian of oriented $2$-planes in $\R^n$ (an oriented real $2$-plane being considered as a complex
$1$-dimensional space).
 The circle bundle, which is the boundary of the corresponding  disk bundle,
 is canonically isomorphic to the unit sphere bundle in the cotangent bundle $T^*S^{n-1}$ to the sphere $S^{n-1}$; indeed
 any element $z$ above an oriented $2$-plane $\pi$ in this circle bundle represents two oriented orthonormal vectors
 $u,v$ in $\R^n$ spanning $\pi$; this can be viewed as an element $u$ in  $S^{n-1}$ and an element $v$ in $T^*_u(S^{n-1})$.
  This unit bundle in the cotangent bundle $T^*S^{n-1}$
 is also the boundary of the Liouville domain defined by all cotangent vectors of length at most $1$ in $T^*S^{n-1}$.
The basis of our tautological complex line bundle is the Grassmannian  $G^+_{2,n}$; when $n$ is even, it admits  the Morse function \cite{Zhou}
$$
f(\pi)=c_1(u^1v^2-u^2v^1)+c_2(u^3v^4-u^4v^3)+\ldots +c_n(u^{2n-1}v^{2n}-u^{2n}v^{2n-1})
$$
with $c_1>c_2>\ldots >c_n>0$, where the oriented $2$-plane $\pi$ is generated by the oriented orthonormal vectors $u=\sum_{i=1}^{2n}u^ie_i$ and $v\sum_{j=1}^{2n}v^je_i$ in $\R^{2n}$.
The critical points of $f$ are the planes spanned by the oriented basis $\{\,e_1,e_2\,\}$, $\{\,e_3,e_4\,\} \ldots \{\,e_{2n-1},e_{2n}\,\}$
and $\{\,e_{2n},e_{2n-1}\,\} ,\ldots \{\,e_4,e_3\,\} , \{\,e_2,e_1\,\} $; they have  Morse indices  $0,2,\ldots,2n-2, 2n-2,\ldots,4n-6, 4n-4$
respectively. These are all even integers.
\end{exemple}

\bibliographystyle{abbrv}
\bibliography{Revision.bib}



\end{document}